\newcommand{\C}{\mathbb{C}}
\newcommand{\Z}{\mathbb{Z}}
\newcommand{\N}{\mathbb{N}}
\newcommand{\wt}{\mathrm{wt}}
\newcommand{\fwt}{\mathrm{wt}^\mathrm{fus}}
\newcommand{\ufwt}{\mathrm{wt}^\mathrm{unfus}}
\newcommand{\numrows}{r}
\newcommand{\numcols}{N}
\newcommand{\lam}{\lambda}
\renewcommand{\v}{\mathbf{v}}
\newcommand{\z}{\mathbf{z}}
\renewcommand{\c}{\mathbf{c}}
\renewcommand{\d}{\mathbf{d}}
\newcommand{\longelt}{w_0}
\newcommand{\wi}[1]{w_{#1}}
\newcommand{\wc}{w_c}
\newcommand{\wdd}{w_d}
\renewcommand{\S}{\mathcal{S}}
\newcommand{\Si}[1]{\mathfrak{S}_{#1}}
\newcommand{\W}{S_r}
\newcommand{\Zbar}{\tilde{Z}}
\newtheorem{theorem}{Theorem}[section]
\newtheorem*{theorem*}{Theorem}
\newtheorem{lemma}[theorem]{Lemma}
\newtheorem{definition}[theorem]{Definition}
\newtheorem{proposition}[theorem]{Proposition}
\newtheorem{corollary}[theorem]{Corollary}
\definecolor{cb-white}{RGB}{255,255,255}
\definecolor{cb-light-yellow}{RGB}{238,204,102}
\definecolor{cb-light-red}{RGB}{238,153,170}
\definecolor{cb-light-blue}{RGB}{102,153,204}
\definecolor{cb-dark-yellow}{RGB}{153,119,0}
\definecolor{cb-dark-red}{RGB}{153,68,85}
\definecolor{cb-dark-blue}{RGB}{0,68,136}
\definecolor{cb-blue-1}{RGB}{0,68,136}
\definecolor{cb-purple-1}{RGB}{170,51,119}
\definecolor{cb-cyan-1}{RGB}{102,204,238}
\definecolor{cb-green-1}{RGB}{34,136,51}
\definecolor{cb-yellow-1}{RGB}{204,187,68}
\definecolor{cb-red-1}{RGB}{238,102,119}
\definecolor{cb-gray-1}{RGB}{187,187,187}
\newcommand{\rvertexdolcrosstwo}{\tikz[anchor=base, baseline=-2pt, scale=1]{		
		\draw[cb-dark-blue, line width=.75mm, fill=white, dash pattern=on 4pt off 3pt] (.7,.7)--(0,0);
		\draw[cb-dark-red, line width=.75mm, fill=white, dash pattern=on 4pt off 3pt] (-.7,.7)--(0,0);
		\draw[cb-dark-blue, line width=.75mm, fill=white, dash pattern=on 4pt off 3pt] (0,0)--(-.7,-.7);
		\draw[cb-dark-red, line width=.75mm, fill=white, dash pattern=on 4pt off 3pt] (0,0)--(.7,-.7);
		
		\draw[cb-dark-blue, fill=cb-dark-blue, line width=.75mm] (-.7,-.7) circle (.1);
		\draw[cb-dark-blue, fill=cb-dark-blue, line width=.75mm] (.7,.7) circle (.1);
		\draw[cb-dark-red, fill=cb-dark-red, line width=.75mm] (-.7,.7) circle (.1);
		\draw[cb-dark-red, fill=cb-dark-red, line width=.75mm] (.7,-.7) circle (.1);

        \path[fill=white] (0,0) circle (.3);
        \node at (0,0) {$\footnotesize z_{i},z_{i+1}$};
	}\xspace}
\newcommand{\rvertexcolsametwo}{\tikz[anchor=base, baseline=-2pt, scale=1]{
		\draw[cb-light-red, line width=.75mm, fill=white, line cap=round] (.7,.7)--(0,0);
		\draw[cb-light-red, line width=.75mm, fill=white, line cap=round] (-.7,.7)--(0,0);
		\draw[cb-light-blue, line width=.75mm, fill=white, line cap=round] (0,0)--(-.7,-.7);
		\draw[cb-light-blue, line width=.75mm, fill=white, line cap=round] (0,0)--(.7,-.7);
		
		\draw[cb-light-blue, fill=cb-light-blue, line width=.75mm] (-.7,-.7) circle (.1);
		\draw[cb-light-red, fill=cb-light-red, line width=.75mm] (.7,.7) circle (.1);
		\draw[cb-light-red, fill=cb-light-red, line width=.75mm] (-.7,.7) circle (.1);
		\draw[cb-light-blue, fill=cb-light-blue, line width=.75mm] (.7,-.7) circle (.1);

        \path[fill=white] (0,0) circle (.3);
        \node at (0,0) {$\footnotesize z_i,z_{i+1}$};
		}\xspace}
\newcommand{\rvertexcolcrosstwo}{\tikz[anchor=base, baseline=-2pt, scale=1]{		
		\draw[cb-light-red, line width=.75mm, fill=white, line cap=round] (.7,.7)--(0,0);
		\draw[cb-light-blue, line width=.75mm, fill=white, line cap=round] (-.7,.7)--(0,0);
		\draw[cb-light-red, line width=.75mm, fill=white, line cap=round] (0,0)--(-.7,-.7);
		\draw[cb-light-blue, line width=.75mm, fill=white, line cap=round] (0,0)--(.7,-.7);
		
		\draw[cb-light-red, fill=cb-light-red, line width=.75mm] (-.7,-.7) circle (.1);
		\draw[cb-light-red, fill=cb-light-red, line width=.75mm] (.7,.7) circle (.1);
		\draw[cb-light-blue, fill=cb-light-blue, line width=.75mm] (-.7,.7) circle (.1);
		\draw[cb-light-blue, fill=cb-light-blue, line width=.75mm] (.7,-.7) circle (.1);

        \path[fill=white] (0,0) circle (.3);
        \node at (0,0) {$\footnotesize z_i,z_{i+1}$};
	}\xspace}
\newcommand{\rvertexdolsametwo}{\tikz[anchor=base, baseline=-2pt, scale=1]{		
		\draw[cb-dark-red, line width=.75mm, fill=white, dash pattern=on 4pt off 3pt] (.7,.7)--(0,0);
		\draw[cb-dark-red, line width=.75mm, fill=white, dash pattern=on 4pt off 3pt] (-.7,.7)--(0,0);
		\draw[cb-dark-blue, line width=.75mm, fill=white, dash pattern=on 4pt off 3pt] (0,0)--(-.7,-.7);
		\draw[cb-dark-blue, line width=.75mm, fill=white, dash pattern=on 4pt off 3pt] (0,0)--(.7,-.7);
		
		\draw[cb-dark-blue, fill=cb-dark-blue, line width=.75mm] (-.7,-.7) circle (.1);
		\draw[cb-dark-red, fill=cb-dark-red, line width=.75mm] (.7,.7) circle (.1);
		\draw[cb-dark-red, fill=cb-dark-red, line width=.75mm] (-.7,.7) circle (.1);
		\draw[cb-dark-blue, fill=cb-dark-blue, line width=.75mm] (.7,-.7) circle (.1);

        \path[fill=white] (0,0) circle (.3);
        \node at (0,0) {$\footnotesize z_{i},z_{i+1}$};
	}\xspace}
\newcommand{\gamlittle}{\tikz[baseline=-2pt, scale=1]{		
		\draw[line width=.25mm] (1,0)--(-1,0);
		\draw[line width=.25mm] (0,1)--(0,-1);

        \path[fill=blue!25!white] (-1,0) circle (.3);
        \path[fill=blue!25!white] (1,0) circle (.3);
        \path[fill=blue!25!white] (0,-1) circle (.3);
        \path[fill=blue!25!white] (0,1) circle (.3);
		
		\node at (-1,0) {$a$};
		\node at (0,1) {$b$};
		\node at (1,0) {$c$};
		\node at (0,-1) {$d$};
	}\xspace}
\newcommand{\gambig}{\tikz[baseline=-2pt, scale=1]{		
		\draw[line width=.25mm] (1,0)--(-1,0);
		\draw[line width=.25mm] (0,1)--(0,-1);
		
        \path[fill=blue!25!white] (-1,0) circle (.3);
        \path[fill=blue!25!white] (1,0) circle (.3);
        \path[fill=blue!25!white] (0,-1) circle (.3);
        \path[fill=blue!25!white] (0,1) circle (.3);
		
		\node at (-1,0) {$A$};
		\node at (0,1) {$B$};
		\node at (1,0) {$C$};
		\node at (0,-1) {$D$};
	}\xspace}
\begin{document}
\begin{abstract}
    Recent papers in solvable lattice models emphasize models where states can be visualized as colored paths through the lattice. We define a bosonic model in which there are two types of colors, one whose paths move down and to the right, the other whose paths move down and to the left. 
    Depending on their boundary data, systems may have no states, exactly one state, or many states. We prove that these cases depend on a criterion involving two permutations extracted from the boundary data and their Bruhat order. This classification also helps us to characterize the partition functions of our systems, a question at the heart of the study of solvable lattice models. Using the solvability of the model, we derive a four-term recurrence relation on the partition function. Together with the classification of systems by number of states which serves as a base case for the recursion, the recursion completely characterizes the partition function of systems. We also show a color merging property relating the bicolored bosonic models to colored and uncolored bosonic models, and correspondence with Gelfand-Tsetlin patterns.
\end{abstract}

\title{Bicolored bosonic solvable lattice models}
\author[T. Blum]{Talia Blum}
\address{T. Blum,
Stanford University,
Department of Mathematics,
Stanford, CA 94305}
\email{taliab@stanford.edu}

\maketitle

\setcounter{tocdepth}{1}
\tableofcontents
\hypersetup{linktocpage}

\section{Introduction}
In solvable lattice models, states can often be described as sets of paths through a lattice. Since Borodin and Wheeler \cite{BW}, these are often described as \textit{colored} paths. The models in this paper are unusual because there are two sets of colors giving two sets of paths that interact in a coordinated way.

We will define a bicolored bosonic model lattice model, which differs from previously studied colored models in that it includes two types of bosonic colors which each behave differently. This construction is reminiscent of the supersymmetric models studied by Brubaker, Buciumas, Bump, and Friedberg in \cite{BBBG2}, with a key distinction that their models are fermionic, rather than bosonic.

We consider a lattice model constructed on a square grid. The edges are decorated with spins among two types: we call them \textit{colors} and \textit{dolors} (dual colors). A \textit{state} of the model is an admissible assignment of spins to every edge. An example of an admissible state of the model is shown in Figure~\ref{fig:state_example_intro}. To intuitively describe an admissible state, we can think of the colors and dolors as particles tracing out paths through the grid. Colors trace out down-right paths, while dolors travel down and to the left. On vertical edges, particles must travel in pairs, each pair consisting of one color and one dolor. On horizontal edges the particles travel alone. 
The model is \textit{bosonic} in the sense that many pairs of particles may travel along the same vertical edge. In admissible states of the model, the colors trace out paths travelling down and to the right, and dolors trace out paths travelling down and to the left. To each state we may also associate a \textit{Boltzmann weight}.

\begin{figure}[ht!]
    \centering
    \input{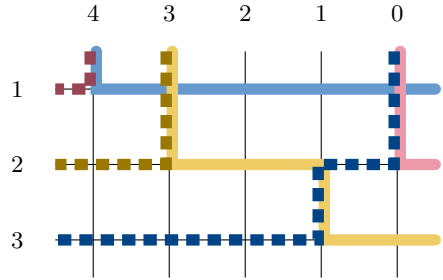}
    \caption{An example of a state of the (fused) model. Colors are represented by solid lines, which form paths travelling down and to the right. Dolors are represented by dashed lines, which form paths travelling down and to the left. On vertical edges, pairs of a color and dolor must travel together, while on horizontal edges exactly one particle occupies an edge.}
    \label{fig:state_example_intro}
\end{figure}

A \textit{system} is a collection of states with the same boundary conditions. The \textit{partition function} the sum of the Boltzmann weights of the states in a system. The partition function is a central object in the study of solvable lattice models, as it is often a rich source of connections with other areas of mathematics. Partition functions of certain colored lattice models are symmetric functions \cite{BW} and matrix coefficients of $p$-adic representations \cite{BBBG, BN}. 

A main result of this paper is the following result on the partition function of systems of the bicolored bosonic model.

\begin{theorem}\label{thm:intro_recurrence_determines_pt}
    The partition function of a system of the bicolored bosonic model satisfies a four-term recurrence relation.
    Together with characterization of the partition functions of systems with exactly one state and systems with no states, the recurrence completely determines the partition function.
\end{theorem}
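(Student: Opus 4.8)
The plan is to extract the recurrence from the solvability of the model via a \emph{train argument}. The mechanism is the Yang--Baxter equation, which furnishes an $R$-matrix intertwining the row-transfer operators for two adjacent rows carrying spectral parameters $z_i$ and $z_{i+1}$. First I would attach an $R$-vertex to one side of a pair of adjacent rows and drag it across the entire width of the lattice; legality at each interior crossing is exactly the Yang--Baxter equation, so the $R$-vertex emerges on the opposite boundary. This produces an identity equating the partition function in which the $R$-vertex sits on the left boundary to the one in which it sits on the right boundary. The four admissible configurations of the $R$-vertex --- colors passing straight or crossing, and dolors passing straight or crossing --- are the natural source of the four terms: summing over the internal spins of the $R$-vertex on each side and reading off which configurations are compatible with the fixed boundary data produces a linear relation among four partition functions, each carrying an explicit Boltzmann coefficient that is a rational function of $z_i$ and $z_{i+1}$.

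Once the identity is in hand, I would \emph{orient} it into a genuine recurrence by isolating the partition function of the original system on one side and exhibiting the other three terms as partition functions of systems with modified boundary data. The key bookkeeping step is to track how transposing $z_i$ and $z_{i+1}$, together with the crossing and non-crossing choices of the $R$-vertex, acts on the pair of boundary permutations $\wc$ and $\wdd$ from which the classification is built. I expect that each contributing term corresponds to moving $\wc$ or $\wdd$ by one step in Bruhat order, so that the three systems on the right-hand side are all strictly simpler than the system on the left with respect to the statistic $\ell(\wdd)-\ell(\wc)$ (equivalently, the number of inversions separating the two permutations in $\W$).

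The base cases are supplied directly by the earlier classification of systems according to $\wc$, $\wdd$, and their Bruhat order: a system with no states has partition function $0$, and a system with exactly one state has partition function equal to the single, explicitly computable Boltzmann weight of that state. With these as initial data, I would close the argument by induction on the complexity statistic above: the recurrence expresses the partition function of any system not already a base case as a combination of partition functions of systems of strictly smaller statistic, and after finitely many steps every branch terminates at a no-state or one-state system. Uniqueness is then immediate, since the recurrence together with the base values determines each partition function step by step.

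The main obstacle will be the well-foundedness and orientability of the recursion --- namely, showing that the four-term identity can always be arranged to express the target partition function in terms of strictly simpler systems, never reproducing a system of equal or greater complexity that would cause the recursion to stall or cycle. This hinges on the precise compatibility between the spectral-parameter transposition effected by the $R$-matrix and the Bruhat-order data governing the number of states: one must verify that swapping adjacent rows moves the boundary permutations exactly one step in Bruhat order, and that the coefficient multiplying the target term never vanishes identically, so that dividing to isolate it is legitimate. Establishing this interlock between the analytic input (the Yang--Baxter equation and its explicit $R$-matrix weights) and the combinatorial input (the $\wc,\wdd$ classification) is where the substance of the proof lies.
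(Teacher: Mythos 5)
Your derivation of the four-term identity is exactly the paper's: attach an $R$-vertex at rows $i,i+1$, slide it across the lattice with the Yang--Baxter equation (this is Lemma~\ref{lem:sir}), and expand the two admissible $R$-vertex configurations on each boundary --- dolors crossing or not on the left, colors crossing or not on the right --- to obtain \eqref{eq:recurrence}; the base cases (partition function $0$ for stateless systems, the explicit weight of Proposition~\ref{prop:mono_pt_function} for monostatic ones) also coincide. The gaps are in how you orient and ground the induction. First, you cannot ``isolate the partition function of the original system'': its pair $(\wdd,\wc)$ occurs in \emph{two} of the four terms, as $Z(\wi{1},\wi{2};\z)$ and $Z(\wi{1},\wi{2};s_i\z)$, so knowing the other two terms only yields a functional equation $a(\z)f(\z)-b(\z)f(s_i\z)=(\text{known})$, which does not determine $f$: the homogeneous equation has nonzero polynomial solutions (any function symmetric in $z_i,z_{i+1}$ when $a=b=(1-t)z_i$, and $z_{i+1}$ times such a function in the mixed cases). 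One must instead solve for one of the singly occurring terms, with pair $(\wdd s_i,\wc)$ or $(\wdd,\wc s_i)$, whose coefficients $(z_i-z_{i+1})$ or $t(z_i-z_{i+1})$ are nonzero; consequently the system you compute at each step is \emph{not} the one the recurrence was based at, and the induction must be organized around that re-indexing.

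Second, your complexity statistic $\ell(\wdd)-\ell(\wc)$ is not strictly monotone along the recursion, which is exactly the obstacle you flagged but did not resolve. Concretely, to compute a pair $(u,v)$ one must realize it as the singly occurring term of the recurrence based at $(u,vs_i)$ (or at $(us_i,v)$), and in either case the prerequisite pairs include $(us_i,vs_i)$; when $s_i$ is a right descent of both $u$ and $v$ this pair has $\ell(us_i)-\ell(vs_i)=\ell(u)-\ell(v)$, the same statistic as the target (e.g.\ $(u,v)=(s_1,s_2s_1)$ forces $s_i=s_1$ and prerequisite $(e,s_2)$, of equal statistic), so an induction ordered by your statistic does not close. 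The paper's resolution (Lemma~\ref{lem:recurrence_bruhat}) is to induct on $\ell(\wc)$ alone: choose $s_i$ a right descent of $v$ and base the recurrence at $(u,vs_i)$; then $(u,v)$ occurs exactly once, as the term $Z(s_i\wi{1},\wi{2};s_i\z)$, while the remaining terms carry the pairs $(u,vs_i)$ (twice) and $(us_i,vs_i)$, both with second coordinate of length $\ell(v)-1$. The induction grounds out at $\wc=\mathrm{id}$, where every $\wdd$ satisfies $\wdd\not<\wc$ and the classification applies. With this reordering your outline becomes the paper's proof; as written, the recursion neither isolates the correct unknown nor provably terminates.
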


This result is a powerful characterization of the partition function. There are two important underlying results, which we will now discuss at greater length.

\begin{enumerate}
    \item The four-term recurrence relation on the partition function.
    \item Knowledge of systems with exactly one state and with no states.
\end{enumerate}

The first of these is the recurrence relation on the partition function. For this result, we use that the model is \textit{solvable}. Solvable models satisfy a remarkable relation called the \textit{Yang-Baxter equation}. This tool is often a key to studying the partition function. Indeed, we will use the Yang-Baxter equation to show that the  to show that the partition function of the bicolored bosonic model satisfies the aforementioned four-term recursion. In \cite{BN}, the Yang-Baxter equation is similarly used to show a three-term recursion for the partition function of a colored bosonic lattice model; it is expressed in terms of Hecke module actions of Demazure-Lusztig operators that also come up in representation theory of of $p$-adic groups \cite{BruBuLi, CheMa, CheMa2, Ion} and $K$-theory of flag varieties \cite{Lusztig}. In Section~\ref{sec:recurrence}, we express the four-term recurrence as divided difference operators on the partition function. It is likely that these operators in this paper similarly describe actions of a Hecke algebra.

For (2), we mean the following result, which classifies systems by number of states using only certain boundary data. 

\begin{theorem}\label{thm:intro_mono}
    Consider a system with permutations $\wc , \wdd  \in S_r$ determined by the orderings of the colors and dolors respectively on the boundary, and $\lam$ a partition specifying the positions of the bosons on the top boundary.
    \begin{itemize}
        \item If $\wdd \not\leq \wc$ in the Bruhat order, then the system has no states.
        \item If $\wdd = \wc$, then the system has exactly one state.
        \item If $\wdd \leq \wc$ and $\lam$ is sufficiently dominant, then the system has many states.
    \end{itemize}
\end{theorem}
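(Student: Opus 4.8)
The plan is to recast the question of how many states a system has as a question about integer flows on the lattice, and to recognize the resulting feasibility and uniqueness conditions as the Bruhat comparison $\wdd \le \wc$. The crucial structural fact is the pairing constraint: because each boson on a vertical edge consists of one color together with one dolor, every vertical edge carries exactly as many color-strands as dolor-strands. Consequently the multiset of vertical edges occupied by the colors and the multiset occupied by the dolors coincide; I would encode this common occupation by the numbers $m_j(k)$ of pairs crossing the horizontal cut between rows $k$ and $k+1$ in column $j$. A state is then the data of how the color-strands (entering from the top at the columns prescribed by $\lam$, moving only down and to the right, exiting on the right) and the dolor-strands (same top entries, moving down and to the left, exiting on the left) are routed so as to produce a common vertical profile $m_j(k)$. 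First I would make precise how $\wc$ and $\wdd$ are read off the boundary as the bijections between the top entries and the right, respectively left, exits, and check that each routing of a single family is equivalent to a lattice path system whose feasibility against a prescribed profile is governed by Hall-type rank inequalities at every cut.

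With this dictionary in place, the three cases flow from the same source. Cutting the lattice along a staircase separating a northwest region and counting, on one hand, the dolor-strands that have already turned left past column $j$ by row $i$ and, on the other, the color-strands available to supply the matching vertical edges, yields at each cut an inequality comparing the dolor and color rank counts; the full family of these inequalities over all $(i,j)$ is precisely the rank-matrix characterization of $\wdd \le \wc$ in the Bruhat order. If any inequality fails there is no feasible common profile, so $\wdd \not\le \wc$ forces an empty system. When $\wdd = \wc$ every one of these inequalities is an equality, which leaves no slack in the flow: the profile $m_j(k)$ is determined, and then the rightward color routing and leftward dolor routing are each forced, giving exactly one state. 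Here the down-right/down-left monotonicity is what upgrades equality of the boundary permutations to rigidity of the interior.

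For the last case, when $\wdd \le \wc$ with strict inequality in at least one rank condition, that slack is exactly the room to reroute. I would exhibit an explicit local modification --- pushing a color--dolor pair down one extra row before splitting, equivalently inserting a small ``bubble'' into the shared vertical profile --- that changes the state while preserving all boundary data, and then argue that sufficient dominance of $\lam$ supplies enough vertical and horizontal separation between the entering bosons that many such modifications can be performed independently, producing a family of distinct states whose size grows with $\lam$. The main obstacle I anticipate is the necessity direction: making the staircase-cut counting airtight, so that the inequalities read off the lattice match the standard rank-function description of Bruhat order on the nose (including the correct orientation, with the colors dominating the dolors), and confirming that feasibility of the two one-sided routings against a common profile is genuinely \emph{equivalent} to the conjunction of all these inequalities rather than merely implied by it. A secondary technical point is quantifying ``sufficiently dominant'' so that the independent bubble insertions in the third case cannot interfere or collapse onto one another.
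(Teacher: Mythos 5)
Your plan takes a genuinely different route from the paper, but it contains a gap that goes beyond the technical point you flag at the end: the encoding of a state as ``two one-family routings against a common vertical profile $m_j(k)$'' is lossy, and what it discards is exactly where the Bruhat condition lives. A vertical edge does not carry a mere number of pairs; it carries a multiset of \emph{bosons}, i.e.\ of specific color--dolor pairs, and the admissible vertices couple the two families through this pairing (in particular a rightward color and a leftward dolor meeting in a row cannot cross: they can only merge, at the one unfused vertex whose type matches that exact pair, and descend). Concretely, take $r=2$, $N=2$, $\lam=(1,1)$, so the bosons $(c_1,d_1)$ and $(c_2,d_2)$ enter the \emph{same} column. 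Compare the system in which $c_1,d_1$ exit in row $1$ and $c_2,d_2$ in row $2$ (then $\wc=\wdd=e$, and there is exactly one state) with the system in which $c_1,d_2$ exit in row $1$ and $c_2,d_1$ in row $2$ (then $\wc=e$, $\wdd=s_1\not\le\wc$, and there is no state: the two splittings forced in row $1$ release $c_2$ and $d_1$ moving toward each other inside one fused vertex, with the only merge vertex of type $(c_2,d_1)$ lying on the wrong side). These two systems have \emph{identical} strand data --- the same colors and the same dolors enter at the same columns and exit at the same rows --- so every cut, horizontal or staircase, counts exactly the same numbers of color-strands and dolor-strands in both. No family of counting inequalities can separate them; the distinction sits in the pairing, which is not even a conserved labelling (each merge re-pairs a dolor with a new color), hence not a quantity a cut can count. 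So your identification of the cut inequalities with the rank-matrix criterion for $\wdd\le\wc$ must fail whenever $\lam$ has equal parts, while the first two bullets are asserted, and proved in the paper, for \emph{all} $\lam$. The converse half of your claimed equivalence fails as well: as the paper's phase diagram (Figure~\ref{fig:mono_states_phases}) shows, $\wdd\le\wc$ with $\lam$ insufficiently dominant can still give an empty system, so feasibility is strictly stronger than any $\lam$-independent list of inequalities.

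For what it is worth, your counting is sound and does prove the first bullet when $\lam$ is strictly decreasing: cutting below row $k$ along columns $\ge\lam_q$ and using that vertical edges carry equal numbers of colors and dolors gives $\#\{i>k:\wdd(i)\le q\}\le\#\{i>k:\wc(i)\le q\}$, which over all $k,q$ is the rank criterion. To cover general $\lam$ you would have to refine the cuts to the unfused level, where the monochrome ordering of Definition~\ref{def:mono_ordering} separates bosons sharing a fused column, and you would still owe a constructive argument for the second and third bullets, since a common profile together with two separately feasible routings need not assemble into an admissible state. The paper avoids all of this by never leaving the category of states: Lemma~\ref{lem:states_exist_implies_bruhat} inducts on the number of merge (fourth-type) vertices of Figure~\ref{fig:monochrome_weights}, observing that removing one swaps two dolors on the left boundary and multiplies $\wdd$ by a transposition that increases length, so existence of a state yields a chain $\wdd<\wdd t_1<\dots<\wc$, i.e.\ the chain characterization of Bruhat order; uniqueness for $\wdd=\wc$ is the row-by-row forcing argument of Proposition~\ref{prop:mono_conditions}; and Proposition~\ref{prop:states_exist} reverses the induction, dominance of $\lam$ guaranteeing a free column for each inserted merge vertex.
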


\begin{figure}[ht!]
    \centering
    \input{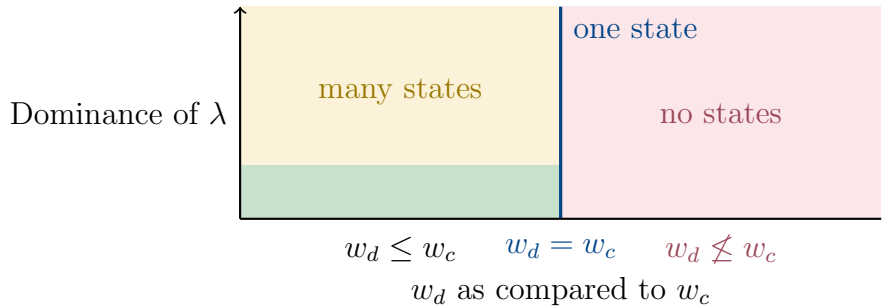}
    \caption{A phase diagram summarizing the number of states of a system based on its boundary data. Note that when $\wdd \not < \wc$, the number of states does not depend on $\lam$ at all. Once $\lam$ is sufficiently dominant, the condition that $\wdd \leq \wc$ is again enough to say that the system has at least one state. In the green region (bottom left), where $\lam$ is not sufficiently dominant, a system may have no states, one state, or many states.}
    \label{fig:mono_states_phases}
\end{figure}

Using the above result we are able to discern which systems have no states, one state, or many states based on a Bruhat order relation on their boundary data. For the first two conditions, it is powerful that we may identify systems with one or no states using only the Bruhat order condition on some of the boundary data. In the third case, systems with $\wdd \leq \wc$ may have any number of states, provided that $\lam$ is sufficiently dominant.
Aside from being useful in characterizing the partition function, this is an interesting result on the combinatorics of systems of the model. 

To prove Theorem~\ref{thm:intro_recurrence_determines_pt} we show that (1) and (2) together determine the partition function of any system. We are able to easily determine the partition functions of the two types of special systems in Theorem~\ref{thm:intro_mono}: for a system with no states, its partition function is 0, and for a monostatic system, the partition function is the weight of its sole state. So it remains to show that we can compute the partition function for systems with $\wdd \leq \wc$, which may have many states. In Section~\ref{sec:recurrence} we conclude the proof of Theorem~\ref{thm:intro_recurrence_determines_pt} by showing that for such a system $\Si{}$ with $\wdd \leq \wc$, we may use the four-term recurrence relation to write $Z(\Si{})$ in terms of partition functions of systems descending toward those with $\wdd \not < \wc$. The partition functions of monostatic systems and systems with no states serve as base cases for the recursion.

There are similar results to Theorem~\ref{thm:intro_recurrence_determines_pt} in the literature on solvable lattice models. It is a standard application of the Yang-Baxter equation to prove a recurrence relation on the partition function. For most previously studied models this recursion often has only two or three terms: generally uncolored models have two terms \cite{BBF,I,Z} and colored models have three terms \cite{BN, BW}. In \cite{BBBG2}, there is a four-term relation on the partition function. They show that this recursion determines the partition function by identifying ``ground state'' systems, which comprise a subset of the systems which have exactly one state in their model. Theorem~\ref{thm:intro_mono} provides a more general result classifying systems exactly by quantity of states.

We will also show a process of \textit{color merging} connecting our bicolored bosonic models to previously studied colored bosonic models \cite{BN, BW} and uncolored bosonic models \cite{Ko}. This process has been called \textit{local lifting} by \cite{BN} and \textit{color-blindness} by \cite{BW}. By locally merging the dolors, we are able to recover the weights of the colored models in \cite{BN}. This local property has global implications on the partition functions of systems, implying connections with matrix coefficients of $p$-adic groups. By way of the color merging process on the colored bosonic models to uncolored models in \cite{BN}, we may also draw connections with Hall-Littlewood polynomials \cite{Ko, M}.

It is well known that states of solvable lattice models correspond to combinatorial objects such as semi-standard Young tableaux, Gelfand-Tsetlin patterns, and tilings of the plane. We will define certain 2-colored Gelfand-Tsetlin patterns and show that they correspond to states of the bicolored bosonic model.

A summary of the main results is below.

\begin{enumerate}
    \item \textit{The bicolored bosonic model.} We define two equivalent versions of the model, referred to as \textit{unfused} and \textit{fused}. We apply a process of vertex fusion to create these weights for the fused model from the unfused model.
    \item \textit{The model is solvable.} This means that it satisfies the Yang-Baxter equation.
    \item \textit{Classification of systems with zero and one state.} We prove a condition on the boundary conditions defining a system which determines when a system has no states or has exactly one state. This condition compares under the Bruhat order two permutations which define the boundary of a system. 
    \item \textit{The partition function satisfies a four-term recursion.} This recursion plus knowledge of special systems determines the recurrence relation. We also express this relation in terms of divided difference operators on the partition function. This will also likely be a source of connections to matrix coefficients of $p$-adic representations, similar to \cite{BN}. 
    \item \textit{Color merging.} We show that locally the weights of these bicolored bosonic models are generalizations of colored and uncolored bosonic models. This local property has global implications, showing that our model has connections to matrix coefficients of $p$-adic groups and Hall-Littlewood polynomials as in \cite{BN}, for example.
    \item \textit{Correspondence with Gelfand-Tsetlin patterns.} We show that states of the model are in bijection with certain 2-colored Gelfand-Tsetlin patterns. 
\end{enumerate}

\subsection{Organization} 
The paper is organized as follows. First, we define the bicolored bosonic lattice model. In Section~\ref{sec:monostatic_systems} we show combinatorial conditions involving the Bruhat order for when a system has no states, exactly one state, or many states. In Section~\ref{sec:solvable}, we introduce the $R$-vertex and show that the Boltzmann weights for the model satisfy the Yang-Baxter equation, proving that the model is \emph{solvable}. In Section~\ref{sec:recurrence}, we use the solvability of the model to compute the partition function via a recurrence relation. In Section~\ref{sec:llp}, we show that these models are related to colored (as opposed to bicolored) bosonic lattice models of \cite{BN} and uncolored bosonic lattice models. Finally in Section~\ref{sec:gt_patterns} we show a bijection between states of the lattice model and 2-colored Gelfand-Tsetlin patterns.

\subsection{Acknowledgements}
The author thanks Daniel Bump for many valuable conversations and suggestions on this work. The author is also grateful to Andy Hardt and Slava Naprienko for helpful conversations. This work was funded by the NSF Graduate Research Fellowship.

\section{The bicolored bosonic model}
In this section we describe the vertex model, and its systems and states. The unfused model is a building block to ultimately construct the fused model, which we will be working with for the rest of the paper unless otherwise specified. Then we will describe the boundary conditions which describe the systems we will work with for the rest of the paper.

The vertex models we consider in this paper will have two types of particles: \emph{colors} which trace out down-right paths, and \emph{dolors} which trace out down-left paths. We can think of the \emph{dolors} as `dual colors,' or almost as antiparticles for the colors: on vertical edges, pairs of one color and one dolor travel together, and if the color turns right on a horizontal edge, its paired dolor must turn left.

Let $\c = \{c_1,\dots,c_r\}$ be the set of colors and $\d = \{d_1,\dots,d_s\}$ the set of dolors. We refer to pairs $(c,d)\in\c\times\d$ as \emph{bosons}.

\subsection{The unfused model} 
An unfused vertex is associated with three parameters: a color $c \in \c$, a dolor $d \in \d$, and a spectral parameter $z \in \C$. The admissible unfused vertices and their Boltzmann weights are shown in Figure~\ref{fig:monochrome_weights}. On a vertical edge, pairs of particles of types $c,d$ must travel together. The model is \emph{bosonic}, so any number $n \in \N$ of $(c,d)$ pairs may travel along a vertical edge. The set of possible spins for a vertical edge therefore is
\[
\Sigma_v^{(c,d)}= \{0,1,2,\cdots\} = \N.
\]
Horizontal edges are occupied by exactly one particle, so the spinset for a horizontal edge is 
\[
\Sigma_h^{\textrm{unfus}} = \c \cup \d.
\]

\begin{figure}[ht!]
    \centering
    \input{figures/monochrome_weights.tex}
    \caption{Admissible unfused vertices. Each unfused vertex is associated with a color $c\in \c$ (solid yellow), a dolor $d \in \d$ (dotted yellow), and a spectral parameter $z\in \C$. The weights also depend on another global parameter $t\in \C$. The number $n\in \Z_{\geq 0}$ represents the number of bosons of type $(c,d)$ which are travelling along a vertical edge. In the images, dolors are represented by dotted lines ($x\in\d$) and colors by solid lines ($a\in\c$). Any vertex configurations not pictured has Boltzmann weight 0.}
    \label{fig:monochrome_weights}
\end{figure}

\subsection{The fused model}\label{sec:fused_model}
An admissible fused vertex results from the `fusion' of several admissible unfused vertices. We describe the fusion process in detail below.

Suppose $\c$ is the set of $r$ colors and $\d$ is the set of $s$ dolors. The set $\c\times\d$ identifies types of \emph{bosons}, which are pairs of one color and one dolor. We impose an ordering on the bosons.

\begin{definition}[Monochrome ordering of bosons]\label{def:mono_ordering}
    We define $(c_1,d_s)$ to be the minimal boson. Given $(c_i, d_j) \in \c\times\d$ its successor $(c_i, d_j)+1$ is
    \begin{equation}\label{eq:mono_ordering}
    	(c_i, d_j) + 1 = 
    	\begin{cases}
    		(c_{i+1},d_j) & \textrm{if }i< r, \\
    		(c_1,d_{j-1}) & \textrm{if }i = r, j > 1, \\
    		(c_1,d_s) & \textrm{if }i = r, j = 1.
    	\end{cases}
    \end{equation}

    Equivalently, given two bosons $(c,d), (c',d') \in \c\times\d$ we say that $(c,d) < (c',d')$ if $d > d'$, or both $d = d'$ and $c < c'$.
\end{definition}
We may denote a dolor-color pair $(c,d)$ as $cd$ when possible, and its successor as $(c,d)+1$ or $cd + 1$. We also may write $(c,d)_i$ or $(cd)_i$; we define the minimal element $(c_1,d_s) = (cd)_1$.

\begin{figure}[ht!]
	\input{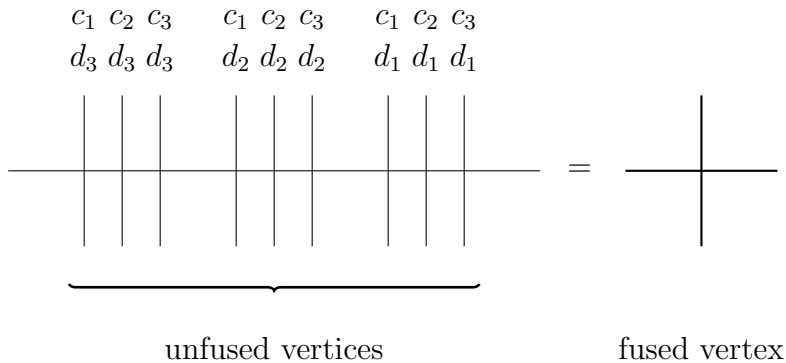}
	\caption{Vertex fusion replaces many unfused vertices with one equivalent fused vertex. Vertical edges on the left hand side are labelled by bosons according to the monochrome ordering in Definition~\ref{def:mono_ordering}. The weight of a fused vertex is the product of the weights of those unfused vertices. Here, $r=s=3$.}
	\label{fig:vertex_fusion}
\end{figure}

A \emph{fused vertex} replaces $rs$ unfused vertices, as in the example in Figure~\ref{fig:vertex_fusion}. The spinset for horizontal edges in a fused system is unchanged from the unfused system 
\[\Sigma_h^{\textrm{fus}} = \Sigma_h^{\textrm{unfus}} = {\bf c} \cup {\bf d}.\] 
The spinset for vertical edges in the fused system is the product of the spinsets for the correseponding monochrome vertical edges,
\[
\Sigma_v^\text{fus} = \prod_{(c,d) \in \c\times\d} \Sigma_v^{(c,d)} = \N^{rs},
\]
where each copy of $\N$ corresponds to the number of bosons of each type.

We say the labelling of edges around a fused vertex is admissible if there is an admissible labelling of the unfused vertices. The Boltzmann weight of a fused vertex is the product of the weights of the monochrome vertices in this decomposition.

\begin{equation}\label{eq:fused_weights}
\fwt(\v) = \prod_{(c,d) \in \c \times \d } \ufwt(\v_{(c,d)}).
\end{equation}

\subsection{Boundary conditions for a system}
In this subsection we define systems and states of the model. A system is an ensemble of states, in our case the ensemble of states sharing the same boundary conditions. A state of the model is constructed on a square grid with a fixed number of $\numrows$ rows and $\numcols$ columns. At each intersection of a row and column, there is a vertex, and each vertex is adjacent to four edges. Edges which are adjacent to two vertices are called \emph{internal edges}, and edges which are adjacent to only one vertex are called \emph{boundary edges}.

An \emph{(admissible) state} of the model is an assignment of spins to all edges, such that the configuration of edges around each vertex is an admissible (fused) vertex. A \emph{boundary condition} is a fixed labelling of the boundary edges.
A \emph{system} is the collection of states sharing the same boundary conditions and the weights of those states.

We will study systems with the following boundary conditions throughout this paper. Let $\numrows,\numcols \in \N$ and take $\c = \{c_1,\dots,c_r\}$ and $\d = \{d_1,\cdots,d_r\}$.
Let $\lam$ be a partition of length $\numrows$ and $\lam_1 < \numcols$. Let $\wi{1},\wi{2},\wi{3},\wi{4} \in S_\numrows$, and denote the long element by $\longelt \in S_\numrows$. Let $\z = (z_1,\dots, z_\numrows) \in \C^\numrows$ be the spectral parameter. 

\begin{definition}\label{def:boundary_conditions}
    The \emph{system} $\S(\lam, \wi{1},\wi{2},\wi{3},\wi{4}, \z)$ defines the collection of states with the following boundary conditions. 
    \begin{itemize}
        \item The grid has $\numrows$ rows and $\numcols$ columns. The rows are labelled $1,\dots,\numrows$ from top to bottom and the columns are labelled $\numcols-1,\dots,0$ from left to right.
        \item The right boundary edges are occupied by colors $\wi{1}\c$.
        \item The left boundary edges are occupied by dolors $\wi{2}\d$. 
        \item The top boundary edges are occupied by the pairs $(\wi{3}\c, \longelt\wi{4}\d)_i$ in the columns $\lam_i$.
        \item The bottom boundary edges are unoccupied.
        \item The spectral parameter $\z$ labels the rows.
    \end{itemize}
\end{definition}

See Figure~\ref{fig:fused_state_ex} for an example of both states of a system of the model.

\begin{figure}[ht!]
	\input{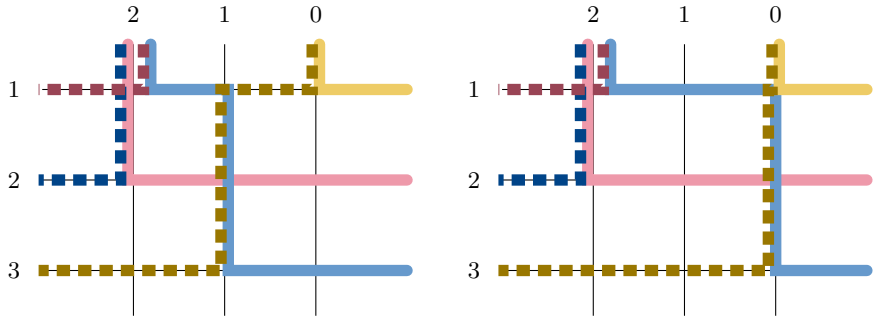}
	\caption{Both states of the system $\S(\lam,\wi{1},\wi{2},\wi{3},\wi{4})$ with boundary conditions given by $\lam =(2,2,0)$, $\wi{1} = (123)$, $\wi{2} = ()$, $\wi{3} = (12)$, and $\wi{4} = (123)$.}
	\label{fig:fused_state_ex}
\end{figure}

\subsection{The partition function} The partition function of a system is a central object in the study of solvable lattice models. In later sections we will see different ways to compute it for special systems, and then for a general system. We will define it in terms of the Boltzmann weights.

The \emph{Boltzmann weight of a state $s$} of the lattice model is the product of the Boltzmann weights of its vertices,
\[
\wt(s) = \prod_{\v \in s} \fwt(\v),
\]
where we recall that we compute the fused weight of a vertex from the unfused weights in Figure~\ref{fig:monochrome_weights} and the fusion process described in Figure~\ref{fig:vertex_fusion} and Equation~\eqref{eq:fused_weights}.

\begin{definition}\label{def:partition_function}
The \emph{partition function} of a system $\Si{}$ is the sum of the weights of its states,
\[
Z(\Si{}) = \sum_{s\in\Si{}} \wt(s).
\]
\end{definition}

\section{Combinatorial conditions for number of states}\label{sec:monostatic_systems}

In this section we will classify systems according to number of states by a condition on their boundary data alone. This result is interesting as a characterization of the combinatorics of the lattice model. It will also show to be extremely useful for us later in the paper, as an important piece in the puzzle of being able to compute partition functions of systems using a recurrence relation instead of the Boltzmann weights.

Recall that we define a system by the boundary conditions described in Definition~\ref{def:boundary_conditions}. The main result of this section is the following theorem.

\begin{theorem}
    Consider a system $\Si{} = \S(\lam,\wi{1},\wi{2},\wi{3},\wi{4},\z)$. Define the permutations $\wc = \wi{3}\wi{1}^{-1}$ and $\wdd = \longelt\wi{4}\wi{2}^{-1}$. Then the following are true.
    \begin{itemize}
        \item If $\wdd \not\leq \wc$ in the Bruhat order, then $\Si{}$ has no states.
        \item If $\wdd = \wc$, then $\Si{}$ has exactly one state.
        \item If $\wdd \leq \wc$ and $\lam$ is \emph{sufficiently dominant}, then $\Si{}$ has at least one state.
    \end{itemize}
By \textit{sufficiently dominant}, we mean that $\lam_i - \lam_{i+1} \geq N$, for sufficiently large $N$.
\end{theorem}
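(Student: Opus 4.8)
The plan is to prove existence by exhibiting a single admissible state, using the path description of states from Section~\ref{sec:fused_model} together with the room afforded by dominance of $\lam$. Recall that a state is a family of color paths (monotone down-and-right, each from a top entry to a right exit) and dolor paths (monotone down-and-left, each from a top entry to a left exit), subject to the rule that every vertical edge carries equally many colors as dolors, organized into $(c,d)$-pairs admissible under the monochrome ordering, while every horizontal edge carries at most one particle. The permutations $\wc = \wi{3}\wi{1}^{-1}$ and $\wdd = \longelt\wi{4}\wi{2}^{-1}$ record, in a common frame (the factor $\longelt$ compensating for the fact that the dolors exit on the opposite side from the colors), how the two families are rearranged between the top boundary and the side boundaries. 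Building a state therefore amounts to drawing a compatible pair of wiring diagrams, one for the colors realizing $\wc$ and one for the dolors realizing $\wdd$, whose only coupling is that a color and a dolor may change partners only through a local split/merge, so that a crossing of the two families must be accompanied by a short vertical pair-segment.

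First I would use sufficient dominance to decouple the construction. Because $\lam_i - \lam_{i+1} \geq N$ for large $N$, each pair enters in its own widely separated column, and the columns lying between consecutive entry points furnish wide horizontal bands in which crossings can be performed one at a time without interference. Concretely, I would fix a reduced word $\wc = s_{i_1}\cdots s_{i_\ell}$ and reserve one band for each of its letters, so that the entire color wiring is realized by a left-to-right sequence of adjacent transpositions, each carried out in its own band; since we only need one state, any single such realization will suffice.

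The key step is to insert the dolor wiring compatibly, and this is exactly where the hypothesis $\wdd \leq \wc$ enters. By the subword characterization of the Bruhat order, the chosen reduced word for $\wc$ admits a subword that is a reduced word for $\wdd$. I would build the state so that the colors perform all $\ell$ crossings while the dolors perform exactly the crossings indexed by this subword: at a shared crossing the color and its partnered dolor cross together along a common vertical pair-segment, whereas at a color-only crossing the pair splits, the color advances to the neighboring strand, and it re-merges with a new dolor further down --- the maneuver already visible in the two-state example of Figure~\ref{fig:fused_state_ex}. The monostatic case $\wc = \wdd$, established earlier, is precisely the instance in which the subword is the whole word and no partner-swapping is needed; it furnishes the base configuration onto which the color-only crossings are grafted.

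The main obstacle is verifying global admissibility of the assembled configuration: one must check that each local crossing gadget is an admissible fused-vertex pattern, that adjacent gadgets match along their shared edges, and --- most delicately --- that on every vertical edge the colors and dolors pair into pairs admissible for the monochrome ordering of Definition~\ref{def:mono_ordering}. Here dominance is used essentially: spreading the entry columns guarantees that at most one gadget is active in any band, so that the pairing constraint is only ever tested locally, where it reduces to the admissibility of a single split/merge. I expect the threshold for how dominant $\lam$ must be to come out as roughly $\ell(\wc)$, the number of bands required, and I would track which labeled strands are being transposed, so as to ensure the chosen reduced word of $\wc$ and its $\wdd$-subword are compatible with the specific colors $\wi{3}\c$, $\wi{1}\c$ and dolors $\longelt\wi{4}\d$, $\wi{2}\d$ prescribed on the boundary.
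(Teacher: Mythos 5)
Your proposal addresses only the third bullet of the theorem. The first bullet --- that $\wdd \not\leq \wc$ forces the system to have \emph{no} states --- is a non-existence statement and cannot be obtained by exhibiting configurations; it requires showing that the mere existence of a state implies $\wdd \leq \wc$. The paper proves this (Lemma~\ref{lem:states_exist_implies_bruhat}) by induction on the number of merge vertices (the fourth type in Figure~\ref{fig:monochrome_weights}) in a hypothetical state: exchanging, on the left boundary, the two dolors involved in the topmost, leftmost such vertex (Figure~\ref{fig:vertex_replace}) produces a state of a system whose $\wdd$ has increased by a transposition, and the process terminates at a state with no merge vertices, for which $\wdd = \wc$; hence the original $\wdd$ lies below $\wc$ in the Bruhat order. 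Nothing in your proposal plays this role. Likewise, the uniqueness assertion in the second bullet is not proven but cited as ``established earlier''; in the paper it is Proposition~\ref{prop:mono_conditions} (the color--dolor pair exiting in each row must enter together on the top boundary, which forces the filling row by row). So even granting your construction, only one of the three assertions would be established.

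The construction itself also rests on a gadget that does not exist in this model. A color--dolor pair occupies vertical edges only: pairs travel straight down, a dolor can never move to the right, and once a pair splits its color moves only right while its dolor moves only left, never to rejoin. Consequently two pairs can never exchange positions ``together along a common vertical pair-segment,'' so your dichotomy of shared versus color-only crossings collapses. The wiring-diagram picture is also misread geometrically: because colors exit on the \emph{right} boundary, two color paths are forced to cross exactly when their entry and exit orders \emph{agree} (co-inversions of $\wc$), not when they are inverted, and a state cannot be organized as one adjacent transposition per vertical band --- instead each row consists of an alternating sequence of split and merge vertices with exactly one color exiting right and one dolor exiting left. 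The correct way the Bruhat hypothesis enters, and the salvageable core of your idea, is the paper's Proposition~\ref{prop:states_exist}: choose a chain $\wdd < \wdd t_1 < \cdots < \wdd t_1\cdots t_k = \wc$ of transpositions, start from the monostatic state of the system whose left boundary is permuted so that its $\wdd$-value equals $\wc$, and insert one merge vertex per transposition while descending the chain; sufficient dominance of $\lam$ is used precisely to guarantee a free column between the two relevant entry columns at each insertion, not to isolate reduced-word ``bands.''
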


We will first characterize the \emph{monostatic systems}, that is, the systems with exactly one state. Then we will do the same for systems with no states. We will use the notation $\wc, \wdd$ as in the theorem, so we redefine these permutations below to reference throughout the section.

\begin{definition}\label{def:wc}
    For a system $\Si{}$, define the permutations $\wc(\Si{}) = \wi{3}\wi{1}^{-1}$ and $\wdd(\Si{}) = \longelt\wi{4}\wi{2}^{-1}$. When the system $\Si{}$ is unambiguous, we may simply write $\wc, \wdd$.
\end{definition}

\subsection{Systems with exactly one state}
\emph{Monostatic systems} are the systems with exactly one state. There are two reasons why a system may have exactly one state: the number of states may be restricted by either: 
\begin{enumerate}
    \item the permutations $\wi{1},\wi{2},\wi{3},\wi{4}$ defining the boundary, or \item the dominance of the permutation $\lam$.
\end{enumerate}
In this section we will characterize monostatic systems of this first type, an example of which is shown in Figure~\ref{fig:monostatic_example}. In fact we will show that in this first case, $\lam$ has no effect on the number of states of the system. On the other hand, an example of a system which has only one state due to a restriction by $\lam$ is shown in Figure~\ref{fig:fused_state_ex_mono}, as compared to the system with multiple states in Figure~\ref{fig:fused_state_ex}. We will discuss the conditions on the dominance of $\lam$ later.

\begin{figure}[ht!]
	\input{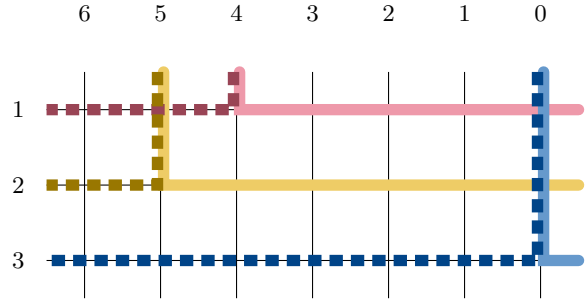}
	\caption{This system has boundary conditions $\lam = (5,4,0)$, $\wi{1} = (23)$, $\wi{2} = (23)$, $\wi{3} = (123)$, $\wi{4} = (12)$, and we can compute $\wdd = (12)$, $\wc=(12)$. The system has exactly one state.}
	\label{fig:monostatic_example}
\end{figure}

\begin{figure}[ht!]
	\input{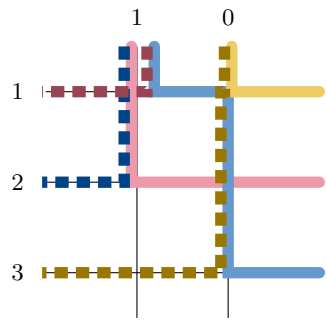}
	\caption{The system with boundary conditions given by $\lam =(1,1,0)$, $\wi{1} = (123)$, $\wi{2} = ()$, $\wi{3} = (12)$, and $\wi{4} = (123)$ has exactly one state. We compute $\wdd = (132)$, $\wc = (13)$. Note that this system has the same boundary permutations as the one in Figure~\ref{fig:fused_state_ex} which has many states, and only $\lam$ is modified.}
	\label{fig:fused_state_ex_mono}
\end{figure}

An example of such a monostatic system and its only state is shown in Figure~\ref{fig:monostatic_example}. We highlight two observations about this example. 
\begin{enumerate}
    \item In each row, the color on the right boundary, and dolor on the left boundary in that row enter the system as a pair on the top boundary. 
    \item Only three types of unfused vertices appear in the example: the fourth type in Figure~\ref{fig:monochrome_weights}, in which a color coming from the left and a dolor coming from the right meet and turn down as a pair, does not appear. 
\end{enumerate}
We will show that these two observations about this example are in fact generic characterizations of monostatic systems. The next result Proposition~\ref{prop:mono_conditions} makes use of the first observation. Recall the definition of $\wc,\wdd$ from Definition~\ref{def:wc}.

\begin{proposition}\label{prop:mono_conditions}
	For a system $\Si{}$, if $\wc=\wdd$ then the system is monostatic.
\end{proposition}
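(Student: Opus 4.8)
The plan is to prove the two halves of ``monostatic'' separately: that a state exists, and that it is unique. The guiding picture is that the only admissible vertices besides passes-through and single splits is the merge vertex (a color from the left and a dolor from the right turning down together, the fourth type in Figure~\ref{fig:monochrome_weights}), so I want to show the hypothesis $\wc=\wdd$ forbids all merges and rigidifies everything else.

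\textbf{Reformulating the hypothesis.} The first step is to observe that the row in which a given color (resp.\ dolor) leaves the grid is dictated entirely by the boundary data, not by the internal routing: each color carries a fixed label, enters the top in a column fixed by $\lam$ and $\wi{3}$, and exits on the right boundary in the row prescribed by $\wi{1}$. Writing $\pi_c$ for the permutation sending the index of a top pair to the row where its color exits, and $\pi_d$ for the dolor analogue, a short bookkeeping check identifies $\pi_c$ and $\pi_d$ with $\wc$ and $\wdd$. Hence $\wc=\wdd$ is equivalent to $\pi_c=\pi_d$, i.e.\ \emph{for every top pair, its color and its dolor leave the grid in the same row.} Let $\rho$ denote this common permutation, so the color and dolor of pair $i$ both exit in row $\rho(i)$.

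\textbf{Existence.} I would exhibit the state $s_0$ in which the pair entering in column $\lam_i$ runs straight down to row $\rho(i)$ and splits there exactly once, its color running right to the boundary and its dolor running left. Since $\rho$ is a bijection, exactly one pair splits per row, so no two horizontal segments collide; every vertex of $s_0$ is one of the three non-merging types (vertical through-pair, horizontal color, horizontal dolor, single split), hence $s_0$ is admissible, and the exit labels match the boundary precisely because $\rho$ matches pairs to both their color- and dolor-exit rows.

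\textbf{Uniqueness.} The heart is to show every state equals $s_0$, by induction on rows from the top. Assume rows $1,\dots,j-1$ already agree with $s_0$, so the pairs entering row $j$ from above are exactly $\{i : \rho(i)\ge j\}$, each arriving straight down in its column $\lam_i$. In row $j$ exactly one color and one dolor exit, and since $\pi_c=\pi_d$ these belong to a single pair $i_j$ with $\rho(i_j)=j$; to reach the boundary both must turn off the vertical edge, so $i_j$ splits at the vertex $(j,\lam_{i_j})$. Let $a$ be the number of pairs splitting in row $j$. Every split color other than $i_j$'s must turn back down (merge) within row $j$, as must every such dolor, so the $a-1$ leftover colors and $a-1$ leftover dolors are paired off by $a-1$ merges. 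The crucial geometric constraint is that, colors moving right and dolors moving left, a merge can only join a color to a dolor whose split column lies strictly to its right. Because the two \emph{exiting} particles come from the \emph{same} split point, the merging colors and merging dolors occupy the same set of split columns; examining the rightmost column in that set produces a leftover color with no dolor to its right to merge with. Thus no such matching exists unless it is empty, forcing $a=1$: only $i_j$ splits in row $j$, there are no merges, and all other pairs pass straight down. Induction reproduces $s_0$ exactly, and together with existence this shows $\Si{}$ is monostatic.

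\textbf{Main obstacle.} I expect the delicate step to be the no-merge claim in the inductive step: turning the forced coincidence of the two exiting particles' split columns into a genuine obstruction to matching the remaining split particles under the strict ``dolor-to-the-right'' rule. The remaining ingredients, namely the identification of $\pi_c,\pi_d$ with $\wc,\wdd$ and the admissibility check for $s_0$, are routine bookkeeping.
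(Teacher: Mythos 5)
Your proof follows the same skeleton as the paper's: a row-by-row induction in which $\wc=\wdd$ identifies, for each row, a single top-boundary pair whose color and dolor must both exit there; your reformulation via $\rho$, your explicit state $s_0$, and your inductive setup are all correct. Where you diverge is the mechanism that rigidifies each row. The paper notes that the exiting pair must split at its column, whence its color and dolor occupy \emph{all} horizontal edges of the row; since every horizontal edge carries exactly one particle (the spinset $\c\cup\d$ has no empty spin), nothing else can sit on any horizontal edge, so no other pair can split at all. You instead permit extra splits a priori and try to eliminate them with a matching argument on merges.

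That matching argument has a genuine gap, exactly at the step you flagged as delicate. The claim that a merge can only join a color to a dolor whose split column lies \emph{strictly} to its right is false if ``column'' means a column of the fused model. The model is bosonic: when $\lam$ has equal parts (e.g.\ $\lam=(2,2,0)$ in Figure~\ref{fig:fused_state_ex}), several pairs ride the same vertical edge, and two of them can both split \emph{inside a single fused vertex}, after which a color and a dolor whose fused split columns are \emph{equal} can merge there. Concretely, with $r=s=2$ and pairs $(c_1,d_2)$, $(c_2,d_1)$ on one edge, the unfused vertices inside a fused vertex are ordered $(c_1,d_2),(c_2,d_2),(c_1,d_1),(c_2,d_1)$ by Definition~\ref{def:mono_ordering}; splits at the first and last positions can be followed by an admissible merge of $c_1$ with $d_1$ at the third. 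With equality allowed, your rightmost-column argument no longer produces a leftover color: two pairs sharing the rightmost column could, for all your argument shows, cross-merge with one another. The conclusion does survive---a \emph{full} crossed matching of two pairs $(c_A,d_A)$, $(c_B,d_B)$ inside one fused vertex is impossible, since the unfused positions it requires chain into the contradiction $\mathrm{pos}(c_A,d_A)<\mathrm{pos}(c_A,d_B)<\mathrm{pos}(c_B,d_B)<\mathrm{pos}(c_B,d_A)<\mathrm{pos}(c_A,d_A)$---but establishing this forces you down to unfused vertices and the monochrome ordering, which your proof never invokes. The clean repairs are either to run your matching argument on unfused positions, where strictness genuinely holds and each split occupies a distinct position, or to switch to the paper's occupancy argument, which disposes of all extra splits, same-column ones included, in one stroke.
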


\begin{proof}[Proof of Proposition~\ref{prop:mono_conditions}]
	Let $j$ be the permutation $\wc = \wdd$. Then $(c_{\wi{1}^{-1}(i)}, d_{\wi{2}^{-1}(1)})$ the color-dolor pair exiting in the first row, agrees with the pair $(c_{\wi{3}^{-1}(j_1)}, d_{\wi{4}^{-1}\longelt(j_1)})$ which appears as a color-dolor pair on the top boundary. Now there is exactly one way to color the top row since this color-dolor pair must split at the top row and occupy all the horizontal edges. The same argument holds for each subsequent row, therefore the state is unique.
\end{proof}

A corollary shows that our second observation above also characterizes monostatic systems. 

\begin{corollary}\label{cor:fourth_type}
    If a system $\Si{}$ has a state in which no vertices of the fourth type in Figure~\ref{fig:monochrome_weights} appear, then the system is monostatic.
\end{corollary}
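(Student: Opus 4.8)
The plan is to show that the hypothesis forces $\wc = \wdd$, after which Proposition~\ref{prop:mono_conditions} immediately gives that $\Si{}$ is monostatic. Let $s$ be a state of $\Si{}$ in which no vertex of the fourth (merging) type in Figure~\ref{fig:monochrome_weights} occurs. I will track the individual bosons through $s$ and show that the color on the right boundary and the dolor on the left boundary in each row are the two halves of a single boson that entered on the top boundary --- which is exactly the ``first observation'' underlying the proof of Proposition~\ref{prop:mono_conditions}.

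First I would record the local effect of each admissible unfused vertex on the number of bosons of a fixed type $(c,d)$ sitting on a vertical edge: reading the four columns of Figure~\ref{fig:monochrome_weights}, the first two types leave this count unchanged (a single color or dolor passes horizontally), the third (splitting) type decreases it by one as a boson breaks into a color exiting to the right and a dolor exiting to the left, and the fourth (merging) type is the only vertex that increases it. Excluding the fourth type therefore means that along any column the number of bosons of each type is non-increasing from top to bottom, and that the only mechanism by which a boson can leave a vertical edge is a splitting vertex.

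Next I would follow a single boson. Since the bottom boundary is unoccupied, every boson entering at the top must disappear before reaching the bottom, and by the previous step the only way this can happen is at a splitting vertex. Crucially, no unfused vertex other than the merging vertex changes the \emph{type} $(c,d)$ of a boson on a vertical edge, so a boson entering the top boundary as the pair $(c,d)$ remains the pair $(c,d)$ until it splits; at that vertex its color $c$ turns right and its dolor $d$ turns left, both in the same row. A color travelling right can only continue right, since turning down again would require a merging vertex, so $c$ proceeds to the right boundary in that row; symmetrically $d$ proceeds to the left boundary in that same row. As each of the $\numrows$ colors and $\numrows$ dolors enters once on top and exits once on the boundary, this sets up a bijection between top bosons and rows, and shows that the color on the right boundary and the dolor on the left boundary in a given row are precisely the color and dolor of one common top boson.

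Finally I would translate this row-by-row pairing into the equation $\wc = \wdd$. By Definition~\ref{def:boundary_conditions} the assignment of right-boundary colors to rows is governed by $\wi{1}$ and their positions among the top bosons by $\wi{3}$, so $\wc = \wi{3}\wi{1}^{-1}$ records which top boson supplies the color of each row; likewise $\wdd = \longelt\wi{4}\wi{2}^{-1}$ records which top boson supplies the dolor of each row. The statement that in every row these two top bosons coincide is exactly $\wc = \wdd$, and Proposition~\ref{prop:mono_conditions} then shows $\Si{}$ is monostatic. I expect the only delicate point to be the claim that a boson cannot change its color--dolor type as it descends --- that is, that partner-switching is impossible without a merging vertex --- since this is what lets us conclude that the split color and split dolor trace back to the \emph{same} top boson rather than merely to some top boson; establishing it amounts to reading off from Figure~\ref{fig:monochrome_weights} that the first three vertex types each preserve boson type on vertical edges.
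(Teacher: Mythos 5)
Your proof is correct and follows essentially the same route as the paper's: both arguments deduce from the absence of merging vertices that there is exactly one splitting vertex per row, conclude that the right-boundary color and left-boundary dolor of each row come from a single top-boundary boson so that $\wc = \wdd$, and then invoke Proposition~\ref{prop:mono_conditions}. The only difference is that you spell out the boson-tracking details (type preservation, one split per boson, colors/dolors exiting in the split row) that the paper's terse proof leaves implicit.
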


\begin{proof}
    Observe that since there are no vertices of the fourth type, there are exactly $r$ vertices of the third type, one in each row. By the argument of the proof of Proposition~\ref{prop:mono_conditions}, if the vertex of type 3 in the $k$th row is in column $\lam_{j}$, then $j = (j_1,j_2,\dots, j_r)$ defines a permutation such that $\wc=\wdd =j$.
\end{proof}

In fact, the conditions that $\wc = \wdd$ and that no vertex of the fourth type appears are equivalent, the forward implication based on the proof of Proposition~\ref{prop:mono_conditions} and the reverse direction based on the proof of Corollary~\ref{cor:fourth_type}

Finally we show that up to the choice of $\lam$, we have characterized all monostatic systems.

\begin{proposition}\label{lem:fourth_type}
	For a system $\Si{}$ with $\lam_i - \lam_{i+1} > N$ then the following conditions are equivalent:
	\begin{enumerate}
		\item The system is monostatic,
		\item $\wc= \wdd$,
		\item The system has a state in which no vertices of the fourth type in Figure~\ref{fig:monochrome_weights} appear.
	\end{enumerate}
	Dropping the condition on $\lam$, then statements $(2)$ and $(3)$ remain equivalent and imply $(1)$.
\end{proposition}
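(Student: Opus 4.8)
The three implications that do not involve the dominance hypothesis are already available, and I would assemble them first. Proposition~\ref{prop:mono_conditions} gives $(2)\Rightarrow(1)$, Corollary~\ref{cor:fourth_type} gives $(3)\Rightarrow(1)$, and the equivalence $(2)\Leftrightarrow(3)$ is the remark preceding the statement (its forward direction read off from the proof of Proposition~\ref{prop:mono_conditions}, its reverse from the proof of Corollary~\ref{cor:fourth_type}). Together these prove the final sentence of the proposition outright: with no hypothesis on $\lam$, statements $(2)$ and $(3)$ are equivalent and each implies $(1)$. Consequently, to obtain the full equivalence $(1)\Leftrightarrow(2)\Leftrightarrow(3)$ under the dominance hypothesis it remains only to prove the single new implication $(1)\Rightarrow(2)$.

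I would prove $(1)\Rightarrow(2)$ by contraposition: assuming $\wc\ne\wdd$ while $\lam_i-\lam_{i+1}>N$ for all $i$, I must show $\Si{}$ is not monostatic, i.e.\ that it has a number of states other than one. Split on the Bruhat relation between $\wdd$ and $\wc$. If $\wdd\not\le\wc$, then the system has no states by the no-states case of the theorem at the start of this section, so it is certainly not monostatic. The substantive case is $\wdd<\wc$ strictly, where at least one state exists (the $\wdd\le\wc$, dominant case of that theorem) and I must produce a \emph{second}, distinct state.

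To manufacture a second state when $\wdd<\wc$, I would first note that the contrapositive of $(3)\Rightarrow(2)$ forces every state of $\Si{}$ to contain a vertex of the fourth type in Figure~\ref{fig:monochrome_weights}: a genuine merge of a color entering from the left with a dolor entering from the right. This is the footprint of a forced crossing. Fixing one state and the topmost such merge, the plan is to reroute a single colored path by moving the column at which it turns downward. Because the entry columns of two consecutive bosons satisfy $\lam_i-\lam_{i+1}>N$, the horizontal run of that color before it merges spans more than $N$ columns, so its turn can be placed in at least two distinct columns while leaving every neighbouring vertex admissible, and each placement yields a valid state. This is exactly the move exhibited in Figure~\ref{fig:fused_state_ex}, whose two states differ only in the column where the blue color turns down, and the contrast with Figure~\ref{fig:fused_state_ex_mono} (identical permutations, smaller $\lam$, a single state) confirms that dominance is precisely what creates room for the toggle.

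The main obstacle is this last construction: making the rerouting precise and certifying that a genuine second placement is always available once $\wdd<\wc$. I would organize it by reducing, through a Bruhat cover $\wdd\lessdot u\le\wc$ with $u$ covering $\wdd$, to a single transposition that forces exactly one crossing, then realizing that crossing as a downward turn whose column is free to take at least two values as soon as $\lam_i-\lam_{i+1}$ exceeds a threshold $N$ depending only on $r$. Verifying that the two neighbouring vertices remain admissible under the weight table of Figure~\ref{fig:monochrome_weights}, and extracting the explicit bound on $N$, is the routine but unavoidable bookkeeping that completes the proof.
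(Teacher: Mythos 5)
Your proposal is correct and its decisive step coincides with the paper's: both first assemble the $\lam$-independent implications from Proposition~\ref{prop:mono_conditions} and Corollary~\ref{cor:fourth_type}, and both then close the equivalence by using the dominance hypothesis to relocate the topmost fourth-type vertex of an existing state to an adjacent column, producing a second state — the paper packages this as a direct proof of $(1)\Rightarrow(3)$ by contradiction on the unique state, while you package it as a contrapositive proof of the equivalent implication $(1)\Rightarrow(2)$, and in both texts the rerouting step is left at the same sketch level ("we may move this vertex to a column to the left or right"), so your acknowledged bookkeeping gap is no worse than the original. One caution: your Bruhat case split invokes Proposition~\ref{cor:states_dont_exist} and Proposition~\ref{prop:states_exist}, results the paper proves \emph{after} this proposition and whose proofs use it via Lemma~\ref{lem:states_exist_implies_bruhat}; this happens to be non-circular, because those later proofs only consume the $\lam$-independent implication $(3)\Rightarrow(2)$ that you established first, but the detour is also unnecessary — splitting on whether $\Si{}$ has zero states or at least one state (rather than on the Bruhat relation between $\wdd$ and $\wc$) reaches the vertex-moving step immediately and keeps the proof self-contained, which is exactly what the paper does.
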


\begin{proof}[Proof of Proposition~\ref{lem:fourth_type}]
    We have shown that conditions (2) and (3) both imply condition (1); and that (2) and (3) equivalent by the proofs of Proposition~\ref{prop:mono_conditions} and Corollary~\ref{cor:fourth_type}. None of this depends on the choice of $\lam$.

    Now suppose that $\Si{}$ is a monostatic system with $\lam$ satisfying the condition $\lam - \lam_{i+1} > N$. Let $s$ be only state of the system $\Si{}$. Suppose there was a vertex of the fourth type appearing in $s$. If there is more than one such vertex, choose thee topmost, leftmost instance. Since $\lam$ is sufficiently dominant, we may move this vertex to a column to the left or right.
\end{proof}

\subsection{Systems with no states} In this section we classify boundary conditions for systems which have no states. Again, there are two ways for such a `failure' to occur.
\begin{enumerate}
    \item $\lam$ is not \emph{sufficiently dominant}.
    \item The orderings of the colors and dolors on the boundary edges make a state impossible to exist, regardless of $\lam$.
\end{enumerate}

For the first situation, again consider the systems in Figures~\ref{fig:fused_state_ex} and Figures~\ref{fig:fused_state_ex_mono}. These systems have fixed boundary conditions $\wi{1},\wi{2},\wi{3},\wi{4}$, and have $\lam = (2,2,0)$ and $\lam = (1,1,0)$, respectively. The former system has multiple states, and the latter has one state. If we consider a system with the same boundary permutations and $\lam = (0,0,0)$, this system has no states. 

The second situation is our main focus: the case that the permutations describing the order of colors and dolors on the boundary force a system to have no states. This has no dependence on $\lam$. Again, recall we use the boundary conditions defined in Definition~\ref{def:boundary_conditions} and notation $\wc,\wdd$ from Definition~\ref{def:wc}. The next result is most interesting for us.

\begin{proposition}\label{cor:states_dont_exist}
	If $\wdd > \wc$ or $\wdd$ and $\wc$ are incomparable in the Bruhat order, then the system has no states and its partition function is 0.
\end{proposition}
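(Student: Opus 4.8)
The plan is to prove the contrapositive: if the system $\Si{}$ admits an admissible state, then $\wdd \le \wc$ in the Bruhat order. The partition-function claim is then automatic, since with no states $Z(\Si{})$ is an empty sum and hence $0$. I would encode the Bruhat comparison through its rank-function characterization: for $u,v \in \W$ one has $u \le v$ if and only if $\mathrm{rk}_u(i,j) \ge \mathrm{rk}_v(i,j)$ for all $i,j$, where $\mathrm{rk}_w(i,j) = |\{m \le i : w(m) \le j\}|$. Because, by Definition~\ref{def:boundary_conditions}, the colors and dolors enter the top boundary in \emph{common} pairs, I can index the $r$ entering pairs by $m$ and record the row $\pi(m)$ in which the color of pair $m$ leaves on the right and the row $\rho(m)$ in which its dolor leaves on the left. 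Unwinding Definition~\ref{def:wc} identifies $\pi = \wc^{-1}$ and $\rho = \wdd^{-1}$; since Bruhat order is stable under inversion, $\wdd \le \wc$ is equivalent to the family of inequalities $\mathrm{rk}_\rho(i,j) \ge \mathrm{rk}_\pi(i,j)$, and it suffices to establish these from the existence of a state.

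The engine of the argument is the \emph{coupling} of the model: on every vertical edge the number of colors equals the number of dolors, because particles travel vertically only in color--dolor pairs, so on each internal vertical edge the color count and dolor count agree. I would fix $i,j$ and sum this per-edge identity over a staircase (corner) cut of the grid chosen so that its boundary consists of a segment of the grid's left boundary (through which dolors genuinely exit in rows $1,\dots,j$), a horizontal internal segment below row $j$, and a vertical internal segment. Colors move down and to the right and dolors down and to the left, so each color path crosses this cut only through its horizontal/right portions and each dolor path only through its left/horizontal portions; writing down the resulting conservation balance and cancelling the matched vertical-edge contributions via $\#\text{colors}=\#\text{dolors}$ should leave exactly $\mathrm{rk}_\rho(i,j) \ge \mathrm{rk}_\pi(i,j)$, i.e.\ among the leftmost $i$ entering pairs at least as many dolors as colors have been routed into the top $j$ rows. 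The monostatic case $\wc=\wdd$ of Proposition~\ref{prop:mono_conditions}, where all of these inequalities are forced to be equalities, serves to pin down the orientation of the comparison.

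The main obstacle is that colors and dolors may \emph{re-pair}: after a third-type (split) vertex of Figure~\ref{fig:monochrome_weights} a color can travel right and later re-merge with a \emph{different} dolor at a fourth-type (merge) vertex, so the naive picture ``pair $m$ is one fixed color together with one fixed dolor from entry to exit'' is false, and one must reason with the \emph{sets} of colors and of dolors on each edge rather than with fixed partners. Making the cut count rigorous therefore requires (i) treating each color as a single down-right path and each dolor as a single down-left path, using only the vertex-local conservation laws together with the per-edge identity $\#\text{colors}=\#\text{dolors}$, and (ii) checking that the labels permitted by the monochrome fusion order of Definition~\ref{def:mono_ordering} never obstruct the count. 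A secondary subtlety is that $\lam$ may have repeated parts, so several pairs can enter in a single grid column; the cut must be taken along grid lines and the entry index $m$ translated into grid columns with care, after which the statement must be seen to be independent of $\lam$. I expect step (i) --- extracting the clean rank inequality in the presence of re-pairing --- to be the crux, with the label bookkeeping and the $\lam$-degeneracy being routine once the conservation computation is in place.
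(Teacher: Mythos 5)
Your strategy --- prove the contrapositive of Lemma~\ref{lem:states_exist_implies_bruhat} via the rank-matrix criterion for Bruhat order plus a conservation count across cuts --- is correct in its core and genuinely different from the paper's. The paper argues by induction on the number of vertices of the fourth type of Figure~\ref{fig:monochrome_weights} appearing in a state: the topmost, leftmost such merge vertex is eliminated by transposing two dolors on the left boundary, which replaces $\wdd$ by $\wdd t>\wdd$ while fixing $\wc$, the base case being the monostatic situation $\wdd=\wc$ of Proposition~\ref{prop:mono_conditions}. Notably, your anticipated crux (i), re-pairing, is a non-issue for your argument: the count never requires a pair to persist through the bulk. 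One only needs that (a) each color traces a single weakly-monotone down-right path ending on the right boundary, (b) each dolor a single down-left path ending on the left boundary, and (c) every horizontal edge carries exactly one particle. If $L$ is a vertical line with the entry columns of positions $1,\dots,i$ strictly to its left, then of the $j$ horizontal edges crossing $L$ in rows $1,\dots,j$, those carrying colors number at least $\#\{m\le i:\pi(m)\le j\}$ (a color entering left of $L$ and exiting right in a row $\le j$ crosses $L$ in a row $\le j$), and those carrying dolors number at least $\#\{m>i:\rho(m)\le j\}=j-\#\{m\le i:\rho(m)\le j\}$, where $\pi=\wc^{-1}$, $\rho=\wdd^{-1}$ as in your setup; summing gives exactly the rank inequality. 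Your staircase version, which uses the per-vertical-edge identity to cancel the bottom fluxes, yields the same identity and works equally well.

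The genuine gap is the item you labelled routine: repeated parts of $\lam$. When $\lam_i=\lam_{i+1}$, positions $i$ and $i+1$ enter on the same vertical edge, no cut along fused grid lines separates them, and the rank inequality at such $(i,j)$ is simply never produced --- nor is it implied by the inequalities at non-tied indices. Worse, at tied columns the truth of the statement is sensitive to an implicit normalization. Take $r=2$, $N=1$, $\lam=(0,0)$, with pairs $(c_1,d_2)$ and $(c_2,d_1)$ entering column $0$. There is an admissible state in which $c_2$ exits right and $d_2$ exits left in row $1$ (inside the fused vertex, $c_1$ and $d_1$ merge at the monochrome vertex of type $(c_1,d_1)$ and split in row $2$), but there is no state in which $c_1$ exits right and $d_1$ exits left in row $1$. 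If one indexes $(c_2,d_1)$ as pair $1$, the existing state has $\wdd\not\le\wc$, so the proposition is false under that labelling; it holds only under the convention that pairs occupying a tied column are indexed in increasing monochrome order (compare axiom (4) of the 2-colored Gelfand-Tsetlin patterns in Section~\ref{sec:gt_patterns}). So the tie case is exactly where Definition~\ref{def:mono_ordering} carries the content of the statement, and no argument blind to the intra-column order can establish it; your plan to keep cutting ``along grid lines'' fails precisely there, e.g.\ on the system above, where every pair enters a single column and fused cuts see nothing. In short, you inverted the difficulty: (i) is trivial, (ii) is the crux.

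The repair stays inside your framework but must descend to the unfused grid. Under the sorted-tie convention the $r$ entering bosons occupy pairwise distinct monochrome columns whose left-to-right order agrees with the position order (fusion lays out monochrome vertices in increasing monochrome order, Figure~\ref{fig:vertex_fusion}), fusion does not alter rows, and every unfused horizontal edge still carries exactly one particle. Hence a separating vertical cut between monochrome columns exists for every $i$, and the count of the first paragraph applies verbatim, with no case distinction between tied and untied indices. With that modification your proof closes, and the partition-function claim follows trivially as the empty sum.
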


Proposition~\ref{cor:states_dont_exist} follows directly from the following Lemma~\ref{lem:states_exist_implies_bruhat}.

\begin{lemma}\label{lem:states_exist_implies_bruhat}
	If a system $\Si{}$ has at least one state, then $\wdd \leq \wc$ in the Bruhat order.
\end{lemma}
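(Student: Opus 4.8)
The plan is to verify the rank-function (tableau) criterion for the Bruhat order. Writing $r_w(p,q) = \#\{a \le p : w(a) \le q\}$, one has $\wdd \le \wc$ if and only if $r_{\wdd}(p,q) \ge r_{\wc}(p,q)$ for all $1 \le p,q \le \numrows$ (the larger permutation has the smaller rank function). So it suffices to extract, from the existence of a single state $s$, these inequalities. I would first translate the two rank functions into the geometry of $s$. Unwinding Definition~\ref{def:wc} against the boundary conventions of Definition~\ref{def:boundary_conditions} (as already done implicitly in the proof of Proposition~\ref{prop:mono_conditions}), $\wc(\rho)$ is the index, in the $\lam$-ordering of top columns, of the column at which the color exiting on the right in row $\rho$ entered, and $\wdd(\ell)$ is the analogous index for the dolor exiting on the left in row $\ell$. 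Letting $L_q$ denote the $q$ leftmost top columns, $r_{\wc}(p,q)$ then counts the colors originating in $L_q$ that leave through the right boundary within the first $p$ rows, and $r_{\wdd}(p,q)$ counts the dolors originating in $L_q$ that leave through the left boundary within the first $p$ rows. The goal becomes: after $p$ rows, at least as many $L_q$-dolors as $L_q$-colors have exited.

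The heart of the argument is a conservation (cut) computation on the top-left rectangle $\Omega$ formed by the first $p$ rows and the $q$ leftmost columns, whose right side is the vertical line $V$ placed just to the right of column $\lam_q$. I would use three structural facts supplied by the model: (i) colors trace monotone down-right paths and dolors monotone down-left paths; (ii) on every vertical edge the number of colors equals the number of dolors, since particles travel in color-dolor pairs; and (iii) the only particles entering $\Omega$ across its top edge are exactly the $q$ colors and $q$ dolors originating in $L_q$. From (i), every $L_q$-dolor stays weakly left of $V$, hence can leave $\Omega$ only across its left boundary (exiting the lattice) or across its bottom; dually, any color that ever sits left of $V$ must have originated in $L_q$, so \emph{every} color on a bottom edge of $\Omega$ is an $L_q$-color. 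Counting $L_q$-colors into and out of $\Omega$ gives $q = a + g_c$, where $a$ counts those crossing $V$ and $g_c$ those crossing the bottom; counting $L_q$-dolors gives $q = r_{\wdd}(p,q) + g_d$, where $g_d$ counts the $L_q$-dolors crossing the bottom.

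Now fact (ii), applied to the bottom edges of $\Omega$, yields $g_c = (\text{colors on the bottom of }\Omega) = (\text{dolors on the bottom of }\Omega) \ge g_d$, the first equality because all those colors are $L_q$-colors. Finally, any $L_q$-color exiting on the right within the first $p$ rows must cross $V$ at a row no larger than its exit row, hence within the first $p$ rows, so $r_{\wc}(p,q) \le a = q - g_c \le q - g_d = r_{\wdd}(p,q)$. This is the required inequality for every $p,q$, giving $\wdd \le \wc$.

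The step I expect to be the main obstacle is the case in which $\lam$ has repeated parts, so that several top pairs enter through a single column: then no vertical line $V$ separates them, fact (iii) and the assertion ``every color left of $V$ is an $L_q$-color'' both fail, and the clean count is lost. Since $\wc$ and $\wdd$ depend only on $\wi{1},\dots,\wi{4}$ and not on $\lam$, I would resolve this by reducing to the case of distinct columns. Spreading the top pairs into distinct, widely separated columns only enlarges the room available to the paths, so I expect that a state of the original system transports to a state of a system with strictly decreasing $\lam^{*}$ and the \emph{same} $\wc,\wdd$, to which the cut argument applies verbatim. Making this transport precise—inserting columns and rerouting the paths while preserving admissibility of every vertex—is the one genuinely technical point, and is where I would concentrate the careful bookkeeping.
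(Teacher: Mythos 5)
Your core argument is correct, and it is genuinely different from the paper's. The paper proves this lemma by induction on the number of type-4 (color-meets-dolor) vertices in a state: it locates the topmost such vertex, swaps the two relevant dolors on the left boundary (replacing $\wi{2}$ by a transposition times $\wi{2}$), observes that this removes the vertex while replacing $\wdd$ by $\wdd t_1$ with $\ell(\wdd t_1)=\ell(\wdd)+1$, and terminates at a configuration with no type-4 vertices, where $\wdd=\wc$ by Proposition~\ref{prop:mono_conditions} and Corollary~\ref{cor:fourth_type}; the chain $\wdd<\wdd t_1<\cdots<\wc$ then gives the Bruhat inequality. You instead verify the rank-function criterion directly by a conservation count on rectangles, using only path monotonicity and the fact that every vertical edge carries equal numbers of colors and dolors. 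Your route is global and quantitative: it produces all inequalities $r_{\wdd}(p,q)\ge r_{\wc}(p,q)$ at once and never has to check that a rerouted configuration is still admissible (a point the paper treats only pictorially). The paper's route has the advantage of being constructive and reversible: the same surgery, run backwards, is how Proposition~\ref{prop:states_exist} produces states when $\wdd\le\wc$.

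The step you flagged, reduction to distinct parts of $\lam$, is a genuine gap, and your proposed repair is false as stated. When two pairs enter through the same column, the spin on that top edge is a multiset, so the boundary data do not determine which of them is pair $q$ and which is pair $q+1$; both labelings define the same system with the same states, but they give different $(\wc,\wdd)$. Concretely, take $r=s=2$, $N=1$, $\lam=(0,0)$, with bosons $(c_1,d_2)$ and $(c_2,d_1)$ on the single top edge. There is an admissible state in which both pairs split at the row-1 vertex and the freed $c_1$ and $d_1$ rejoin at the unfused vertex labeled $(c_1,d_1)$: then $c_2$ exits right and $d_2$ exits left in row 1, and $(c_1,d_1)$ descends and splits in row 2. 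Since the colors exit in the opposite pairing to the dolors, $\wc\ne\wdd$, so $\{\wc,\wdd\}=\{e,s_1\}$, and the two admissible labelings of the top pairs exchange the values of $\wc$ and $\wdd$; for the labeling with $\wdd=s_1$ and $\wc=e$, the conclusion $\wdd\le\wc$ fails even though a state exists. For that same labeling, your transported system with $\lam^*=(1,0)$ and the same $\wi{1},\dots,\wi{4}$ has no states at all: $c_2$ would start in the left column and $d_2$ in the right column, yet both must exit in row 1 in opposite directions, forcing a color and a dolor to cross, and no vertex of the model permits a crossing (the only color-dolor interaction is the type-4 join). So the claim that a state transports to a state with the same $\wc,\wdd$ cannot hold for an arbitrary labeling of pairs sharing a column.

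The repair is to fix the convention that pairs entering through a common column are indexed compatibly with the monochrome order, i.e.\ with their left-to-right order in the unfused expansion of that column; the lemma itself is only correct with some such convention, and this is not a defect of your approach relative to the paper's, since the paper's key claim $\wdd(i)<\wdd(j)$ for the topmost type-4 vertex is really a statement about unfused entry positions and translates into pair indices only under the same convention. With that convention in place you do not need the transport at all: run your cut argument at the unfused level, placing the vertical cut just to the right of the monochrome column where pair $q$ enters. All three structural facts hold verbatim there (each unfused vertical edge still carries equal numbers of colors and dolors, and monotonicity is unchanged), so the count goes through with no distinctness hypothesis on $\lam$, which is both simpler and more honest than inserting columns and rerouting paths.
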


\begin{proof}[Proof of Lemma~\ref{lem:states_exist_implies_bruhat}]
	We proceed by induction on the number of vertices of the fourth type. A system with no such `bad' vertices has $\wdd=\wc$ by Lemma~\ref{lem:fourth_type}. We will show that removing a `bad' vertex as in Figure~\ref{fig:vertex_replace} has the effect of increasing the Bruhat order of $\wdd$ to this monostatic state.
	
	\begin{figure}[ht!]
		\input{figures/statesexistreplace}
		\caption{We remove a vertex of the fourth type by switching the dolors in row $i$ and $j$ on the left boundary. Call the left system $\Si{}$ and the right system $\Si{}'$. In particular, this swap is the same as replacing the permutation $\wi{2}= \wi{2}(\Si{})$ specifying the left boundary condition with $\wi{2}' = \wi{2}(\Si{}') = (i,j)\wi{2}$. This has the effect of replacing $\wdd(\Si{}) = \longelt\wi{4}\wi{2}^{-1}$ (left) with $\wdd' = \longelt\wi{4}(\wi{2}')^{-1}  = \wdd(i,j)$ (right). Multiplying by this transposition has the effect that $\wdd'$ has one more involution than $\wdd$ (since $i < j$ and $\wdd(i) < \wdd(j)$ but $\wdd(i) > \wdd(j)$), so $\wdd' > \wdd$.}
		\label{fig:vertex_replace}
	\end{figure}
	
	Consider a state $\Si{} = \S(\lam,\wi{1},\wi{2},\wi{3},\wi{4},\z)$. Starting with the topmost, leftmost instance of the fourth type of vertex in $\Si{}$, let $i$ be the row that the vertex appears in and $j$ be the row in which the involved dolor appears on the left boundary as in the left image of Figure~\ref{fig:vertex_replace}. To remove the `bad' vertex, we apply the transposition $t = (i,j)$ to $\wi{2}$ giving a new system $\Si{}' = \S(\lam,\wi{1},t\wi{2},\wi{3},\wi{4},\z)$ with all boundary conditions the same except the dolor ordering on the left side. Since $\wdd(\Si{}') = \longelt\wi{4}\wi{2}^{-1}t$, we have that $\wdd(\Si{}') = \wdd(\Si{})t$, or compressing notation, $\wdd' = \wdd t$. Finally, we argue that $\wdd < \wdd'$. We maintain that $i < j$ and $\wdd(i) < \wdd(j)$, as in the left of Figure~\ref{fig:vertex_replace}. On the right side, $\wdd'(i) > \wdd'(j)$ and all else is the same, so the inversion number of $\wdd'$ is one more than that of $\wdd$. Since $\ell(\wdd') > \ell(\wdd)$ and they differ by a transposition, $\wdd' > \wdd$.
\end{proof}

\begin{proposition}\label{prop:states_exist}
	A system $\Si{}$ has at least one state if and only if $\wdd \leq \wc$ in the Bruhat order and $\lam$ is \textit{sufficiently dominant.}
\end{proposition}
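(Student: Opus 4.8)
The plan is to prove the two implications separately, with essentially all the work in the existence (``if'') direction. For the ``only if'' direction, the Bruhat inequality $\wdd \leq \wc$ is exactly Lemma~\ref{lem:states_exist_implies_bruhat}, so nothing new is needed there; the genuine content of the proposition is that once $\wdd \leq \wc$ \emph{and} $\lam$ is sufficiently dominant, a state is guaranteed to exist. My strategy for this is to run the vertex-removal argument of Lemma~\ref{lem:states_exist_implies_bruhat} in reverse: starting from a monostatic system and inserting one fourth-type vertex at a time, I descend a saturated chain in the Bruhat order from $\wc$ down to the target $\wdd$, maintaining an explicit admissible state at every step.

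Concretely, I would first record that $\wdd$ is determined by the left-boundary data: since $\wdd = \longelt\wi{4}\wi{2}^{-1}$ with $\wi{4}$ and $\longelt$ fixed, each value of $\wdd$ corresponds to a unique choice of $\wi{2}$, while $\wc = \wi{3}\wi{1}^{-1}$ is left untouched. Because $\wdd \leq \wc$, I fix a saturated chain of covers $\wdd = v_0 \lessdot v_1 \lessdot \cdots \lessdot v_m = \wc$, so that $v_{k-1} = v_k\tau_k$ for a transposition $\tau_k$ with $\ell(v_{k-1}) = \ell(v_k) - 1$. This chain yields a sequence of systems $\Si{}^{(k)} = \S(\lam,\wi{1},\wi{2}^{(k)},\wi{3},\wi{4},\z)$ sharing all data except $\wi{2}$, with $\wdd(\Si{}^{(k)}) = v_k$, so that $\Si{}^{(0)} = \Si{}$ and $\Si{}^{(m)}$ has $\wdd = \wc$. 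By Proposition~\ref{prop:mono_conditions}, $\Si{}^{(m)}$ is monostatic and in particular possesses a state. I then induct downward: given an admissible state of $\Si{}^{(k)}$, I construct an admissible state of $\Si{}^{(k-1)}$ by performing the inverse of the move in Figure~\ref{fig:vertex_replace} at the rows $i<j$ singled out by the cover $v_{k-1} \lessdot v_k$, splitting the relevant combined color--dolor strand and rerouting its dolor to the other left-boundary row, which introduces exactly one vertex of the fourth type and lowers $\ell(\wdd)$ by one, as in the Lemma.

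The main obstacle is showing that this inverse move can always be realized as a genuine admissible state --- that the rerouted color and dolor paths fit into the grid without creating a forbidden vertex configuration. This is precisely where sufficient dominance of $\lam$ enters: when $\lam_i - \lam_{i+1} \geq N$ for $N$ large, the occupied top-boundary columns are spread far enough apart that there is always free horizontal room to bend the separated paths into an admissible configuration, and each of the $m = \ell(\wc) - \ell(\wdd)$ insertions consumes only a bounded amount of this room, so a threshold $N$ depending only on $\numrows$ suffices. I would verify (a) that the cover relation $v_{k-1} \lessdot v_k$ forces the inversion pattern on rows $i,j$ needed for the reroute to be consistent with the boundary data, and (b) that the column-counting estimate guarantees admissibility at the newly created fourth-type vertex and along the rerouted horizontal segments. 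Assembling the chain, $\Si{} = \Si{}^{(0)}$ inherits a state, which together with Lemma~\ref{lem:states_exist_implies_bruhat} completes the equivalence.
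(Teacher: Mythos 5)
Your proposal is correct and follows essentially the same route as the paper's proof: the forward direction is delegated to Lemma~\ref{lem:states_exist_implies_bruhat}, and the reverse direction descends a Bruhat chain of transpositions from the monostatic system with $\wdd = \wc$ back to $\Si{}$, reversing the move of Figure~\ref{fig:vertex_replace} one fourth-type vertex at a time, with sufficient dominance of $\lam$ guaranteeing a free column for each insertion. The only cosmetic difference is that you work with a saturated chain of covers, whereas the paper uses any length-increasing chain of transpositions.
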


\begin{proof}[Proof of Proposition~\ref{prop:states_exist}]
	The forward direction is Lemma~\ref{lem:states_exist_implies_bruhat}. For the reverse direction we essentially reverse the proof of Lemma~\ref{lem:states_exist_implies_bruhat}. Since $\wdd \leq \wc$ in the Bruhat order, we can write $\wdd t_1 t_2 \cdots t_k = \wc$ where $t_i$ are transpositions and 
	\[
	\wdd < \wdd t_1  < \wdd t_1 t_2 < \cdots  < \wdd t_1 t_2 \cdots t_k  = \wc.
	\]
	Let $\Si{}^{(i)}$ be the system with boundary conditions $\lam,\wi{1},t_i\cdots t_1\wi{2},\wi{3},\wi{4}$. Since $\wdd t_1  \cdots t_k = \wc$, the state $\Si{}^{(k)}$ is monostatic. For the inductive step, suppose that $\Si{}^{(i)}$ has at least one state. We reverse the process in Figure~\ref{fig:vertex_replace} moving from a state of the right system $\Si{}^{(i)}$ to a state of the left system $\Si{}^{(i-1)}$ by applying the transposition $t_i$ to $\wi{2}(\Si{}^{(i)}) = t_i \cdots t_1 \wi{2}$. Note that to get a state of $\Si{}^{(i-1)}$ from a state of $\Si{}^{(i)}$ we have to make a choice of which column the newly added fourth type of vertex will appear in. The condition that $\lam$ is sufficiently dominant is required to ensure that there exists a column between $\lam_i$ (the column where $\wdd(i)$ appears) and $\lam_j$ (where $\wdd(j)$ appears) to insert the vertex.
\end{proof}

\section{The model is solvable}\label{sec:solvable}
In this section we show that the vertex model is \emph{solvable}. In our setting, we say that a model is solvable if it satisfies the \emph{Yang-Baxter equation}, illustrated in Figure~\ref{fig:ybe}. 

\begin{figure}[ht!]
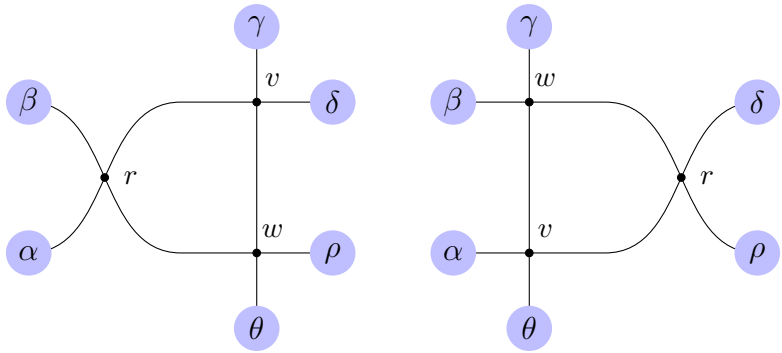

	\begin{center}
		\input{figures/ybelhs}
		\;\;\;\;\;
		\input{figures/yberhs}
	\end{center}
	\caption{The Yang-Baxter equation is satisfied if both of the above systems have the same partition function. This is for all choices of boundary conditions specified by horizontal spins $\alpha,\beta,\delta,\rho$, and vertical spins $\gamma,\theta$. The vertices $w,v$ are ordinary (fused) vertices, and $r$ is an $R$-vertex.}
	\label{fig:ybe}
\end{figure}

The Yang-Baxter equation is satisfied if for each choice of fixed boundary conditions, the two systems have the same partition functions. In this figure, $v$ and $w$ are ordinary fused vertices in our model, and $r$ is an $R$-vertex, which meets four \emph{horizontal} edges, rather than two horizontal edges and two vertical edges. Recall that the admissible ordinary vertices were constructed in Section~\ref{sec:fused_model}. In this section we will define admissible $R$-vertices, and give their Boltzmann weights. Then we will prove one of the major results of this paper, Theorem~\ref{thm:ybe}, that the model satisfies the Yang-Baxter equation.

\subsection{The Yang-Baxter equation is satisfied}
To show that the model is solvable, we will prove the following statement.

\begin{theorem}\label{thm:ybe_with_weights}
    The Yang-Baxter equation in Figure~\ref{fig:ybe} is satisfied by the Boltzmann weights for ordinary fused vertices provided in Section~\ref{sec:fused_model} and $R$-vertex weights in Figure~\ref{fig:rweights}.
\end{theorem}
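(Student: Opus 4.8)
The plan is to reduce the Yang--Baxter equation for the fused model to a single ``unfused'' Yang--Baxter equation attached to one boson type, and then to verify that base case directly from the explicit weights. The reduction rests on the construction in Section~\ref{sec:fused_model}: a fused vertex is by definition the product of $rs$ unfused vertices indexed by the boson types $(c,d) \in \c \times \d$ in the monochrome ordering, whereas the $R$-vertex acts only on the two horizontal edges, whose spin set $\c \cup \d$ is untouched by fusion. Schematically, writing $V_a, V_b$ for the two horizontal (auxiliary) spaces and $W = \prod_{(c,d) \in \c \times \d} \Sigma_v^{(c,d)}$ for the vertical (quantum) space, the fused row operator factors as $L^{\mathrm{fus}}_{aW} = L_{aW_1} \cdots L_{aW_{rs}}$, where $L_{aW_k}$ denotes the unfused vertex on horizontal line $a$ and the $k$-th monochrome vertical line, and the fused Yang--Baxter equation of Figure~\ref{fig:ybe} becomes $R_{ab} L^{\mathrm{fus}}_{aW} L^{\mathrm{fus}}_{bW} = L^{\mathrm{fus}}_{bW} L^{\mathrm{fus}}_{aW} R_{ab}$.

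First I would isolate and prove the base case: the unfused Yang--Baxter equation $R_{ab} L_{aW_k} L_{bW_k} = L_{bW_k} L_{aW_k} R_{ab}$, asserting that the $R$-vertex passes through a single pair of unfused vertices sharing one monochrome vertical line. Because each horizontal edge carries one color or dolor and the vertical edge carries an occupation number $n \in \N$, this is a finite case analysis with $n$ treated as a formal parameter: one fixes the external horizontal spins (colors or dolors, whose relative orderings drive both the vertex weights of Figure~\ref{fig:monochrome_weights} and the same/cross $R$-weights of Figure~\ref{fig:rweights}) together with the external occupations, sums the product of weights over all admissible internal spins on each side, and checks that the two sums agree. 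The color/dolor symmetry of the weights should let me merge several of these configurations, keeping the bookkeeping manageable.

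Then I would lift to the fused equation by the standard fusion argument. Since $L_{bW_k}$ and $L_{aW_\ell}$ act on disjoint tensor factors when $k \neq \ell$ they commute, so in $R_{ab} L^{\mathrm{fus}}_{aW} L^{\mathrm{fus}}_{bW}$ I can slide the $k=1$ factor of $L^{\mathrm{fus}}_{bW}$ leftward until it is adjacent to the $k=1$ factor of $L^{\mathrm{fus}}_{aW}$, apply the base case to commute $R_{ab}$ past that pair, and then induct on the remaining $rs-1$ monochrome columns. Iterating transports $R_{ab}$ from the left of the two fused vertices to the right while interchanging $v$ and $w$, which is precisely the equality of the two partition functions in Figure~\ref{fig:ybe} for every choice of boundary data $\alpha,\beta,\gamma,\delta,\rho,\theta$.

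The hard part will be the base-case verification rather than the formal fusion step. The bicolored setting produces strictly more configuration types than a single-color bosonic model --- the two horizontal lines may carry two colors, two dolors, or one of each, and the turning vertices of types $3$ and $4$ in Figure~\ref{fig:monochrome_weights} exchange a color and a dolor --- so tracking the admissible internal states and the joint dependence of the weights on $t$, the spectral parameters $z_i, z_{i+1}$, and the occupation number $n$ is delicate. The genuine content is choosing the same/cross $R$-weights of Figure~\ref{fig:rweights} so that every one of these cases balances.
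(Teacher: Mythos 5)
Your overall strategy --- factor each fused vertex into its $rs$ monochrome constituents, prove an unfused Yang--Baxter equation as a base case, then sweep the $R$-vertex across the columns (the train argument) --- is exactly the paper's strategy. But your base case is stated incorrectly, and this is a genuine gap rather than a bookkeeping issue. You assume that the single $R$-matrix of Figure~\ref{fig:rweights} intertwines every monochrome pair: $R_{ab}\, L_{aW_k} L_{bW_k} = L_{bW_k} L_{aW_k}\, R_{ab}$ for all $k$. No such identity holds in this model. The unfused weights of Figure~\ref{fig:monochrome_weights} are not uniform in the boson type --- for instance, the spectral parameter $z$ enters a type-1 vertex only when $c = c_1$, and enters a type-2 vertex only when the traversing color equals $c$ --- so the intertwiner for a single monochrome column must depend on which boson $(c,d)_k$ labels that column. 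The paper's Lemma~\ref{prop:mono_ybe} and Figure~\ref{fig:mono_rweights} make this precise: there is a whole family of unfused $R$-vertices indexed by $(c,d)$, with weights (such as $(1-t)z_i^2/z_j$, and cases conditioned on $c = c_1$ or $x = d$) that coincide with no entry of Figure~\ref{fig:rweights}, and the unfused Yang--Baxter equation of Figure~\ref{fig:mono_ybe} has an ``exchange'' form: the $R$-vertex of type $(c,d)_i$ enters on one side and the $R$-vertex of type $(c,d)_{i+1}$ exits on the other.

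Consequently your induction needs restating: each application of the base case does not commute a fixed $R_{ab}$ past a pair, it transforms $R^{(i)}$ into $R^{(i+1)}$. The fused equation then closes up only because of a structural fact you never invoke --- the monochrome ordering of Definition~\ref{def:mono_ordering} is cyclic of period $rs$, so after sweeping all $rs$ columns the $R$-vertex type returns to $(c,d)_1$, and the fused $R$-vertex of Figure~\ref{fig:rweights} is by definition the unfused one at $(c,d)_1$ (this is the content of Lemma~\ref{lem:ybe_train}). Without introducing the intermediate $R$-vertices and this cyclicity, the verification you plan would fail at the first monochrome column whose boson is not $(c,d)_1$. Separately, note that the paper's base case is settled by a finite computer check ($4$ colors, $4$ dolors, occupation numbers with $|n-m|\le 2$) rather than a hand case analysis; your plan to do it by hand with $n$ as a formal parameter is feasible in principle, but the case count is substantially larger than your sketch suggests, precisely because the cases are indexed by the boson type of the column as well as by the horizontal spins.
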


\begin{figure}[ht!]
	\begin{center}
		 \input{figures/rweights}
	\end{center}
	\caption{$R$-vertex weights for the (fused) lattice model. Here, $a,b$ can be either colors or dolors, $z_i,z_j \in \C$.}
	\label{fig:rweights}
\end{figure}

\begin{corollary}\label{thm:ybe}
The model is solvable.
\end{corollary}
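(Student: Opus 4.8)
The plan is to read off the corollary directly from the already-established Theorem~\ref{thm:ybe_with_weights}. At the opening of this section solvability was \emph{defined} to mean exactly that the model satisfies the Yang--Baxter equation of Figure~\ref{fig:ybe}: for every fixed assignment of the boundary spins $\alpha,\beta,\delta,\rho$ on horizontal edges and $\gamma,\theta$ on vertical edges, the two configurations in that figure have equal partition functions. Theorem~\ref{thm:ybe_with_weights} asserts precisely that this equality holds when the interior vertices $v,w$ carry the fused weights of Section~\ref{sec:fused_model} and $r$ carries the $R$-vertex weights of Figure~\ref{fig:rweights}. Hence the entire proof is a single step: invoke Theorem~\ref{thm:ybe_with_weights} and unwind the definition of solvability to obtain the corollary.

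Because the corollary carries no content beyond this definitional bookkeeping, the substance of solvability is entirely contained in Theorem~\ref{thm:ybe_with_weights}, and so the only genuine obstacle lies there rather than in the corollary itself. I would organize the proof of that theorem as a finite verification over boundary data: for each choice of the six boundary spins one expands both sides of Figure~\ref{fig:ybe} as a sum over the admissible labelings of the three internal edges, weights each term through the fusion process from Figure~\ref{fig:monochrome_weights} together with Figure~\ref{fig:rweights}, and checks that the two resulting Laurent polynomials in $z_i,z_j,t$ coincide. The difficulty is the size of this analysis, since the horizontal spins range over all of $\c\cup\d$ and the vertical spins over $\N^{rs}$, so a naive enumeration is unwieldy.

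To tame the case analysis I would first reduce to the unfused model, where each vertical spin is a single nonnegative integer, and then group the cases according to whether the two interacting strands carry colors, dolors, or one of each; the three blocks of the $R$-matrix in Figure~\ref{fig:rweights} are organized along exactly these lines, which indicates the correct bookkeeping. The color-merging results of Section~\ref{sec:llp} may further allow one to deduce the purely monochromatic cases from the known Yang--Baxter equations for the colored and uncolored bosonic models of \cite{BN,Ko}, leaving only the genuinely bicolored interactions to be checked directly.
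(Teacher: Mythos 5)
Your proposal is correct and matches the paper exactly: the corollary is purely definitional, following immediately from Theorem~\ref{thm:ybe_with_weights} since solvability was defined at the start of Section~\ref{sec:solvable} to mean satisfaction of the Yang--Baxter equation in Figure~\ref{fig:ybe}. Your sketch of how the underlying theorem would be proved also parallels the paper's actual route (reduction to the unfused model via the train argument of Lemma~\ref{lem:ybe_train}, then a finite case verification of the unfused equation, which the paper carries out by computer in Lemma~\ref{prop:mono_ybe}), so no genuinely different approach or gap is present in what the corollary itself requires.
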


For the proof of Theorem~\ref{thm:ybe_with_weights}, we will actually return to the unfused model. Recall that a fused vertex is constructed by the ``fusion'' of many unfused vertices, as described in Section~\ref{sec:fused_model}. If we have a Yang-Baxter equation for the unfused model, we may use a tool called the \emph{train argument} to show that the fused Yang-Baxter equation is satisfied. The following two results will prove Theorem~\ref{thm:ybe_with_weights}.

\begin{lemma}\label{prop:mono_ybe}
	The unfused Yang-Baxter equation in Figure~\ref{fig:mono_ybe} is satisfied by the weights for unfused vertices in Figure~\ref{fig:monochrome_weights} and unfused $R$-vertices in Figure~\ref{fig:mono_rweights}.
\end{lemma}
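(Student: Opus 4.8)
The plan is to verify the unfused Yang--Baxter equation of Figure~\ref{fig:mono_ybe} directly, as a check that the two pictures have equal partition functions for every admissible boundary; conservation laws will reduce this to a finite and organized case analysis. First I would fix the six boundary spins: the horizontal spins $\alpha,\beta,\delta,\rho \in \c\cup\d$ and the vertical boson counts $\gamma,\theta \in \N$ at the two ends of the quantum line, which carries bosons of a single type $(c,d)$. For each such choice, both pictures are sums over their internal edge labels of products of Boltzmann weights from Figure~\ref{fig:monochrome_weights} and Figure~\ref{fig:mono_rweights}, and the goal is to show these two sums agree as polynomials in $z_i,z_j,t$.

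Before enumerating, I would exploit the conservation laws obeyed by every admissible unfused vertex. Each color and each dolor is separately conserved, and the only particles that can pass between the horizontal auxiliary lines and the vertical quantum line are the color $c$ and the dolor $d$ of the vertex type; every other horizontal particle travels straight through. Reading the quantum line from top to bottom, its boson count rises by one at each pairing vertex (fourth type of Figure~\ref{fig:monochrome_weights}) and drops by one at each creation vertex (third type), so the net number of such turning vertices is pinned by $\theta-\gamma$. Together these constraints force both partition functions to vanish unless the boundary data are compatible, and otherwise leave only a handful of admissible internal states on each side, each side contributing a short sum of monomials.

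I would then split the verification according to which particles occupy the boundary. When none of $\alpha,\beta,\delta,\rho$ is the color $c$ or the dolor $d$, no turning vertices can occur: only the straight-through vertices (first and second types) and the R-vertex crossings appear, and the check parallels the Yang--Baxter relation for the colored bosonic model of \cite{BN}. The genuinely new content lies in the cases where the boundary includes $c$ and $d$, so that the creation vertex, of weight $\tfrac{1-t^{n+1}}{1-t}$, and the pairing vertex, of weight $(1-t)z$, enter; here the two sides of the equation route the color and the dolor past one another in opposite orders. In each such case I would list the two or three admissible internal states explicitly and verify the polynomial identity, using that the ordering conditions in Figure~\ref{fig:mono_rweights} (whether $a<b$ or $a>b$, and which of $a,b$ is a color) are exactly what make the monomials match.

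The main obstacle is the bookkeeping of the boson count $n$ in these turning cases. Because a creation vertex shifts the vertical count by one and carries the $n$-dependent factor $\tfrac{1-t^{n+1}}{1-t}$ while straight-through vertices contribute powers $t^n$, the two sides present sums of products of such factors together with the spectral weights $z_i-tz_j$, $t(z_i-z_j)$, and $z_i-z_j$ from Figure~\ref{fig:mono_rweights}. The subtlety is that the count increment happens before the turning particle reaches the R-vertex on one side and after it on the other, so the powers of $t$ attached to corresponding terms differ; showing that these geometric-series factors telescope against the spectral factors to yield matching polynomials for all $n\ge 0$ is the crux. Once the turning cases are settled, the remaining cases follow as above, completing the proof.
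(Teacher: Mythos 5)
Your proposal is a plan for a verification, but it never performs the verification, and for this lemma the verification \emph{is} the entire content: there is no structural idea to be supplied beyond organizing and then actually checking the cases. The paper handles this by first bounding the problem (the pairs $(c,d)_i$ and $(c,d)_{i+1}$ involve at most two colors and two dolors; the other boundary edges add at most two more of each, and only relative order matters, so $4$ colors and $4$ dolors suffice; single-occupancy of horizontal edges forces $|n-m|\le 2$) and then checking the resulting finite list of identities by computer, with $n$ treated symbolically. You correctly identify the same reductions in spirit (conservation laws, the count $\theta-\gamma$ pinning the number of turning vertices, the $n$-dependence through $t^n$ and $\tfrac{1-t^{n+1}}{1-t}$ being the crux), but phrases like ``I would list the two or three admissible internal states explicitly and verify the polynomial identity'' and ``showing that these geometric-series factors telescope \dots is the crux'' defer exactly the step that constitutes the proof. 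As written, the proposal establishes nothing that was not already evident from the statement.

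There is also a substantive oversight in the setup. In Figure~\ref{fig:mono_ybe} the $R$-vertex on the left-hand side is attached to the boson pair $(c,d)_i$ while the one on the right-hand side is attached to the successor pair $(c,d)_{i+1}$; the two sides do \emph{not} carry the same $R$-matrix. This matters because the unfused $R$-weights in Figure~\ref{fig:mono_rweights} genuinely depend on the associated pair $(c,d)$: beyond the crossing weights $z_i-tz_j$, $t(z_i-z_j)$, $z_i-z_j$ that you cite, there are turning configurations with weights $(1-t)z_i$, $(1-t)z_j$, and even $(1-t)z_i^2/z_j$, $(1-t)z_j^2/z_i$, whose case conditions reference $c$, $d$, and $c_1$ (e.g.\ ``$x=d$, $c=c_1$'' versus ``$x=d$, $c>c_1$''). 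Your case analysis, which treats the $R$-vertex as fixed across the equation and engages only the ordering conditions among the horizontal spins, would be checking a different identity from the one asserted, and that identity is not the one the train argument of Lemma~\ref{lem:ybe_train} requires (the whole point of the index shift is that the $R$-vertex label increments each time it slides past a monochrome column). To repair the proposal you would need to (i) incorporate the $(c,d)_i$ versus $(c,d)_{i+1}$ asymmetry and the full table of $R$-weights into the case structure, and (ii) actually carry out the resulting polynomial checks, symbolically in $t^n$, or else concede the point and do them by computer as the paper does.
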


\begin{figure}[ht!]
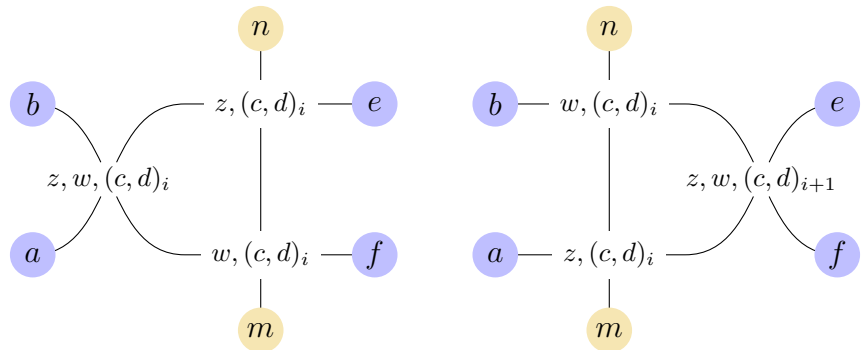

	\begin{center}
		\input{figures/monolhs}
		\;\;\;\;\;
		\input{figures/monorhs}
	\end{center}
	\caption{The Yang-Baxter equation for unfused vertices is satisfied if for all choices of boundary conditions $a,b,d,f \in \c \cup \d$, $n,m \in \N$, $(c,d)\in \c\times\d$, and $z,w\in \C$ the two systems above have the same partition function.}
	\label{fig:mono_ybe}
\end{figure}

\begin{lemma}\label{lem:ybe_train}
    If the unfused Yang-Baxter equation is satisfied, then the fused Yang-Baxter equation is satisfied with $R$-vertex given by the unfused $R$-vertex restricted to the pair $(c,d)_1$.
\end{lemma}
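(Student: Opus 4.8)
The plan is to prove this by the standard \emph{train argument}, propagating the unfused Yang--Baxter equation of Lemma~\ref{prop:mono_ybe} through the two trains of unfused vertices that comprise the fused vertices $w$ and $v$ of Figure~\ref{fig:ybe}. The first step is to unfuse the fused Yang--Baxter equation completely. By the construction in Figure~\ref{fig:vertex_fusion}, each ordinary fused vertex is a horizontal train of $rs$ unfused vertices whose $k$-th member carries the boson $(c,d)_k$ in the monochrome order of Definition~\ref{def:mono_ordering}, arranged left to right in increasing order. Thus the single fused vertical edge shared by $w$ and $v$ resolves into $rs$ parallel strands, and strand $k$ meets, from bottom to top, the $k$-th unfused vertex of $w$'s train and the $k$-th unfused vertex of $v$'s train. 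Reading the left-hand side of Figure~\ref{fig:ybe} from left to right, we therefore see the proposed fused $R$-vertex --- the unfused $R$-vertex of Figure~\ref{fig:mono_rweights} specialized to the minimal boson $(c,d)_1$ --- followed by $rs$ ``columns,'' where column $k$ is exactly the pair of stacked unfused vertices on strand $k$ that appears in the unfused Yang--Baxter equation of Figure~\ref{fig:mono_ybe}.

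The core of the argument is then to drag the $R$-vertex rightward through these columns one at a time, each move being a single application of the unfused Yang--Baxter equation. Because Figure~\ref{fig:mono_ybe} holds for all boundary data of the local three-vertex region, the move at column $k$ re-sums the configurations on the single internal edge of that region while leaving the surrounding configuration fixed; iterating and then summing over all remaining internal edges therefore preserves the global partition function. The key feature is that the unfused equation advances the boson type of the $R$-vertex from $(c,d)_k$ to $(c,d)_{k+1}$ as it crosses column $k$, which is precisely the type required to apply the equation at column $k+1$, while each ordinary vertex retains its original type. After $rs$ moves the $R$-vertex has traversed the entire train and sits on the right, matching the right-hand side of Figure~\ref{fig:ybe}; the cyclicity of the monochrome order, namely $(c_r,d_1)+1 = (c_1,d_s) = (c,d)_1$ in Definition~\ref{def:mono_ordering}, returns it to type $(c,d)_1$. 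Hence the same unfused $R$-vertex specialized to $(c,d)_1$ occurs on both sides, and the fused Yang--Baxter equation holds with this $R$-vertex.

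I expect the main obstacle to be the type bookkeeping and the closure of the telescoping: one must verify that the spatial order of bosons along the fused train coincides with the monochrome order, that the type emitted at column $k$ is exactly the type consumed at column $k+1$ (so that the hypotheses of Figure~\ref{fig:mono_ybe} are met at every stage and the spectral parameters $z_i,z_j$ of $w$ and $v$ are threaded correctly through each monochrome move), and that the cyclic wraparound at the last column returns the $R$-vertex to $(c,d)_1$, so that the emergent fused $R$-vertex agrees with the one we started with. The remaining point --- that the successive local resummations compose into equality of the two global partition functions --- is generic to any train argument and follows because every move is local and the unfused equation holds for arbitrary boundary conditions.
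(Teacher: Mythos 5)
Your proposal is correct and follows essentially the same route as the paper's proof: unfuse the two ordinary vertices into their trains of $rs$ monochrome vertices, drag the $R$-vertex across one column at a time via the unfused Yang--Baxter equation (whose statement advances the $R$-vertex's boson type from $(c,d)_k$ to $(c,d)_{k+1}$ while swapping the spectral parameters in the column it passes), and use the cyclic wraparound $(c_r,d_1)+1=(c,d)_1$ of the monochrome order so that after $rs$ moves the emergent $R$-vertex again carries $(c,d)_1$ and refusing recovers the right-hand side of Figure~\ref{fig:ybe}. The bookkeeping points you flag (type matching at each column and the telescoping closure) are exactly the observations the paper makes in its displayed intermediate systems, so there is no gap.
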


We prove Lemma~\ref{prop:mono_ybe} and Lemma~\ref{lem:ybe_train} in the following two sections, completing the proof of Theorem~\ref{thm:ybe_with_weights}.

\subsection{The unfused $R$-vertex and Yang-Baxter equation}
We define the unfused $R$-vertex, which connects four horizontal edges, depends on a color-dolor pair, and two spectral parameters in $\C$. The admissible unfused $R$-vertices and their weights are shown in Figure~\ref{fig:mono_rweights}

\begin{figure}[t!]
	\input{figures/rweightsmono}
	\caption{Unfused $R$-vertex weights.}
	\label{fig:mono_rweights}
\end{figure}

We now prove that the unfused weights satisfy the Yang-Baxter equation.

\begin{proof}[Proof of Lemma~\ref{prop:mono_ybe}]
	Consider the unfused Yang-Baxter equation in Figure~\ref{fig:mono_ybe}. We need at most two dolors and two colors for the pairs $(c,d)_{i}$ and $(c,d)_{i+1}$. The yellow vertical edges carry $n$ and $m$ bosons of type $(c,d)_i$. Since the capacity of horizontal edges is limited to one particle, $n$ and $m$ cannot differ by more than $2$. For the remaining boundary (and interior edges), there are at most two additional colors or dolors that can appear. All possibilities are covered if we consider these systems with 4 colors and 4 dolors. We checked the Yang-Baxter equation for $4$ colors and $4$ dolors, and $m \in [n-2,n+2]$ using a computer and so the Yang-Baxter equation in Figure~\ref{fig:mono_ybe} is proved.
\end{proof}

\subsection{The $R$-vertex satisfies the Yang-Baxter equation}
The $R$-vertex meets four horizontal edges, and has two spectral parameters. The admissible $R$-vertices for the (fused) model and their Boltzmann weights are given in Figure~\ref{fig:rweights}. 

Finally we show that these fused weights satisfy the Yang-Baxter equation, completing the proof that the model is solvable. Note that the admissible fused $R$-vertices and their weights agree exactly with the unfused $R$-vertices taking $(c,d) = (c,d)_1$ in the monochrome ordering of bosons given in Defintion~\ref{def:mono_ordering}. So, it is sufficient to show Lemma~\ref{lem:ybe_train}.

\begin{proof}[Proof of Lemma~\ref{lem:ybe_train}]
We will use the train argument to show that the fused Yang-Baxter equation is true. Consider the left hand image in Figure~\ref{fig:ybe}. Choose horizontal spins $\alpha,\beta,\delta,\rho \in \Sigma_h$, vertical spins $\gamma,\theta \in \Sigma_v$, and row parameters $z,w\in \C$. Recall that $\Sigma_v = \N^{rs}$, so we write $\gamma = (n_1,\dots,n_{rs})$ and $\theta = (m_1,\dots,m_{rs})$. We can visually expand the fused vertices $v,w$ into their constituent unfused vertices,
\[\input{figures/lhsybetrain}.\]
Now, since the fused $R$-vertex on the left agrees with the unfused $R$-vertex associated to the boson $(c,d)_1$, we can say that the following system has the same partition function by the unfused Yang-Baxter equation
\[
\input{figures/lhsybetrain2}.
\]
We see that in fact we can repeatedly apply the unfused Yang-Baxter equation, eventually yielding the following system
\[
\input{figures/rhsybetrain}.
\]
Applying the fusion process to the ordinary vertices and noting again that the fused $R$-vertex agrees with the unfused $R$-vertex for the boson $(c,d)_1$, we see that this system is the right hand image in Figure~\ref{fig:ybe}. We have shown that the left and right sides of the Yang-Baxter equation in  Figure~\ref{fig:ybe} have the same partition function, so we are done.
\end{proof}

\section{The partition function is determined by a recurrence relation}\label{sec:recurrence}

Recall that the partition function is a central object in the study of lattice models, as it often is the source of connections and applications to other fields. So far we have defined the partition function of a system $\Si{}$ as defined in Definition~\ref{def:boundary_conditions} as 
\[
Z(\Si{}) = \sum_{s\in\Si{}} \wt(s).
\]

We will show Theorem~\ref{thm:recurrence_determines_pt} that we can compute the partition function using a four-term recurrence relation. We will use the notation $Z(\wi{1},\wi{2};\z) = Z(\S(\lam, \wi{1},\wi{2},\wi{3},\wi{4}, \z)$ as shorthand for the partition function when all other boundary conditions are fixed.

\begin{theorem}\label{thm:recurrence_determines_pt}
    Let $\lam$ be a partition, $\wi{1},\wi{2},\wi{3},\wi{4}\in S_r$, and $\z\in \C^r$ be boundary conditions for a system of the lattice model, and $s_i \in S_r$ a simple reflection swapping $(i,i+1)$. Then the partition function satisfies the recurrence relation
    \begin{equation}\label{eq:recurrence}
        \begin{aligned}
        \begin{cases}
            (1-t)\,z_i\, Z(\wi{1},\wi{2};\z) \,+\, (z_i - z_{i+1} )\,Z(\wi{1},s_i\wi{2};\z) & \text{if } s_iw_2 < w_2 \\ 
            (1-t)\,z_{i+1}\, Z(\wi{1},\wi{2};\z) \,+\, t\,(z_i - z_{i+1} )\,Z(\wi{1},s_i\wi{2};\z) & \text{if } s_iw_2 > w_2  
        \end{cases} \;\;\;\;\;\;\;\;\; \\
        = \begin{cases}
            (1-t)\,z_i\, Z(\wi{1},\wi{2};s_i\z) \,+\, (z_i - z_{i+1} )\,Z(s_i\wi{1},\wi{2};s_i\z) & \text{if } s_iw_1 < w_1 \\ 
            (1-t)\,z_{i+1}\, Z(\wi{1},\wi{2};s_i\z) \,+\, t\,(z_i - z_{i+1} )\,Z(s_i\wi{1},\wi{2};s_i\z) & \text{if } s_iw_1 > w_1. 
        \end{cases}
        \end{aligned}
    \end{equation}
    
    Together with knowledge of monostatic systems and systems with no states, this recurrence completely determines the partition function of any system.
\end{theorem}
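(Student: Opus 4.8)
I would prove the two halves of the statement in turn: first the four-term identity \eqref{eq:recurrence}, which is a direct consequence of solvability (Corollary~\ref{thm:ybe}), and then the claim that this identity together with the base cases of Section~\ref{sec:monostatic_systems} pins down every partition function. The first half is a railroad (``train'') argument; the second is an induction along the Bruhat order whose only delicate point is the spectral-parameter swap built into the recurrence.

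\textbf{The recurrence via a train argument.} To obtain \eqref{eq:recurrence} I would attach a single fused $R$-vertex $R(z_i,z_{i+1})$ to the left of the two adjacent rows $i,i+1$ of $\S(\lam,\wi{1},\wi{2},\wi{3},\wi{4},\z)$ and compute the partition function of the enlarged lattice in two ways. With the $R$-vertex pressed against the left boundary, summing over its two admissible configurations against the fixed dolors $\wi{2}\d$ in rows $i,i+1$ leaves those dolors in place or interchanges them, producing $Z(\wi{1},\wi{2};\z)$ and $Z(\wi{1},s_i\wi{2};\z)$; the coefficients are read directly from the ``turning'' and ``crossing'' blocks of Figure~\ref{fig:rweights}, and the case split is exactly whether $s_i\wi{2}<\wi{2}$ or $s_i\wi{2}>\wi{2}$, i.e.\ whether the two dolors enter in increasing or decreasing order. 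This is the left-hand side of \eqref{eq:recurrence}.

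\textbf{Sliding the $R$-vertex.} The second evaluation uses Theorem~\ref{thm:ybe_with_weights} to slide the $R$-vertex across the two rows to the right boundary, as in Figure~\ref{fig:ybe}. Because the $R$-vertex interchanges rows $i$ and $i+1$, this swaps $z_i\leftrightarrow z_{i+1}$, replacing $\z$ by $s_i\z$ throughout; against the right boundary the same two configurations now act on the colors $\wi{1}\c$, giving $Z(\wi{1},\wi{2};s_i\z)$ and $Z(s_i\wi{1},\wi{2};s_i\z)$ with coefficients again from Figure~\ref{fig:rweights} and case split $s_i\wi{1}\lessgtr\wi{1}$. Equating the two evaluations yields \eqref{eq:recurrence}. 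The only bookkeeping is to confirm, using Lemma~\ref{lem:ybe_train}, that the relevant fused weights are those attached to the boson $(c,d)_1$, and that the weights $(1-t)z_i,(1-t)z_{i+1}$ and $z_i-z_{i+1},\,t(z_i-z_{i+1})$ appear in the positions shown.

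\textbf{Determination, and the main obstacle.} For the second half I would induct on the Bruhat gap $g(\Si{})=\ell(\wc)-\ell(\wdd)$. The base cases come from Section~\ref{sec:monostatic_systems}: if $\wdd\not\leq\wc$ then $Z=0$ by Proposition~\ref{cor:states_dont_exist}, while if $\wc=\wdd$ the system is monostatic by Proposition~\ref{prop:mono_conditions} and $Z$ is the explicit Boltzmann weight of its unique state. For the inductive step, recall that replacing $\wi{2}$ by $s_i\wi{2}$ sends $\wdd\mapsto\wdd s_i$ and replacing $\wi{1}$ by $s_i\wi{1}$ sends $\wc\mapsto\wc s_i$; solving \eqref{eq:recurrence} for $Z(\wi{1},\wi{2};\z)$ expresses it through the three companions $Z(\wi{1},s_i\wi{2};\z)$, $Z(\wi{1},\wi{2};s_i\z)$ and $Z(s_i\wi{1},\wi{2};s_i\z)$, whose $(\wc,\wdd)$-data differ from that of $\Si{}$ by right multiplication of one factor by $s_i$, and any companion landing in the region $\wdd\not\leq\wc$ simply vanishes. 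The genuine difficulty is that the recurrence does \emph{not} lower $g$ in an obvious way: it couples $Z(\cdot;\z)$ to the equal-gap term $Z(\cdot;s_i\z)$, and a single simple reflection need not simultaneously raise $\wdd$ and lower $\wc$ — this stalls precisely when $\mathrm{Des}(\wc)\subseteq\mathrm{Des}(\wdd)$, so that every companion has gap $g$ or $g+1$. I expect this to be the crux, and I would resolve it by passing to the operator formulation promised in this section: rewriting \eqref{eq:recurrence} as an identity of Demazure--Lusztig (divided-difference) operators on the $\z$-dependence realizes the two $\z$-coupled terms as the image of a single invertible Hecke-algebra operator. Treating each $Z(\wi{1},\wi{2};\cdot)$ as a polynomial in $\z$ and inverting these operators then propagates the monostatic and no-state base data along reduced chains in the Bruhat order to every system, which is what establishes that the recurrence together with those base cases determines $Z$ completely.
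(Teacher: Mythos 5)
Your first half is correct and is the paper's own argument: attach an $R$-vertex to rows $i,i+1$ at the left boundary, expand the partition function over its two admissible configurations to get the left side of \eqref{eq:recurrence}, slide the $R$-vertex to the right boundary by the train argument (the paper's systems $\S_i^L$, $\S_i^R$ and Lemma~\ref{lem:sir}), and expand again to get the right side.

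The determination step, however, has a genuine gap, located exactly at what you call the crux. Your plan is to induct on the gap $\ell(\wc)-\ell(\wdd)$ and, when that stalls, to solve the equation coupling $Z(\wi{1},\wi{2};\z)$ to $Z(\wi{1},\wi{2};s_i\z)$ by inverting a Demazure--Lusztig operator. But the operators arising here are not invertible: as Corollary~\ref{cor:recursion_demazure} itself records, $(D^{(1)}_{i})^2=D^{(1)}_{i}$, $(D^{(2)}_{i})^2=D^{(2)}_{i}$, $(D^{(3)}_{i})^2=-D^{(3)}_{i}$, $(D^{(4)}_{i})^2=-D^{(4)}_{i}$, so they are projections up to sign, with large kernels, and there is nothing to invert. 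Concretely, take the case $s_i\wi{1}<\wi{1}$, $s_i\wi{2}<\wi{2}$ and suppose the two companions $Z(\wi{1},s_i\wi{2};\z)$ and $Z(s_i\wi{1},\wi{2};s_i\z)$ are known. Then \eqref{eq:recurrence} yields only the value of $(1-t)\,z_i\,\bigl(Z(\wi{1},\wi{2};\z)-Z(\wi{1},\wi{2};s_i\z)\bigr)$, which determines $Z(\wi{1},\wi{2};\z)$ only up to an arbitrary $s_i$-symmetric summand; and substituting $s_i\z$ for $\z$ in the identity reproduces the same information rather than an independent equation. The other three cases behave identically. So whenever the unknown pair sits in the $\z$-coupled position, the recurrence cannot be solved for it, and your inductive step cannot be completed as described.

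The fix --- and this is what the paper actually does --- is to change the induction so that the unknown never occupies the coupled position: induct on $\ell(\wc)$ over all $\wdd$ simultaneously (Lemma~\ref{lem:recurrence_bruhat}). To determine the pair $(u,v)=(\wdd,\wc)$ with $v\neq e$, pick a right descent $s_i$ of $v$ and apply \eqref{eq:recurrence} to the system whose pair is $(u,vs_i)$; its four terms carry the pairs $(u,vs_i)$, $(us_i,vs_i)$, $(u,vs_i)$, $(u,v)$. The first three all have second component $vs_i$ of smaller length, hence are known by induction (knowing a pair means knowing its partition function as a polynomial in $\z$, so evaluation at $s_i\z$ costs nothing), and the unknown appears exactly once, with scalar coefficient $(z_i-z_{i+1})$ or $t(z_i-z_{i+1})$, which is a nonzero polynomial, so one simply divides. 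This descends in $\ell(\wc)$ irrespective of the gap, so your stalled pairs (e.g.\ $\wdd=s_2$, $\wc=s_1s_2$, where $\mathrm{Des}(\wc)\subseteq\mathrm{Des}(\wdd)$) cause no difficulty. Replacing your gap induction and inversion step by this re-based induction makes the rest of your outline go through.
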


\newcommand{\di}[1]{D^{(#1)}_{i}}
    
\begin{corollary}
\label{cor:recursion_demazure}
    Define the following operators:
    \begin{align*}
        \di{1} &= (z_i- z_{i+1})^{-1}(z_i-z_is_i) \\
        \di{2} &= (z_i- z_{i+1})^{-1}(z_i-z_{i+1}s_i) \\
        \di{3} &= (z_i- z_{i+1})^{-1}(z_{i+1}-z_is_i) \\
        \di{4} &= (z_i- z_{i+1})^{-1}(z_{i+1}-z_{i+1}s_i).
    \end{align*}
    These satisfy the properties that 
    $(\di{1})^2 = \di{1}$, 
    $(\di{2})^2 = \di{2}$, 
    $(\di{3})^2 = -\di{3}$,  
    $(\di{4})^2 = -\di{4}$, and
    \[
    \di{1}\di{2} = \di{1}\di{3} = \di{2}\di{4} = \di{3}\di{1} = \di{4}\di{2} = \di{4}\di{3} = 0.
    \]
    Then the partition function satisfies
    \begin{align*}
        \di{1}Z(\wi{1},\wi{2};\z) &= (1-t)^{-1} (Z(s_i\wi{1},\wi{2};s_i\z) - Z(\wi{1},s_i\wi{2};\z)) & \textrm{if } s_i\wi{1} < \wi{1}, s_i\wi{2} < \wi{2}\\
        \di{2}Z(\wi{1},\wi{2};\z) &= (1-t)^{-1} (tZ(s_i\wi{1},\wi{2};s_i\z) - Z(\wi{1},s_i\wi{2};\z)) & \textrm{if } s_i\wi{1} > \wi{1}, s_i\wi{2} < \wi{2}\\
        \di{3}Z(\wi{1},\wi{2};\z) &= (1-t)^{-1} (Z(s_i\wi{1},\wi{2};s_i\z) - t Z(\wi{1},s_i\wi{2};\z)) & \textrm{if } s_i\wi{1} < \wi{1}, s_i\wi{2} > \wi{2}\\
        \di{4}Z(\wi{1},\wi{2};\z) &= (1-t)^{-1}(tZ(s_i\wi{1},\wi{2};s_i\z) - tZ(\wi{1},s_i\wi{2};\z)) & \textrm{if } s_i\wi{1} > \wi{1}, s_i\wi{2} > \wi{2}.\\
    \end{align*}
\end{corollary}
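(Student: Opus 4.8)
The statement splits into two independent claims: the algebraic relations among the operators $\di{1},\dots,\di{4}$, and the four identities obtained by applying them to the partition function. I would prove these separately.

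For the operator relations, the plan is to compute inside the algebra of operators on functions of $\z$ generated by multiplication by $z_i,z_{i+1}$ and the swap $s_i$, using only $s_i^2 = 1$ and $s_i z_i = z_{i+1} s_i$, $s_i z_{i+1} = z_i s_i$. The key is that the four operators fall into two transparent families. On one hand, $\di{1} = \tfrac{z_i}{z_i - z_{i+1}}(1 - s_i)$ and $\di{4} = \tfrac{z_{i+1}}{z_i - z_{i+1}}(1 - s_i)$ both contain the factor $(1 - s_i)$, so they annihilate every $s_i$-invariant function; writing $w_f = (z_i - z_{i+1})^{-1}(f - s_i f)$, which is $s_i$-invariant, one has $\di{1} f = z_i w_f$ and $\di{4} f = z_{i+1} w_f$. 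On the other hand, $\di{2}$ and $\di{3}$ send every $f$ to an $s_i$-invariant function, act on invariants as $+1$ and $-1$ respectively, and annihilate functions of the form $z_{i+1}\cdot(\text{invariant})$ and $z_i\cdot(\text{invariant})$ respectively. Granting these facts, all ten relations are immediate: the squares follow from $w_{z_i w_f} = w_f$ and $w_{z_{i+1} w_f} = -w_f$ together with the $\pm 1$ action of $\di{2},\di{3}$ on invariants, while each vanishing product matches an output type of the inner operator to a subspace killed by the outer one (for example $\di{3}\di{1} = 0$ because $\di{1} f = z_i w_f$ and $\di{3}$ kills $z_i\cdot(\text{invariant})$).

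For the partition-function identities, I would substitute the appropriate branches of the four-term recurrence of Theorem~\ref{thm:recurrence_determines_pt} into each of the four sign combinations of $s_i\wi{1}$ versus $\wi{1}$ and $s_i\wi{2}$ versus $\wi{2}$, and rearrange. Here $s_i$ acts on the spectral parameter, so $s_i Z(\wi{1},\wi{2};\z) = Z(\wi{1},\wi{2};s_i\z)$ and, for instance, $\di{1} Z(\wi{1},\wi{2};\z) = (z_i - z_{i+1})^{-1} z_i\bigl(Z(\wi{1},\wi{2};\z) - Z(\wi{1},\wi{2};s_i\z)\bigr)$. In the first case $s_i\wi{1} < \wi{1}$, $s_i\wi{2} < \wi{2}$ the recurrence takes its first branch on both sides; collecting the two $Z(\wi{1},\wi{2};\,\cdot\,)$ terms on the left and the remaining terms on the right yields
\[
(1-t)\,z_i\bigl(Z(\wi{1},\wi{2};\z) - Z(\wi{1},\wi{2};s_i\z)\bigr) = (z_i - z_{i+1})\bigl(Z(s_i\wi{1},\wi{2};s_i\z) - Z(\wi{1},s_i\wi{2};\z)\bigr),
\]
and dividing by $(1-t)(z_i - z_{i+1})$ identifies the left side as $\di{1} Z(\wi{1},\wi{2};\z)$, giving the first identity. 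The remaining three cases are structurally identical: the branch of the left-hand side of the recurrence is governed by $\wi{2}$ and that of the right-hand side by $\wi{1}$, so the four sign combinations select exactly $\di{1},\dots,\di{4}$, and the factors $t$ appearing in the ``greater-than'' branches reproduce the $t$'s in the statements for $\di{2},\di{3},\di{4}$.

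Every step is routine algebra; the only real work is bookkeeping. The points demanding care are fixing the convention that $z_i s_i$ means multiplication by $z_i$ applied after the swap $s_i$, keeping the action of $s_i$ on $\z$ cleanly separate from its action on the permutations $\wi{1},\wi{2}$, and matching each $(\wi{1},\wi{2})$ sign combination to the correct pair of branches of the piecewise recurrence so that the coefficients $1$ and $t$ attach to the correct terms. There is no analytic or combinatorial obstacle beyond this case-matching.
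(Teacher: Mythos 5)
Your proposal is correct and follows essentially the same route as the paper: the paper disposes of this corollary in one line, saying it ``can be easily verified using the recursion \eqref{eq:recurrence} and noticing that $Z(\wi{1},\wi{2};\z) = s_iZ(\wi{1},\wi{2};s_i\z)$,'' which is exactly your case-by-case substitution of the recurrence branches, with the correct convention that $z_i s_i$ denotes the swap followed by multiplication. Your verification of the operator identities (via the decomposition into the $(1-s_i)$-family $\di{1},\di{4}$ and the invariant-valued family $\di{2},\di{3}$) is a clean write-up of the algebra the paper asserts without proof, and all four partition-function identities and all ten operator relations check out under your argument.
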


The goal of this section is to prove Theorem~\ref{thm:recurrence_determines_pt}. First we will show that the partition function satisfies a four-term recurrence relation. Then, we will show that knowledge of the partition functions of the monostatic systems and systems with no states from Section~\ref{sec:monostatic_systems} serve as base cases for the relation. Finally we will show that these data fully determine the partition function of any system of the model.

Then Corollary~\ref{cor:recursion_demazure} can be easily verified using the recursion \eqref{eq:recurrence} and noticing that $Z(\wi{1},\wi{2};\z) = s_iZ(\wi{1},\wi{2};s_i\z)$. This result expresses the recursion as divided difference operators on the partition function. As we will see in Section~\ref{sec:llp}, the bicolored models generalize certain colored bosonic models whose partition functions are shown to satisfy Demazure relations (Proposition~4.5 of \cite{BN}), drawing connections with $p$-adic representation theory. When there is exactly one dolor, the relations on the bicolored models agree with those on the colored models. These divided difference operators on the partition functions of the bicolored models hint at a more general connection with the representation theory of $p$-adic
groups.

\subsection{The recurrence relation} 
First we show that the partition function satisfies the recurrence relation \eqref{eq:recurrence}.

\begin{proposition}\label{prop:pt_satisfies_recurrence}
    The recurrence relation \eqref{eq:recurrence} in Theorem~\ref{thm:recurrence_determines_pt} is satisfied.
\end{proposition}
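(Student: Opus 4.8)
The plan is to prove the recurrence by a \emph{train argument} built on the Yang--Baxter equation of Theorem~\ref{thm:ybe_with_weights}. I attach an $R$-vertex with spectral parameters $(z_i,z_{i+1})$ to the pair of rows $i$ and $i+1$ and use solvability to slide it from one vertical boundary of the grid to the other without changing the partition function. Evaluating the configuration against the left boundary will produce the left-hand side of \eqref{eq:recurrence}, and evaluating it against the right boundary will produce the right-hand side; since the two differ only by the position of the $R$-vertex, the Yang--Baxter equation forces them to be equal.

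First I would set up the attachment. Since the $R$-vertex meets only horizontal edges, it interacts exactly with the dolors $\wi{2}\d$ on the left boundary and the colors $\wi{1}\c$ on the right boundary in rows $i$ and $i+1$, leaving $\wi{3},\wi{4},\lam$ and all vertical (boson) data untouched. Because the two left-boundary dolors occupying rows $i$ and $i+1$ are distinct particles, only two of the admissible configurations in Figure~\ref{fig:rweights} can occur against them: the \emph{turning} configuration (third block of the table), which leaves the dolor ordering fixed and hence keeps the system $\S(\lam,\wi{1},\wi{2},\wi{3},\wi{4},\z)$, and the \emph{crossing} configuration (second block), which interchanges the two dolors and hence replaces $\wi{2}$ by $s_i\wi{2}$. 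The corresponding $R$-weights are $(1-t)z_i$ or $(1-t)z_{i+1}$ for the turning term and $z_i-z_{i+1}$ or $t(z_i-z_{i+1})$ for the crossing term, and which sub-case applies is governed by the relative order of the two dolors in rows $i,i+1$, that is, by whether $s_i\wi{2}<\wi{2}$ or $s_i\wi{2}>\wi{2}$. Reading the table (the two-dolor rows) then reproduces exactly the two branches of the left-hand side of \eqref{eq:recurrence}.

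The right-hand side is obtained in the same way after sliding the $R$-vertex across to the color boundary. The one genuinely new feature is that transporting the $R$-vertex through the grid braids the two rows and thereby swaps their spectral parameters; this is why the color-boundary terms carry the argument $s_i\z$ rather than $\z$, while the turning term keeps $\wi{1}$ and the crossing term produces $s_i\wi{1}$. Matching the weights again against Figure~\ref{fig:rweights}, now in the two-color rows and noting that the crossing weights for two colors are interchanged relative to two dolors, yields the two branches indexed by $s_i\wi{1}<\wi{1}$ versus $s_i\wi{1}>\wi{1}$.

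The main obstacle, and essentially the only nonroutine part, is the careful bookkeeping in this weight-matching. Colors exit on the right and dolors on the left, so the two boundaries feed the $R$-vertex with opposite orientations; combined with the induced spectral-parameter swap $\z\mapsto s_i\z$, one must verify that the entries of Figure~\ref{fig:rweights} assemble into precisely the coefficients $(1-t)z_i$, $(1-t)z_{i+1}$, $z_i-z_{i+1}$, and $t(z_i-z_{i+1})$ of \eqref{eq:recurrence}, and that the length conditions $s_i\wi{1}<\wi{1}$ and $s_i\wi{2}<\wi{2}$ translate correctly into the $a<b$ versus $a>b$ cases of the table. Once this dictionary is pinned down, the Yang--Baxter equation of Theorem~\ref{thm:ybe_with_weights} equates the left- and right-boundary evaluations, and the recurrence \eqref{eq:recurrence} follows at once.
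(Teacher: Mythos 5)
Your proposal is correct and follows essentially the same route as the paper: the paper formalizes your ``slide the $R$-vertex across the grid'' step as a separate lemma (its Lemma~\ref{lem:sir}, proved by the train argument), defines the two auxiliary systems $\S_i^L$ and $\S_i^R$ with the $R$-vertex pinned at the left and right boundaries respectively (the latter carrying the swapped parameters $s_i\z$, exactly as you predict), and then decomposes each partition function over the two admissible $R$-vertex configurations---dolors crossing or not on the left, colors crossing or not on the right---before substituting the weights from Figure~\ref{fig:rweights}. Your identification of the turning weights $(1-t)z_i$, $(1-t)z_{i+1}$ and crossing weights $z_i-z_{i+1}$, $t(z_i-z_{i+1})$, and of how the branches correspond to $s_i\wi{2}\lessgtr\wi{2}$ and $s_i\wi{1}\lessgtr\wi{1}$, matches the paper's weight bookkeeping.
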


Let $\lam, \wi{1},\wi{2},\wi{3},\wi{4}, \z$ as in Definition~\ref{def:boundary_conditions}. Consider the grid in Figure~\ref{fig:recurrence_grid}; this is the standard grid for the vertex model, with one modification being the addition of an $R$-vertex on the left boundary at rows $1$ and $2$. We may label the boundary edges of this grid according to the rules specified in Definition~\ref{def:boundary_conditions}: $\wi{1}$ specifies the right boundary edges, $\wi{2}$ specifies the left boundary edges, $\lam,\wi{3},\wi{4}$ specify the top boundary, and the row parameter $\z$ labels the rows as before. We call this system $\S_1^L(\lam, \wi{1},\wi{2},\wi{3},\wi{4}, \z)$. 

\begin{figure}[ht!]
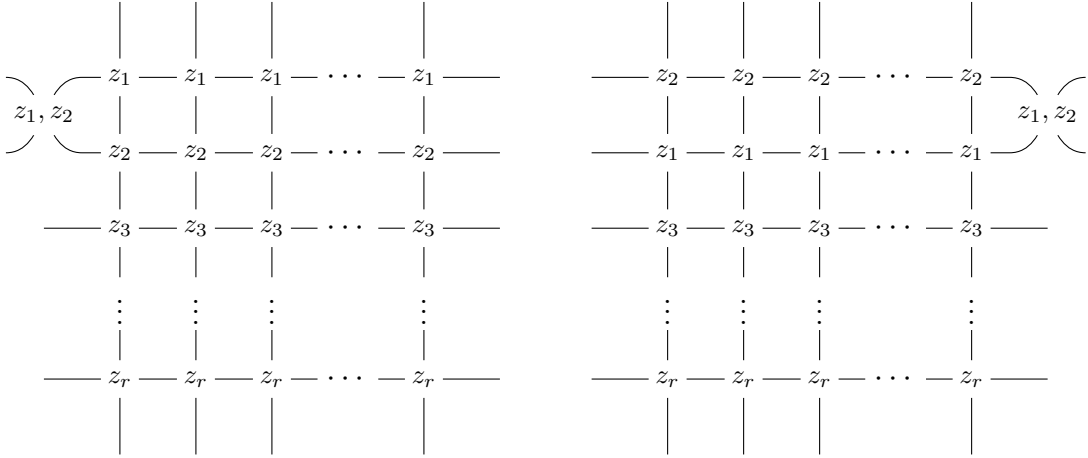

    \centering
    \input{figures/recurrencetrain1}\;\;\;\;\;\;\;\;
    \input{figures/reecurrencetrain2}
    \caption{The grids for the systems $\S_1^L$ (left) and $S_1^{R}$ (right).}
    \label{fig:recurrence_grid}
\end{figure}

In general we define the system $\S_i^{L}(\lam, \wi{1},\wi{2},\wi{3},\wi{4}, \z)$ by taking a grid of appropriate dimensions $\numrows \times \numcols$, adding an $R$-vertex connecting the left two boundary edges on rows $i,i+1$, and then labelling the boundary edges of the modified grid by $\lam, \wi{1},\wi{2},\wi{3},\wi{4}, \z$ as in Definition~\ref{def:boundary_conditions}. Similarly, we define the system $\S_i^{R}(\lam, \wi{1},\wi{2},\wi{3},\wi{4}, \z)$ by starting with a grid of size $\numrows \times \numcols$, attach an $R$-vertex connecting the right boundary edges in rows $i,i+1$, and label the boundaries on this modified grid by $\lam, \wi{1},\wi{2},\wi{3},\wi{4},s_i\z$ as in Definition~\ref{def:boundary_conditions}. See Figure~\ref{fig:recurrence_grid} for the example of the grids for $\S_1^L$ and $\S_1^R$. 

Note that in the $\S_i^R$ compared to $\S_i^L$, the rows labelled by $z_i,z_{i+1}$ are apparently swapped. We will see that this choice allows us to relate their partition functions. 
\begin{lemma}\label{lem:sir}
    For fixed boundary data $(\lam, \wi{1},\wi{2},\wi{3},\wi{4}, \z)$ and $i\in \{1,\cdots, r-1\}$,
    \[
        Z(\S_i^L(\lam, \wi{1},\wi{2},\wi{3},\wi{4}, \z)) = Z(S_i^R(\lam, \wi{1},\wi{2},\wi{3},\wi{4}, \z)).
    \]
\end{lemma}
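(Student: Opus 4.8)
The plan is to prove this by a \emph{train argument}, identical in spirit to the proof of Lemma~\ref{lem:ybe_train} but carried out across an entire pair of adjacent rows of the grid rather than across the constituent rows of a single fused column. The only tool needed is the fused Yang--Baxter equation of Theorem~\ref{thm:ybe_with_weights}, which asserts that an $R$-vertex with spectral parameters $(z_i,z_{i+1})$ may be commuted past a single column consisting of two ordinary fused vertices (one in row $i$, one in row $i+1$) without changing the partition function. The two pictures of Figure~\ref{fig:ybe} are precisely the ``before'' and ``after'' of this commutation: on the left the $R$-vertex sits to the left of the column, and on the right it sits to its right, with the spectral parameters of the two ordinary vertices interchanged.

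First I would localize the move. Beginning with $\S_i^L$, focus on the $R$-vertex together with the leftmost column of the grid in rows $i$ and $i+1$; this three-vertex gadget is exactly the left-hand configuration of Figure~\ref{fig:ybe}, with the four horizontal edges and the two vertical edges (which connect upward to row $i-1$ and downward to row $i+2$) playing the roles of $\alpha,\beta,\delta,\rho$ and $\gamma,\theta$. Holding these six external edges fixed and summing over the internal edges, Theorem~\ref{thm:ybe_with_weights} equates this local contribution with that of the right-hand configuration, in which the $R$-vertex has moved one column to the right and the two ordinary vertices have swapped their parameters $z_i \leftrightarrow z_{i+1}$. Since this is an identity of local partition functions with the gadget's external edges held fixed, substituting it inside the full sum over internal states of $\S_i^L$ leaves $Z(\S_i^L)$ unchanged.

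Next I would iterate the move column by column, sliding the $R$-vertex rightward through all $\numcols$ columns. Each application preserves the partition function and leaves behind a column whose row-$i$ and row-$(i+1)$ vertices now carry the exchanged parameters. After the $R$-vertex has passed the last column it is attached to the right boundary edges in rows $i$ and $i+1$, and every vertex in those two rows carries $z_{i+1},z_i$ in place of $z_i,z_{i+1}$; that is, the grid is now labelled by $s_i\z$. Comparing with the definitions of $\S_i^L$ and $\S_i^R$ illustrated in Figure~\ref{fig:recurrence_grid}, this final configuration is exactly $\S_i^R(\lam,\wi{1},\wi{2},\wi{3},\wi{4},\z)$, whose grid carries the labelling $s_i\z$ by convention. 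Hence $Z(\S_i^L) = Z(\S_i^R)$.

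Since the Yang--Baxter equation has already been established in Theorem~\ref{thm:ybe_with_weights}, there is no genuinely hard analytic step here; the main thing to be careful about is the bookkeeping. Concretely, one must verify (i) that promoting the local gadget identity to the global partition function identity is legitimate, which is the standard observation that the YBE holds for \emph{all} boundary conditions of the gadget, including the vertical edges $\gamma,\theta$ that couple rows $i,i+1$ to the rest of the grid, and (ii) that the cumulative effect of the $\numcols$ swaps on the spectral parameters of rows $i$ and $i+1$ is exactly the transposition $s_i$, matching the labelling $s_i\z$ used to define $\S_i^R$. Both are routine once the single-column move is in hand.
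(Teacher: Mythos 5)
Your proof is correct and follows essentially the same route as the paper: a train argument that iterates the fused Yang--Baxter equation of Theorem~\ref{thm:ybe_with_weights} to slide the $R$-vertex column by column from the left boundary to the right, swapping $z_i \leftrightarrow z_{i+1}$ in its wake, so that after $\numcols$ moves the configuration is exactly $\S_i^R$ with its $s_i\z$ labelling. Your added bookkeeping (localizing the gadget with fixed external edges and verifying the cumulative parameter swap) is a more explicit rendering of what the paper leaves implicit, but it is the same argument.
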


\begin{proof}[Proof of Lemma~\ref{lem:sir}]
    This result follows from the train argument, similar to our proof of the fused Yang-Baxter equation in Lemma~\ref{lem:ybe_train}. Starting with $\S_i^L(\lam, \wi{1},\wi{2},\wi{3},\wi{4}, \z)$, we repeatedly apply the Yang-Baxter equation to ``slide'' the $R$-vertex across each column. This process yields a sequence of systems with an $R$-vertex each in a different location, and swapping the parameters $z_i,z_{i+1}$ in the wake of the $R$-vertex to its left. Each of the systems in this sequence have the same partition function. After applying the Yang-Baxter equation $N$ times, we are left with the system $S_i^R(\lam, \wi{1},\wi{2},\wi{3},\wi{4}, \z)$. 
\end{proof}

Now we will prove Proposition~\ref{prop:pt_satisfies_recurrence}.

\begin{proof}[Proof of Proposition~\ref{prop:pt_satisfies_recurrence}.]
    Let $\lam, \wi{1},\wi{2},\wi{3},\wi{4}, \z$ be the boundary conditions for the system $\S$. Consider the system $\S_i^L(\lam, \wi{1},\wi{2},\wi{3},\wi{4}, \z)$. Its $R$-vertex has two edges specified by the boundary condition, as in Figure~\ref{fig:train_recurrence}. 
    
    \begin{figure}[ht!]
        \begin{center}
            \input{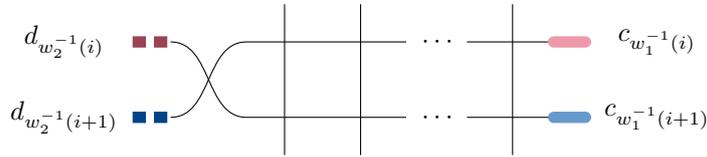}
        \end{center}
        \caption{Rows $i$ and $i+1$ of $\S_i^L$.}
        \label{fig:train_recurrence}
    \end{figure}
    There are two admissible choices for the remaining edges: either the dolors cross, or they do not. We can use this restriction on the $R$-vertex to decompose the partition function of $\S_i^L$ into two cases. We have that
    \begin{equation}\label{eq:recurrence_lhs}
    Z(\S_i^L) = \wt\left({\rvertexdolsametwo}\right) \, Z(\wi{1},\wi{2};\z)\, +\, \wt\left({\rvertexdolcrosstwo}\right) \, Z(\wi{1},s_i\wi{2};\z).
    \end{equation}
    
    Now consider $\S_i^R(\lam, \wi{1},\wi{2},\wi{3},\wi{4}, \z)$. We can similarly decompose its partition function based on the two admissible configurations of its $R$-vertex,
    \begin{equation}\label{eq:recurrence_rhs}
    Z(\S_i^R) = Z(\wi{1},\wi{2};s_i\z)\, \wt\left(\rvertexcolsametwo\right) \,+\,  Z(s_i\wi{1},\wi{2};s_i\z)\, \wt\left( \rvertexcolcrosstwo\right).
    \end{equation}
    By Lemma~\ref{lem:sir}, we have $Z(\S_i^L)= Z(\S_i^R)$. Thus we can say that the left hand sides of \eqref{eq:recurrence_lhs} and \eqref{eq:recurrence_rhs} are equal, giving a four-term recurrence relation for the partition function.
    Substituting the weights of the $R$-vertices into the recurrence, we are done.
\end{proof}

\subsection{Partition functions of monostatic systems}
To use the recurrence relation we proved in the previous subsection to compute partition functions, we will use knowledge of the partition function of systems with no states and systems with exactly one state as the base cases. Systems with no states of course have partition function equal to zero. We will compute the partition function of monostatic systems, which are systems with exactly one state. 

\begin{proposition}\label{prop:mono_pt_function}
	For a monostatic system $\Si{}$ satisfying the conditions of Proposition~\ref{lem:fourth_type}, its partition function is
	\[
	Z(\Si{}) =t^{s_d \,+\,s_c} \,\cdot\, \prod_{i=1}^r z_i^{N(r-1)+\lam_{j_i}},
	\]
	where $j_i = \wc(i)$, 
	\[
	s_c = \#\{k > j_i \,:\, \wi{3}(k) > \wi{1}(i), \wc^{-1}(k) > i\},
	\]
	and
	\[
	s_d =\#\{k < j_i \,:\, \wi{4}^{-1}\longelt(k) < \wi{2}^{-1}(i), \wdd^{-1}(k) > i\}.
	\]
\end{proposition}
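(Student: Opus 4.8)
The plan is to use that a monostatic system has a single state $s$, so that $Z(\Si{}) = \wt(s) = \prod_{\v \in s}\fwt(\v)$, and then to evaluate this product by tracking separately the exponent of each spectral parameter $z_i$ and the exponent of $t$. First I would record the shape of the unique state from the analysis behind Proposition~\ref{prop:mono_conditions} and Proposition~\ref{lem:fourth_type}: since $\wc=\wdd$ there are no vertices of the fourth type, and the pair belonging to row $i$ enters the top boundary in column $\lam_{j_i}$ with $j_i=\wc(i)$, travels straight down to row $i$, and there splits completely (its color leaves to the right, its dolor to the left). Consequently every splitting vertex is an unfused vertex of the third type with $n=0$, whose weight $\tfrac{1-t^{0+1}}{1-t}=1$ contributes neither a $z$ nor a $t$; and since $\lam$ is sufficiently dominant the columns $\lam_{j_i}$ are pairwise distinct, so at most one boson occupies any vertical edge in a given layer.

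Next I would compute the exponent of $z_i$ by working in row $i$ (spectral parameter $z_i$) and partitioning its $N$ vertices by position relative to $\lam_{j_i}$: the $\lam_{j_i}$ columns to its right, where the row's color passes horizontally; the single splitting column; and the $N-1-\lam_{j_i}$ columns to its left, where the row's dolor passes horizontally. Decomposing each fused vertex into its $rs$ unfused layers via \eqref{eq:fused_weights}, the key observation is that a factor of $z_i$ arises in a type-2 layer exactly when the horizontal color equals the layer color, and in a type-1 layer exactly when the layer color is $c_1$ and the horizontal dolor differs from the layer dolor; in both cases this is \emph{independent} of the occupation number $n$, so the passing vertical bosons never change the power of $z_i$. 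Hence (using $s=r$) a color-pass column contributes $z_i^{\,r}$, a dolor-pass column contributes $z_i^{\,r-1}$, and a direct count of the $z_i$-bearing layers flanking the splitting layer shows the splitting column contributes $z_i^{\,r-1}$. Summing gives $r\lam_{j_i}+(r-1)(N-1-\lam_{j_i})+(r-1)=N(r-1)+\lam_{j_i}$, as claimed.

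I would then compute the exponent of $t$. Because every splitting layer has $n=0$ and no fourth-type vertices occur, the only source of $t$ is the factor $t^{n}$ in a type-2 layer with $a<c$ or a type-1 layer with $x<d$, and each such factor records one horizontal particle crossing one occupied vertical layer. Distinctness of the columns $\lam_{j_i}$ forces every crossed vertical edge in row $i$ to carry exactly one boson — the pair sitting in a column $\lam_k$ whose splitting row $\wc^{-1}(k)=\wdd^{-1}(k)$ exceeds $i$ — so the total is a plain crossing count rather than a sum of occupation numbers. The right-moving color of row $i$ contributes a $t$ precisely when it crosses such a pair lying to its right ($k>j_i$, $\wc^{-1}(k)>i$) whose color is larger; summing over $i$ recovers $s_c$. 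The mirror-image count for the left-moving dolor of row $i$, comparing dolors through $\longelt$ and $\wi{4}$, recovers $s_d$. Combining the two exponents yields $Z(\Si{})=t^{\,s_c+s_d}\prod_{i=1}^r z_i^{\,N(r-1)+\lam_{j_i}}$.

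The main obstacle is bookkeeping rather than a conceptual difficulty, and it lives in two places. First, one must check that the splitting layer is flanked by exactly $r-1$ layers carrying a $z_i$ regardless of which boson $(c_\gamma,d_\delta)$ splits: in the monochrome ordering the type-1 $z$-layers to its left and the type-2 $z$-layers to its right number $(s-\delta)$ and $(\delta-1)$, and these must be shown to sum to $s-1$ for every $\gamma,\delta$. Second, one must translate the geometric crossing conditions into the precise index inequalities $\wi{3}(k)>\wi{1}(i)$ and $\wi{4}^{-1}\longelt(k)<\wi{2}^{-1}(i)$ defining $s_c$ and $s_d$; here the orientation of the columns (larger part $=$ leftward), the identification of the splitting row with $\wc^{-1}(k)$, and especially the sign of the dolor comparison induced by the convention that $(c,d)<(c',d')$ when $d>d'$ all have to be kept consistent. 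Getting these ordering conventions aligned is where the argument is most error-prone.
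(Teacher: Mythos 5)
Your proposal is correct and takes essentially the same approach as the paper's proof: both evaluate the weight of the unique state row by row, decompose each fused vertex into its unfused layers, obtain the $z_i$-exponent as $r\lam_{j_i}+(r-1)(N-\lam_{j_i}-1)+(r-1)=N(r-1)+\lam_{j_i}$ according to whether row $i$'s color passes, its dolor passes, or its pair splits, and obtain the $t$-exponent by counting crossings of the exiting color (resp.\ dolor) with still-occupied columns, giving $s_c$ (resp.\ $s_d$). The two bookkeeping points you flag are exactly where the paper's own argument is thinnest — it asserts the $r-1$ count at the splitting column without the $(s-\delta)+(\delta-1)$ computation, and its stated dolor inequality in $s_d$ (and the placement of inverses in $s_c$) sits uneasily with the conventions of Figure~\ref{fig:monochrome_weights} — so your caution about the ordering conventions is well placed, but it reflects no gap relative to what the paper itself does.
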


\begin{proof}
	Consider row $i$ of a monostatic state of the system. We consider the instances of each type of (monochrome) vertex in this row. By Proposition~\ref{lem:fourth_type}, no instances of the fourth type of vertex in the state at all, let alone in any given row. There is exactly one instance of the third type of monochrome vertex in the row, where the color $c_{\wi{1}^{-1}(i)}$ and dolor $d_{\wi{2}^{-1}(i)}$ which exit at the right and left of this row split off. This vertex appears in column $\lam_{j_i})$ where $j_i = \wc(i)$ (or $\wdd(i)$), as $\wc$ (and $\wdd$) locate the positions of $c_{\wi{1}^{-1}(i)}$ (and $d_{\wi{2}^{-1}(i)}$) on the top edge, and $\wc=\wdd$. This instance in row $i$ of the third type of monochrome vertex in Figure~\ref{fig:monochrome_weights} has weight 1 since our boundary conditions allow for only one instance of each color and dolor. The remaining (monochrome) vertices are of the first two types.
	
	First, we count factors of $z_i$ in the partition function. To the right of the fused vertex in column $\lam_{j_i}$, there is a factor of $z_i$ exactly when the color $c_{\wi{1}^{-1}(i)}$ matches the color of a monochrome vertex. There are $\lam_{j_i}$ fused vertices to the right of $\lam_{j_i}$ (labelled $\lam_{j_i}-1$ to $0$), and each contains $r$ monochrome vertices labelled with the color $c_{\wi{1}(i)}$ (for each pair with the $r$ dolors); this contributes a factor of $z_i^{r \lam_{j_i}}$. Similarly, when the dolor $d_{\wi{2}^{-1}(i)}$ crosses a fused vertex as in the first type in Figure~\ref{fig:monochrome_weights}, there is one factor of $z_i$ for each dolor different from $d_{\wi{2}(i)}$. To the left of the vertex in column $\lam_{j_i}$, there are $N-\lam_{j_i}-1$ fused vertices (labelled $N-1$ to $\lam_{j_i}+1$), and there are $r-1$ pairs of a dolor different from $d_{\wi{2}(i)}$ with the color $c_1$; this contributes a factor of $z_i^{(r-1)(N-\lam_{j_i}-1)}$. 
	
	Finally, consider the fused vertex in column $\lam_{j_i}$. Recall that each fused vertex is composed of blocks of monochrome vertices labelled by the same dolor. In each of the dolor blocks to the left of the block labelled $d_{\wi{2}^{-1}(i)}$, these are monochrome vertices of the first type in Figure~\ref{fig:monochrome_weights}, which as before contribute exactly one factor of $z_i$ for the pair with the color $c_1$. In each of the dolor blocks to the right of $d_{\wi{2}^{-1}(i)}$, these are the second type of monochrome vertex, and we again have one factor of $z_i$ per block when the colors agree with $c_{\wi{1}^{-1}(i)}$. In the block labelled by $d_{\wi{2}^{-1}(i)}$, note that it is impossible for a vertex of either type to contribute a factor of $z_i$: in the first type we are in the case where the dolor label of the vertex matches the dolor on the horizontal edge which always has weight 1, and for the second type the only column which contributes a factor $z_i$ is the one labeled $(c_{\wi{1}^{-1}(i)},d_{\wi{2}^{-1}(i)})$ which we have already seen is be the third type of vertex in Figure~\ref{fig:monochrome_weights}. Thus this column contributes another $z_i^{r-1}$ to the partition function. In total we have $z_i^{r\lam_{j_i}} \cdot z_i^{(N-\lam_{j_i}-1)(r-1)} \cdot z_i^{r-1} = z_i^{N(r-1)+\lam_{j_i}}$. 
	
	Finally, we count the factors of $t$. These come from crossings of occupied vertical edges with horizontal edges. Note that since we have exactly one instance of each color and dolor, each time there is a contributing crossing, this is one additional factor of $t$ in the partition function. On the dolor side, a contribution comes from a crossing of $d_{\wi{2}^{-1}(i)}$ with a pair $(c,d)$ with $d < d_{\wi{2}^{-1}(i)}$. The dolors to the left of $d_{\wi{2}^{-1}(i)} = d_{\wi{4}^{-1}w_0(j_i)}$ on the top edge are $d_{\wi{4}^{-1}w_0(k)}$ for indices $k < j_i$. There is a crossing when the dolor $d_{\wi{4}^{-1}w_0(k)}$ appears below $d_{\wi{2}^{-1}(i)}$ on the left boundary edge: this is the condition that $\wi{2}\wi{4}^{-1}w_0(k)= \wdd^{-1}(k) > i$. The sum $s_d$ counts exactly how many of these crossings occur.
	Similarly, on the colored horizontal edges, there is a contribution of $t$ to the partition function when there is a crossing with a color greater than $c_{\wi{1}^{-1}(i)}$. The colors to the right of $c_{\wi{1}^{-1}(i)}$ on the top boundary are $c_{\wi{3}^{-1}k)}$ for $k > j_i$, and there is a crossing when $c_{\wi{3}^{-1}(k)}$ appears below $c_{\wi{1}(i)}$ on the right edge. So we need both that $c_{\wi{3}^{-1}(k)} > c_{\wi{1}^{-1}(i)}$ and $\wi{1}\wi{3}^{-1}(k) = \wc^{-1}(k) > i$; these are counted by $s_c$.
\end{proof}

\subsection{The recurrence relation determines the partition function}
Finally we prove Theorem~\ref{thm:recurrence_determines_pt}. We have shown in Proposition~\ref{prop:pt_satisfies_recurrence} that the partition function satisfies the recurrence relation \eqref{eq:recurrence}. We first prove the following lemma.

\begin{lemma}\label{lem:recurrence_bruhat}
	Let $S \subset W\times W$ be the set of all pairs $(u,v)$ such that $u \not< v$ under the Bruhat order. Let $s$ be a simple reflection in $W$. Suppose that if $(us,vs), (u,vs)$ are in $S$ then $(u,v)$ is in $S$. Then $S = W\times W$. 
\end{lemma}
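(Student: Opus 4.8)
The plan is to prove $S = W\times W$ by induction on the Coxeter length $\ell(v)$ of the \emph{second} coordinate, reading the hypotheses as: $S$ contains every pair $(u,v)$ with $u\not< v$, and for each simple reflection $s$ the implication ``$(us,vs)\in S$ and $(u,vs)\in S$ imply $(u,v)\in S$'' holds. The feature that makes this work is that both pairs in the antecedent of the closure rule share the \emph{same} second coordinate $vs$; so if $s$ is chosen to be a right descent of $v$, both antecedent pairs are strictly smaller with respect to $\ell(\cdot)$ applied to the second slot, which is exactly the induction parameter.

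For the base case $\ell(v)=0$ we have $v=e$, the identity. Since $e$ is the minimum of the Bruhat order, $u\not< e$ holds for every $u\in W$, so every pair $(u,e)$ is one of the hypothesised base-case pairs and hence lies in $S$; these are in fact the only base cases the argument consumes. For the inductive step, fix $v$ with $\ell(v)\ge 1$ and assume $(w,v')\in S$ for every $w\in W$ and every $v'$ with $\ell(v')<\ell(v)$. Given any $u\in W$, choose a right descent $s$ of $v$ (which exists since $v\ne e$), so that $vs<v$ and $\ell(vs)=\ell(v)-1$. Both $(u,vs)$ and $(us,vs)$ then have second coordinate of length $\ell(v)-1$, so the inductive hypothesis places both in $S$. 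Applying the closure hypothesis for this $s$ yields $(u,v)\in S$, completing the induction and the proof.

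The only point requiring care is the choice of induction parameter: because the two antecedent pairs share the second coordinate $vs$, inducting on $\ell(v)$ alone lets a single right-descent move reduce both of them simultaneously, so that the possibly larger length of $us$ is irrelevant and no appeal to the lifting property of the Bruhat order is needed. For this reason I do not anticipate a substantive obstacle; the essential content is recognising that $\ell(v)$, rather than $\ell(v)-\ell(u)$ or the size of the interval $[u,v]$, is the parameter matched to the shape of the closure rule.
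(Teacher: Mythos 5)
Your proof is correct and matches the paper's argument essentially exactly: both induct on $v$ (the paper phrases the inductive hypothesis via the Bruhat order, you via length, which is immaterial here), use the base case $v=e$ where $u\not< e$ for all $u$, and in the inductive step pick a right descent $s$ of $v$ so that both antecedent pairs $(us,vs)$ and $(u,vs)$ fall under the inductive hypothesis. Your closing remark about why inducting on $\ell(v)$ alone suffices is a nice articulation of the point the paper leaves implicit.
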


\begin{proof}[Proof of Lemma~\ref{lem:recurrence_bruhat}]
	We induct on the length of $v$. First if $v = id$, then for any $u\in W$ the pair $(u,v)$ is in $S$ (because $u > v$). Now consider the pair $(u,v)$, and suppose that for $v' < v$ all pairs $(u',v')$ are in $S$. Let $s$ be a right descent of $v$, so that $vs < v$. Then $(us,vs)$ and $(u,vs)$ are in $S$, so $(u,v)$ is in $S$ too.
\end{proof}

\begin{proof}[Proof of Theorem~\ref{thm:recurrence_determines_pt}]
    Let $S$ be the subset of $\W\times \W$ consisting of pairs $(u,v)$ corresponding to $(\wdd(\Si{}), \wc(\Si{}))$ of systems $\Si{}$ whose partition functions are known. We will show that $S= \W\times \W$, which will prove the result.
    
    We have already exactly computed the partition functions of systems with no states and systems with one state in Corollary~\ref{cor:states_dont_exist} and Propositions~\ref{prop:mono_pt_function}, respectively. By Theorem~\ref{prop:mono_conditions}, the quantity of states of a system $\Si{}$, and therefore our ability to compute its partition function, is determined only by $\wc(\Si{})$ and $\wdd(\Si{})$. Therefore $S$ contains all pairs $u \not< v$.
    
    From the four terms of the recurrence relation in Theorem~\ref{thm:recurrence_determines_pt}, we can add additional relations to $S$. We can compute the partition function for any one of the four systems the recurrence if the remaining three systems have known partition functions. Consider the pairs $(\wdd(\Si{}), \wc(\Si{}))$ for each system $\Si{}$ in \eqref{eq:recurrence}: $(\wdd, \wc)$, $(\wdd s_i, \wc)$,  $(\wdd, \wc)$,  $(\wdd, \wc s_i)$, respectively. Therefore we may add the relations that if the first two pairs or second two pairs of the trio $\{(us, v),(u, v),(u, vs)\}$ are in $S$ then the third pair is in $S$. Sending $v \mapsto vs$, this is a condition for Lemma~\ref{lem:recurrence_bruhat}, completing the proof.
\end{proof}

\section{Relation to colored and uncolored bosonic models}\label{sec:llp}
In this section we show a \emph{color merging} process for the bicolored bosonic lattice models, which gives relations between bicolored models, colored models, and uncolored models. When there is exactly one dolor, the partition function of the model matches that of Bump and Naprienko's colored bosonic models \cite{BN} by considering the one dolor to be $+$ in their notation. When is more than one dolor, the color merging process we describe in the following section will give a procedure of ``merging'' the dolors to recover Bump and Naprienko's colored model from the bicolored model we defined in this paper.

We prove this \textit{color merging} process locally, showing that the weights of individual bicolored fused vertices lift to the colored weights. Remarkably, this local process has global implications. We show that our bicolored bosonic model's weights agree with the weights for a colored bosonic model when we forget the dolors; and that the weights agree with an uncolored bosonic model when we forget both the colors and dolors. Concretely, in Corollary~\ref{cor:llp_col_pt_fn}, we see how the partition function of a colored model is the sum of partition functions of bicolored models, where the sum is over possible boundary permutations of the dolors. In Corollary~\ref{cor:llp}, we sum over the boundary permutations of the colors to recover the uncolored models. These global results draw connections to known applications of the partition functions of colored and uncolored models to areas including $p$-adic representation theory and symmetric functions.

\subsection{Relation between bicolored and colored models}

We give a procedure to map the bicolored models to colored models. We define the maps on the spinsets of horizontal and vertical edges of the models $p_h: \Sigma_h^\text{bicol} \to \Sigma_h^\text{col}$ and $p_v: \Sigma_v^\text{bicol} \to \Sigma_v^\text{col}$ respectively. The map on horizontal spins is defined by 
\[
p_h(d_i) = +,\;\;\;\;\;\; p_h(c_i) = c_i.
\]
On vertical spins, $p_v$ sends a bicolored vertical edge with multiplicity of each color-dolor pair $(c,d)$ given by $n(c,d)$ to the colored edge 
\[
c_1^{m_1}c_2^{m_2}\cdots c_r^{m_r}, \;\;\;\; m_i = \sum_{d \in {\bf d}} n(c_i,d).
\]

The result is the following. The notation $\beta^\text{bicol}$ gives the Boltzmann weight of a bicolored fused vertex with monochrome weights from Figure~\ref{fig:monochrome_weights}, and $\beta^\text{col}$ gives the Boltzmann weight of the colored fused vertex with monochrome weights given by the $R$-weights in Bump and Naprienko \cite{BN}.

\begin{proposition}\label{prop:llp}
	Let $(A,B,C,D) \in \Sigma_h^\text{col} \times \Sigma_v^\text{col} \times \Sigma_h^\text{col} \times \Sigma_v^\text{col}$, and let $(b,c) \in \Sigma_v^\text{bicol} \times \Sigma_h^\text{bicol}$ such that $p_v(b) = B$ and $p_h(c) = C$. Then
	\[
		\beta^\text{col} \left(\gambig \right) = z^{1-r} \sum_{\substack{(a,d) \in \Sigma_h^\text{bicol} \times \Sigma_v^\text{bicol} \\ p_h(a) = A \\ p_v(d) =D}} \beta^\text{bicol}\left( \gamlittle \right).
	\]
\end{proposition}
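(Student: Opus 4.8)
The plan is to prove the identity locally, directly from the fusion definition of the Boltzmann weights. By Equation~\eqref{eq:fused_weights}, $\beta^\text{bicol}$ is a product of the unfused weights of Figure~\ref{fig:monochrome_weights} over the bosons $(c,d)$, arranged in the monochrome order of Definition~\ref{def:mono_ordering}; the colored weight $\beta^\text{col}$ of \cite{BN} is likewise a fusion of unfused colored vertices over the $r$ colors. Since the proposition concerns a single vertex, it suffices to fix the colored data $(A,B,C,D)$ (assumed admissible, for otherwise both sides vanish) together with the chosen lifts $b,c$, and to evaluate the right-hand sum explicitly.

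First I would use conservation to identify the genuine summation variables. Because colors and dolors are each conserved along the paths they trace, and because fixing $b$ pins down all color and dolor multiplicities on the north edge while $c$ pins down the east edge, the colors on every edge are forced by $(A,B,C,D)$ and the lifts $b,c$. The only real freedom in the sum over $(a,d)$ with $p_h(a)=A$ and $p_v(d)=D$ is therefore the distribution of dolors: which dolor, if any, exits on the west edge $a$, and how the dolors pair with the colored particles recorded by $D$ on the south edge $d$, subject to per-dolor-type conservation against the fixed inputs. This reduces the right-hand side to a sum over admissible dolor completions of a single colored configuration.

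The core of the proof is to evaluate this dolor sum. Reading the fused vertex dolor-block by dolor-block in the monochrome order, each block contributes unfused weights of the first three types in Figure~\ref{fig:monochrome_weights}; summing over the placement of a given dolor across the stack of bosons on the south edge collapses, via the geometric-series identity $\sum_{k=0}^{n} t^k = \tfrac{1-t^{n+1}}{1-t}$, to exactly the $t$-deformed stacking weight appearing in the colored vertex of \cite{BN}. The prefactor $z^{1-r}$ then reconciles the two $z$-conventions: a color traversing the bicolored fused vertex meets a copy of itself in each dolor-block and so accrues extra powers of $z$, overshooting the colored weight by a factor $z^{r-1}$ in every case. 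Carrying out this matching according to whether $A$ and $C$ are colors or $+$, and according to the relative flows on the vertical edges, yields the claimed equality.

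The main obstacle is the bookkeeping in this core step: tracking the exact powers of $t$ produced by crossings within and across dolor-blocks as the dolor assignment varies, and confirming that they assemble into the geometric series with the correct exponent, while simultaneously verifying that the net power of $z$ differs from the colored weight by precisely $r-1$ in each case. Matching conventions with the colored weights of \cite{BN}, so that the split-vertex and pass-through contributions line up, is the delicate part; once a clean case division by the types of the horizontal spins $A,C$ and the sign of the vertical flow is fixed, each case is a finite computation.
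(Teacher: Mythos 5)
Your proposal is correct in outline and shares the paper's core mechanism: a purely local verification in which the sum over dolor completions of a fixed colored configuration collapses, together with the observation that each redundant dolor block contributes exactly one factor of $z$, which accounts for the prefactor $z^{1-r}$. Where you diverge from the paper is in how the summation is organized, and this is precisely the point where the bookkeeping you flag as "the main obstacle" is tamed. The paper never attacks the full sum over all dolor completions at once. It first introduces an intermediate map $p'$ that merges every dolor into the maximal dolor $d_r$ while staying inside the bicolored model, so that $\beta^\text{col}(p(v))$ and $\beta^\text{bicol}(p'(v))$ differ exactly by the factor $z^{r-1}$ coming from the $r-1$ emptied dolor blocks; it then factors $p' = p_1'' \circ p_2'' \circ \cdots \circ p_{r-1}''$ as a composition of maps, each merging a single dolor $d_i$ into $d_{i+1}$. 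Weight preservation (in the summed sense) therefore only needs to be checked for one pairwise merge at a time, i.e.\ on the two adjacent dolor blocks for $d_i,d_{i+1}$, which is the finite casework of Figures~\ref{fig:llp1} and~\ref{fig:llp2}. Your single-shot sum instead ranges over all pairings of dolors with the color content of $D$ across all blocks simultaneously; this is the same sum, but evaluating it directly essentially forces you to rediscover the inductive structure, since the natural way to collapse it is one dolor at a time. Three smaller corrections to your plan. First, the collapsing identities are telescoping sums, e.g.
\begin{equation*}
zt^{m_1+\cdots+m_s} + z(1-t^{m_1})t^{m_2+\cdots+m_s} + z(1-t^{m_2})t^{m_3+\cdots+m_s} + \cdots + z(1-t^{m_s}) = z,
\end{equation*}
not applications of the geometric series $\sum_{k=0}^{n}t^k = \frac{1-t^{n+1}}{1-t}$; that expression is itself one of the unfused Boltzmann weights, not the mechanism by which the sum closes. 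Second, vertices of the fourth type in Figure~\ref{fig:monochrome_weights} do occur in the casework (whenever the east edge carries a dolor and a color arrives from the west), so restricting attention to "the first three types" is unjustified. Third, your explanation of the $z^{r-1}$ overshoot (a color meeting a copy of itself in each dolor block) covers only a traversing color; when the traversing particle is a dolor, the factor of $z$ in each extra block comes instead from the monochrome vertex labelled by the color $c_1$.
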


\begin{proof}
	Consider a model with $s$  colors and $r$ dolors.
	Instead of the maps $p_h,p_v$, we will consider the maps $p_h': \Sigma_h^\text{bicol} \to \Sigma_h^\text{bicol}$ and $p_v: \Sigma_v^\text{bicol} \to \Sigma_v^\text{bicol}$ which instead of sending colors to colors and dolors to $+$, will send all dolors to $d_r$ the maximal dolor. The map on horizontal spins is defined by 
	\[
	p_h(d_i) = d_r,\;\;\;\;\;\; p_h(c_i) = c_i.
	\]
	On vertical spins, $p_v'$ sends a bicolored vertical edge to the edge whose bosons of type $(c,d_r)$ have multiplicity $\sum_{d \in {\bf d}} n(c,d)$, and all other edges $(c,d_i)$ for $d_i \neq d_r$ having multiplicity 0. Specifically, sending an edge with bosons with multiplicities $n(c,d) > 0$ to the edge
	
	\[
	(c_1d_r)^{m_1}(c_2d_r)^{m_2}\cdots (c_sd_r)^{m_s}(c_1d_{r-1})^{0}\cdots (c_sd_1)^{0}, \;\;\;\; m_i = \sum_{d \in {\bf d}} n(c_i,d).
	\]
	
		\begin{figure}
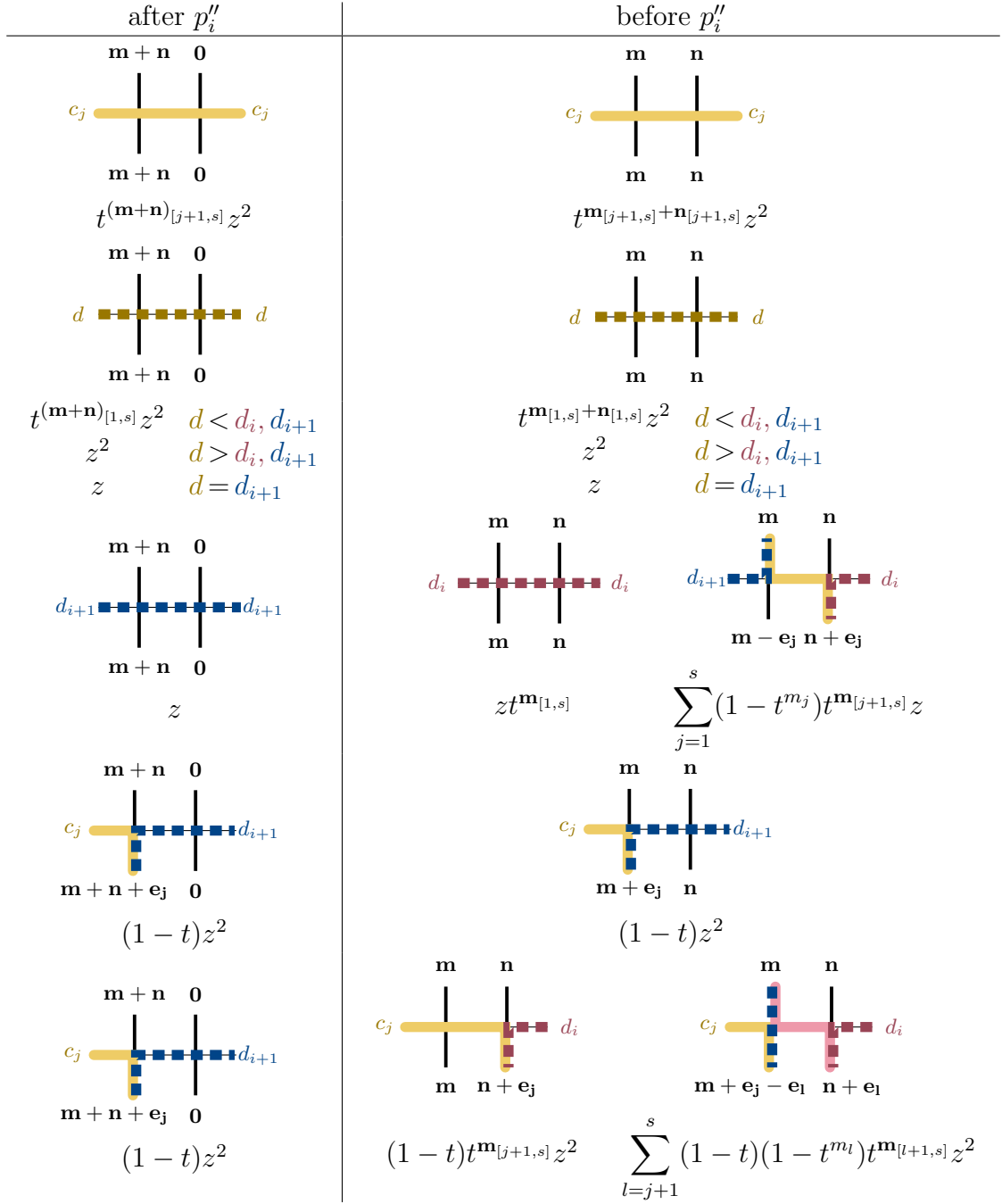

		\begin{center}
			\begin{tabular}{c|c}
				after $p_i''$
				&
				before $p_i''$ \\ \hline
				
				\input{figures/llp_lifted_cc}
				&
				\input{figures/llp_cc} \\
				$ t^{{\bf (m + n)}_{[j+1,s]}} z^2$ & $t^{{\bf m}_{[j+1,s]}+ {\bf n}_{[j+1,s]}} z^2$ \\
				
				\input{figures/llp_lifted_d_d}
				&
				\input{figures/llp_d_d} \\
				
				\begin{tabular}{cl}
					$t^{{\bf (m + n)}_{[1,s]}}z^2$ & $\color{cb-dark-yellow}{d} \, \color{black}{<}\, \color{cb-dark-red}{d_i}, \color{cb-dark-blue}{d_{i+1}}$ \\
					$z^2$ & $\color{cb-dark-yellow}{d}\, \color{black}{>}\, \color{cb-dark-red}{d_i}, \color{cb-dark-blue}{d_{i+1}}$ \\
					$z$ & $\color{cb-dark-yellow}{d}\, \color{black}{=}\, \color{cb-dark-blue}{d_{i+1}}$
				\end{tabular}
				&
				\begin{tabular}{cl}
					$t^{{\bf m}_{[1,s]}+ {\bf n}_{[1,s]} } z^2$ & $\color{cb-dark-yellow}{d} \, \color{black}{<}\, \color{cb-dark-red}{d_i}, \color{cb-dark-blue}{d_{i+1}}$ \\
					$z^2$ & $\color{cb-dark-yellow}{d}\, \color{black}{>}\, \color{cb-dark-red}{d_i}, \color{cb-dark-blue}{d_{i+1}}$ \\
					$z$ & $\color{cb-dark-yellow}{d}\, \color{black}{=}\, \color{cb-dark-blue}{d_{i+1}}$
				\end{tabular} \\
				
				\begin{tabular}{c}
					\input{figures/llp_lifted_dd} \\
					$z$
				\end{tabular}
				&
				\begin{tabular}{cc}
					\input{figures/llp_dd_1} & \input{figures/llp_dd_2} \\
					$z t^{{\bf m}_{[1,s]}}$ & $\displaystyle \sum_{j=1}^s (1-t^{m_j})t^{{\bf m}_{[j+1,s]}} z$
				\end{tabular} \\
				\begin{tabular}{c}
					\input{figures/llp_lifted_cd} \\
					$(1-t)z^2$
				\end{tabular}
				&
				\begin{tabular}{c}
					\input{figures/llp_cd_p1} \\
					$(1-t)z^2$
				\end{tabular} \\
				
				\begin{tabular}{c}
					\input{figures/llp_lifted_cd} \\
					$(1-t)z^2$ \\
				\end{tabular}
				&
				\begin{tabular}{cc}
					\input{figures/llp_cd_1} & \input{figures/llp_cd_2}\\
					$(1-t)t^{{\bf m}_{[j+1, s]}}z^2$ & 
					$\displaystyle \sum_{l = j+1}^s (1-t)(1-t^{m_l})t^{{\bf m}_{[l+1,s]}}z^2$
				\end{tabular} \\
			\end{tabular}
		\end{center}
		\caption{(Part I of II) We check casewise that eliminating the dolor $d_i$ by the map $p''$ does not change the weight of a fused vertex. The two columns represent blocks of monochrome vertices of colors $c_1,\cdots,c_s$ paired with the dolors $d_{i+1}$ (left) and $d_i$ (right). The vector ${\bf m}$ represents the number of bosons of type ${\bf c}d_{i+1}$, and similarly ${\bf n}$ for bosons of type ${\bf c}d_{i}$. We use the notation ${\bf m}_{[a,b]}$ as shorthand for the sum $\sum_{l = a}^b m_l$. In the cases when one image represents several of states (for example the bottom right corner), the weight listed below is the sum of the weights of all of the states.}
		\label{fig:llp1}
	\end{figure}
	
	\begin{figure}
		\begin{center}
			\begin{tabular}{c|c}
				after $p_i''$
				&
				before $p_i''$ \\ \hline
				
				\begin{tabular}{c}
					\input{figures/llp_lifted_dc} \\
					\small $\displaystyle \frac{(1-t^{m_j + n_j})}{1-t}t^{{\bf m+n}_{[j+1,s]}}z$
				\end{tabular}
				&
				\begin{tabular}{cc}
					\input{figures/llp_dc_1} & 	\input{figures/llp_dc_2} \\
					\small $\displaystyle \frac{(1-t^{m_j})}{1-t}t^{{\bf m+n}_{[j+1,s]}}z$ & 
					\small $\displaystyle \frac{(1-t^{n_j})}{1-t}t^{{\bf m}_{[1,s]}+{\bf n}_{[j+1,s]}}z$ 
				\end{tabular} \\
                &
                \begin{tabular}{c}
                    \input{figures/llp_dc_3} \\
                    \small $\displaystyle \sum_{l=1}^{j-1} \frac{(1-t^{m_l})(1-t^{n_j})}{1-t} t^{{\bf m + n}_{[j+1,s]}} z $
                \end{tabular}

			\end{tabular}
		\end{center}
		\caption{(Part II of II) We check casewise that eliminating the dolor $d_i$ by the map $p''$ does not change the weight of a fused vertex.  All definitions and notation is the same as describled for Figure~\ref{fig:llp1}.}
		\label{fig:llp2}
	\end{figure}
	
	Observe that the maps $p'$ and $p$ are related. In particular since the weights of the colored and bicolored model agree when there is only one dolor in the bicolored model, the only difference is that these extra monochrome edges carrying 0 bosons remaining to the right of $d_r$ under the map $p'$. The difference between the results of these maps is accounted for by the weight of these `extra' monochrome vertices that appear in the output of $p'$ but not $p$. Since all the `extra' monochrome vertices carry zero bosons, the contributions to the weight are exactly one factor $z$ per dolored block. So the weights of the images of a vertex under these maps are related by
	\[
	\beta^\text{col}\left(p\left(\gamlittle\right)\right) = \frac{1}{z^{r-1}}\,\beta^\text{bicol}\left(p'\left(\gamlittle\right)\right).
	\]
	
	Now we will show that the map $p'$ preserves the weight of a fused vertex by defining maps $p_i''$ which essentially send all of the bosons involving $d_i$ to $d_{i+1}$. Specifically, on horizontal spins
	\[
	p_{i,h}'': d_i \mapsto d_{i+1}
	\]
	and preserves all other spins,
	and on vertical spins
	\[
	p_{i,v}'': \begin{cases}
		n(c,d_{i+1}) \mapsto n(c,d_{i}) + n(c,d_{i+1}) \\
		n(c,d_{i}) \mapsto 0,
	\end{cases}
	\]
	and preserves all other multiplicities. 
	
	The point of this is to factor $p' = p_1'' \circ p_2'' \circ \cdots \circ p_{r-1}''$ as the composition of $r-1$ maps by successively eliminating dolors. We will show that each of these $p_i''$ maps preserve the weight of a dolored vertex, therefore the composition $p'$ preserves the weight. In particular, we will show the following color merging property for the maps $p_i''$. 
	Let $(A,B,C,D) \in \Sigma_h^\text{bicol} \times \Sigma_v^\text{bicol} \times \Sigma_h^\text{bicol} \times \Sigma_v^\text{bicol}$, and let $(b,c) \in \Sigma_v^\text{bicol} \times \Sigma_h^\text{bicol}$ such that $p_{i,v}''(b) = B$ and $p_{i,h}''(c) = C$. Then
	\[
	\beta^\text{bicol} \left(\gambig \right) = \sum_{\substack{(a,d) \in \Sigma_h^\text{bicol} \times \Sigma_v^\text{bicol} \\ p_{i,h}''(a) = A \\ p_{i,v}''(d) =D}} \beta^\text{bicol}\left( \gamlittle \right).
	\]
	
	We check this color merging property casewise in Figures~\ref{fig:llp1} and \ref{fig:llp2}. We will check that in the vertical columns corresponding to bosons including $d_i, d_{i+1}$ the weight is preserved before and after applying the map $p_i''$. We consider only these columns since the columns to the right and left of these blocks in a fused vertex are unchanged by the maps, so their contributions to the weight are preserved.
	
\end{proof}
	
	We include the following example to clarify how we check the color merging property in Figures~\ref{fig:llp1} and \ref{fig:llp2}. Consider the third row of Figure~\ref{fig:llp1}. The states in the ``before applying $p_i''$'' column are the following, where there are $m_k$ bosons of type $c_kd_{i+1}$ and $n_k$ bosons of type $c_kd_i$ in each monochrome column below. 
	
	\[
	\begin{array}{ccc}
		\input{figures/llp_dip1di_1} & \;\;\;\;\;\;\; & \raisebox{54pt}{$zt^{m_1 + m_2 + \cdots + m_s}$} \\
		\input{figures/llp_dip1di_2} && \raisebox{54pt}{$z(1-t^{m_1})t^{m_2 + m_3 + \cdots + m_s}$}  \\
		\input{figures/llp_dip1di_3} && \raisebox{54pt}{$z(1-t^{m_2})t^{m_3 + \cdots + m_s}$} \vspace{-20pt} \\
        \vdots &&  \vdots \vspace{5pt}\\
    \end{array}
	\]
    \[
    \begin{array}{ccc}
		\input{figures/llp_dip1di_4} && \raisebox{54pt}{\;\;\;\;\;\;\;\;\;\;\;\;\;\;\;\;\;\;\;\;\;\;\;$z(1-t^{m_s})$} \\
	\end{array}
	\]

	Taking the sum of the weights of these states, we have 
	\[
	zt^{m_1 + m_2 + \cdots + m_s} + z(1-t^{m_1})t^{m_2 + m_3 + \cdots + m_s} + z(1-t^{m_2})t^{m_3 + \cdots + m_s} + \cdots + z(1-t^{m_s}).
	\]
	Observing that the sum is telescoping, so it equals $z$. On the other hand, all of these states descend to the same state after applying $p_i''$
	\[
	\input{figures/llp_dip_ppp},
	 \]
	which has the same weight $z$.

\subsection{Global implications of color merging} In the previous subsection, we defined the maps $p_h$ and $p_v$ to send horizontal and vertical bicolored edges to edges of the colored model. We can define the map $p$ which sends a state $s$ of the bicolored model to $p(s)$ a state of the colored model. 

Let $\S(\lam,\wi{1},\wi{2},\wi{3},\wi{4},\z)$ be a system of the bicolored model. Define the system 
\[
\S^\text{bicol}_{\lam,\wi{4}}(\wi{1},\wi{3},\z) = \bigsqcup_{w_2} \S(\lam,\wi{1},\wi{2},\wi{3},\wi{4},\z)
\]
to be the set of states of the bicolored model with arbitrary left boundary condition. Let $\S^\text{col}_{\lam}(\wi{1},\wi{3},\z)$ be the system of the colored model as described in \cite{BN} with the specified boundary conditions, having $\wi{1}$ determine the ordering of the colors on the right boundary and $\wi{3}$ the ordring on the top boundary.

\begin{proposition}\label{prop:llp_global}
    Let $s_0$ be a state of the colored model $\S^\text{col}_\lam(\wi{1},\wi{3},\z)$. Let $\wi{4} \in S_r$ be a permutation. Then
    \[
    \beta^\text{col} (s_0) = \z^{N(1-r)}\sum_{\substack{s\in \S^\text{bicol}_{\lam,\wi{4}}(\wi{1},\wi{3},\z) \\ p(s) = s_0}} \beta^\text{bicol}(s),
    \]
    where the sum is over the collection of states with arbitrary left boundary condition and top boundary defined by $\wi{4}$ which lift from $s_0$.
\end{proposition}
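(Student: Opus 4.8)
The plan is to deduce the global identity from the local color-merging statement of Proposition~\ref{prop:llp} by a vertex-by-vertex telescoping. Recall that $\beta^\text{bicol}(s) = \prod_{\v \in s}\beta^\text{bicol}(\v)$ and $\beta^\text{col}(s_0) = \prod_{\v_0 \in s_0}\beta^\text{col}(\v_0)$, and that the projection $p$ is defined edge-locally through $p_h,p_v$; hence a bicolored state $s$ satisfies $p(s) = s_0$ if and only if the bicolored spin on every edge projects to the corresponding colored spin of $s_0$. The sum $\sum_{p(s) = s_0}\beta^\text{bicol}(s)$ is therefore a sum over all admissible bicolored spin-assignments lying over the fixed colored spins of $s_0$, weighted by the product of vertex weights (inadmissible local configurations contribute $0$).

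First I would fix the bookkeeping of which edges are summed. In Proposition~\ref{prop:llp} the top and right spins $B,C$ are held fixed while the left and bottom spins $A,D$ are summed over their fibers. Orienting the grid so that ``top'' and ``right'' are inputs, every internal horizontal edge is the left edge of exactly one vertex and every internal vertical edge is the bottom edge of exactly one vertex; the left-boundary edges and the (empty) bottom-boundary edges are likewise ``output'' edges, while the top boundary (fixed by $\wi{3},\wi{4},\lam$) and the right boundary (fixed by $\wi{1}$) are the only ``input'' edges and carry no summation. The left-boundary summation ranges over all admissible dolor orderings, which is precisely the disjoint union over $\wi{2}$ in the definition of $\S^\text{bicol}_{\lam,\wi{4}}(\wi{1},\wi{3},\z)$; the bottom-boundary summation is trivial, since only the empty spin projects to the empty colored spin. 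I would then peel vertices off the grid starting from the bottom-left corner, removing a vertex only once its left and lower neighbors have already been removed (equivalently, sweeping up anti-diagonals from that corner). At the moment a vertex $\v$ is peeled, its left and bottom edges are free and may be summed, while its top and right edges are already-present ``input'' edges.

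The key step, and the main obstacle, is to justify pulling each local sum out of the global sum. When $\v$ is peeled, Proposition~\ref{prop:llp} gives $\sum_{a,d}\beta^\text{bicol}(\v) = z_\v^{\,r-1}\,\beta^\text{col}(\v_0)$, where $\v_0 = p(\v)$ and $z_\v$ is the row parameter of $\v$. Crucially this value depends only on the \emph{colored} spins $B,C$ on the top and right edges of $\v$, which are fixed by $s_0$, and not on the particular bicolored lifts of those edges; those lifts are themselves summation variables that will be consumed only when the neighboring vertices above and to the right of $\v$ are peeled. Hence $z_\v^{\,r-1}\beta^\text{col}(\v_0)$ is a genuine constant relative to the remaining summation and factors out. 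Iterating over all $rN$ vertices yields
\[
\sum_{\substack{s\in \S^\text{bicol}_{\lam,\wi{4}}(\wi{1},\wi{3},\z)\\ p(s)=s_0}}\beta^\text{bicol}(s) = \prod_{\v} z_\v^{\,r-1}\,\beta^\text{col}(\v_0) = \z^{N(r-1)}\,\beta^\text{col}(s_0),
\]
where the last equality uses that each of the $N$ vertices in row $i$ contributes $z_i^{\,r-1}$, so $\prod_{\v} z_\v^{\,r-1} = \prod_{i=1}^r z_i^{N(r-1)} = \z^{N(r-1)}$. Rearranging gives the claimed identity. The two points needing care are the independence of each local contribution from the bicolored values of its input edges (which licenses the factorization) and the verification that the global index set of lifts matches the summations peeled off vertex by vertex, in particular that the left-boundary choices reproduce exactly $\bigsqcup_{\wi{2}}\S(\lam,\wi{1},\wi{2},\wi{3},\wi{4},\z)$.
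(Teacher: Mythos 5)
Your proof is correct and takes essentially the same approach as the paper's own proof (which follows Proposition~5.2 of \cite{BN} and Lemma~8.5 of \cite{BBBG}): both reduce the identity to a vertex-by-vertex application of Proposition~\ref{prop:llp}, the key point in each case being that the local sum over lifts of the left and bottom spins is independent of which bicolored lifts sit on the top and right (input) edges. Your bottom-left-to-top-right peeling is simply the summation-side reading of the paper's row-by-row, right-to-left algorithmic construction of liftings, and your bookkeeping $\prod_{\v} z_{\v}^{\,r-1} = \z^{N(r-1)}$ matches the stated normalization.
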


\begin{proof}
    We follow the proofs of Proposition~5.2 of \cite{BN} and, more formally, Lemma~8.5 of \cite{BBBG}. We may construct all the liftings $s$ of the state $s_0$ to $\S^\text{bicol}_{\lam,\wi{4}}(\wi{1},\wi{3},\z)$ algorithmically by assigning colored spins to the edges left and below each vertex, visiting the vertices in order row by row from right to left. At the vertex under consideration at any given step in the algorithm, if the spins on the edges are labelled $a,b,c,d$ as in Proposition~\ref{prop:llp}, the vertices $b,c$ are known, while $a,d$ must satisfy $p_h(a) = A$ and $p_v(d)= D$, where $A,D$ are the corresponding spins of $s_0$. The result then follows from Proposition~\ref{prop:llp}. 
\end{proof}

Now define the scaled partition function of a bicolored system $\mathfrak{S}$ to be $\Zbar(\mathfrak{S}) = \z^{N(1-r)}Z(\mathfrak{S})$, where $N$ is the number of columns and $r$ is the number of dolors. The function $\Zbar$ is a polynomial based on Propositions~\ref{prop:llp} and \ref{prop:llp_global}, since the weights of states of the colored model are polynomial.

\begin{corollary}\label{cor:llp_col_pt_fn}
    Let $\wi{4}\in S_r$. Then
    \[
        Z(\S^\text{col}_\lam(\wi{1},\wi{3},\z)) = \sum_{\wi{2}} \Zbar(\S(\lam,\wi{1},\wi{2},\wi{3},\wi{4},\z)).
    \]
\end{corollary}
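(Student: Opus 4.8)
The plan is to sum the local-to-global identity of Proposition~\ref{prop:llp_global} over all states of the colored system. Concretely, I would begin by expanding the colored partition function as a sum over its states,
\[
Z(\S^\text{col}_\lam(\wi{1},\wi{3},\z)) = \sum_{s_0 \in \S^\text{col}_\lam(\wi{1},\wi{3},\z)} \beta^\text{col}(s_0),
\]
and then substitute, for the fixed but arbitrary $\wi{4}$ appearing on the right-hand side of the corollary, the expression for $\beta^\text{col}(s_0)$ furnished by Proposition~\ref{prop:llp_global}. This replaces each colored Boltzmann weight by $\z^{N(1-r)}$ times the sum of bicolored weights $\beta^\text{bicol}(s)$ over the fiber $p^{-1}(s_0)$ inside $\S^\text{bicol}_{\lam,\wi{4}}(\wi{1},\wi{3},\z)$.

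Next I would observe that the map $p$ carries every state of $\S^\text{bicol}_{\lam,\wi{4}}(\wi{1},\wi{3},\z)$ to a state of $\S^\text{col}_\lam(\wi{1},\wi{3},\z)$: this is the boundary compatibility built into the definitions of $p_h$ and $p_v$, where the right boundary colors are governed by $\wi{1}$, the top boundary colors by $\wi{3}$ and $\lam$, and crucially the colored left boundary (all dolors merging to $+$) does not depend on $\wi{2}$. Hence, as $s_0$ ranges over the colored states, the fibers $p^{-1}(s_0)$ are disjoint and exhaust $\S^\text{bicol}_{\lam,\wi{4}}(\wi{1},\wi{3},\z)$, so they partition it. This lets me collapse the double sum (over $s_0$, then over each fiber) into a single sum over all bicolored states,
\[
Z(\S^\text{col}_\lam(\wi{1},\wi{3},\z)) = \z^{N(1-r)} \sum_{s \in \S^\text{bicol}_{\lam,\wi{4}}(\wi{1},\wi{3},\z)} \beta^\text{bicol}(s).
\]

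Finally I would unfold the definition $\S^\text{bicol}_{\lam,\wi{4}}(\wi{1},\wi{3},\z) = \bigsqcup_{\wi{2}} \S(\lam,\wi{1},\wi{2},\wi{3},\wi{4},\z)$ to split the sum according to the left boundary permutation $\wi{2}$, recognize each inner sum as $Z(\S(\lam,\wi{1},\wi{2},\wi{3},\wi{4},\z))$, and absorb the prefactor $\z^{N(1-r)}$ into the scaled partition function $\Zbar$, yielding the claimed identity.

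I expect the only point needing genuine care—rather than a true obstacle—to be the verification that $p$ maps the \emph{entire} disjoint union into the single colored system $\S^\text{col}_\lam(\wi{1},\wi{3},\z)$ independently of $\wi{2}$, so that the fibers cover the domain cleanly; everything else is bookkeeping layered on top of Proposition~\ref{prop:llp_global}. I would also remark that, since the left-hand side is manifestly independent of $\wi{4}$, the identity incidentally shows that $\sum_{\wi{2}} \Zbar(\S(\lam,\wi{1},\wi{2},\wi{3},\wi{4},\z))$ does not depend on the choice of $\wi{4}$, a consequence worth noting but not requiring separate argument.
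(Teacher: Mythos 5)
Your proposal is correct and is essentially the paper's own argument: the paper proves this corollary in one line by summing Proposition~\ref{prop:llp_global} over all states of $\S^\text{col}_\lam(\wi{1},\wi{3},\z)$, which is exactly your expansion, fiber-collapse, and unfolding of the disjoint union over $\wi{2}$. Your added care about $p$ mapping the whole bicolored ensemble into the colored system (so the fibers partition it) is the right point to check and is implicit in the paper's proof.
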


\begin{proof}
    This follows from Proposition~\ref{prop:llp_global} by summing over all states of $\S^\text{col}_\lam(\wi{1},\wi{3},\z).$
\end{proof}

\begin{corollary} \label{cor:llp}
    Let $\wi{3}, \wi{4}\in S_r$. Defining the uncolored models $\S^\text{uncol}_\lam(\z)$ as in Section 5 of \cite{BN},
    \[
        Z(\S^\text{uncol}_{\lam}(\z)) = \sum_{\wi{1}} \sum_{\wi{2}} \Zbar(\S(\lam,\wi{1},\wi{2},\wi{3},\wi{4},\z)).
    \]
\end{corollary}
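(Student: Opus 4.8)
The plan is to obtain Corollary~\ref{cor:llp} by chaining the bicolored-to-colored merging already established in Corollary~\ref{cor:llp_col_pt_fn} with the colored-to-uncolored merging of Bump and Naprienko. Since the right-hand side sums only over $\wi{1}$ and $\wi{2}$ yet must recover $Z(\S^\text{uncol}_\lam(\z))$, which depends on neither $\wi{3}$ nor $\wi{4}$, the two merging steps should be arranged so that all $\wi{4}$-dependence is absorbed at the first (dolor-merging) step and all $\wi{3}$-dependence at the second (color-merging) step. This parallels the symmetry between summing over $\wi{2}$ with $\wi{4}$ fixed in the dolor case and summing over $\wi{1}$ with $\wi{3}$ fixed in the color case.

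First I would invoke the colored-to-uncolored merging of \cite{BN} (Section~5), which we may state as
\[
    Z(\S^\text{uncol}_\lam(\z)) = \sum_{\wi{1}} Z(\S^\text{col}_\lam(\wi{1},\wi{3},\z))
\]
for any fixed top-boundary permutation $\wi{3}$. This is the global form of color-blindness: forgetting the colors of a state of $\S^\text{col}_\lam(\wi{1},\wi{3},\z)$ and summing the colored Boltzmann weights over all liftings, equivalently over all right-boundary orderings $\wi{1}$, reproduces the uncolored weight. Unlike the dolor-merging of Proposition~\ref{prop:llp}, this step is weight-preserving, so no power-of-$\z$ normalization is introduced; this is precisely why the right-hand side of the corollary carries no scaling beyond the $\Zbar$ already present.

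Next I would substitute Corollary~\ref{cor:llp_col_pt_fn} into each summand. For a fixed $\wi{4}$, that corollary gives $Z(\S^\text{col}_\lam(\wi{1},\wi{3},\z)) = \sum_{\wi{2}} \Zbar(\S(\lam,\wi{1},\wi{2},\wi{3},\wi{4},\z))$, the right-hand side being independent of $\wi{4}$. Combining the two displays yields
\[
    Z(\S^\text{uncol}_\lam(\z)) = \sum_{\wi{1}} \sum_{\wi{2}} \Zbar(\S(\lam,\wi{1},\wi{2},\wi{3},\wi{4},\z)),
\]
which is the claimed identity; the manifest $\wi{3},\wi{4}$-independence of the left-hand side confirms a posteriori that the double sum does not depend on $\wi{3}$ or $\wi{4}$.

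The main obstacle is to pin down the colored-to-uncolored merging of \cite{BN} in exactly the form above, namely that one sums over the right-boundary ordering $\wi{1}$ with the top ordering $\wi{3}$ held fixed, and that this merging is genuinely weight-preserving so that no extra $\z$-scaling appears. If \cite{BN} records the result with the roles of the two boundaries exchanged, a short symmetry argument (reflecting the lattice, or invoking invariance of the uncolored partition function under permuting the top boundary) would convert it to the form required here. A cleaner, fully self-contained alternative would be to mirror the proof of Proposition~\ref{prop:llp_global}: define a color-forgetting map on states, verify the corresponding local weight identity for colored vertices, and then run the same row-by-row lifting algorithm, so that the sum over the fibers is exactly the sum over $\wi{1}$.
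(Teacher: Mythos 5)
Your proposal matches the paper's proof exactly: the paper derives Corollary~\ref{cor:llp} by combining the colored-to-uncolored merging of Corollary~5.3 of \cite{BN} (summation over $\wi{1}$ with $\wi{3}$ fixed) with Corollary~\ref{cor:llp_col_pt_fn} (summation over $\wi{2}$), precisely the two-step chaining you describe. Your additional remarks on $\wi{3},\wi{4}$-independence and the absence of extra $\z$-scaling are sensible elaborations but not needed beyond the citation.
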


\begin{proof}
    This follows from Corollary 5.3 of \cite{BN} and Corollary~\ref{cor:llp_col_pt_fn}.
\end{proof}

\section{Correspondence with Gelfand-Tsetlin patterns}\label{sec:gt_patterns}

Solvable lattice models correspond to many other families of combinatorial objects. The states of certain uncolored models, for example, correspond to Gelfand-Tsetlin patterns, semi-standard Young tableaux, alternating sign matrices \cite{Ku}, among many other objects. In this section, we will show that the bicolored bosonic vertex models in this paper correspond to certain 2-colored Gelfand-Tsetlin patterns, which we precisely define later. The main result is stated below, and an illustrative example is shown in Figure~\ref{fig:gt_bijection_example}.

\begin{theorem}\label{prop:gt_bijection}
	States of the lattice model and 2-colored Gelfand-Tsetlin patterns are in bijection.
\end{theorem}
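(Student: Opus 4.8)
The plan is to construct mutually inverse maps by reading off the \emph{horizontal cuts} of a state from top to bottom. Fix a state $s$ of a system $\S(\lam,\wi{1},\wi{2},\wi{3},\wi{4},\z)$ on the $\numrows\times\numcols$ grid. For each $k\in\{0,1,\dots,\numrows\}$ consider the horizontal line separating row $k$ from row $k+1$ (with $k=0$ the top boundary and $k=\numrows$ the bottom). This line crosses $\numcols$ vertical edges, each carrying an element of $\Sigma_v^{\mathrm{fus}}=\N^{rs}$, i.e.\ a multiplicity of bosons of each type $(c,d)$. From this data I would record two weakly decreasing sequences of column labels: one listing, with multiplicity, the columns occupied by each \emph{color}, and one listing the columns occupied by each \emph{dolor}. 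Stacking these sequences for $k=0,\dots,\numrows$ produces two triangular arrays, and the claim is that together with the label information (which $c_i$ and which $d_j$ sits at each position) they constitute a 2-colored Gelfand--Tsetlin pattern.

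The interlacing conditions should follow directly from admissibility. The admissible unfused vertices of Figure~\ref{fig:monochrome_weights} constrain path motion between consecutive cuts: a color continues straight down or turns right, so (since columns decrease left to right) its column label weakly decreases from cut $k-1$ to cut $k$; dually, a dolor continues down or turns left, so its column label weakly increases. These monotonicities are precisely the interlacing inequalities between consecutive rows of the color array and of the dolor array (in opposite orientations), so each array separately is a bosonic Gelfand--Tsetlin pattern, the weak decrease accommodating the multiplicities allowed by the bosonic vertical spins.

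The two arrays are coupled because, on vertical edges, colors and dolors travel strictly in pairs. Hence at every cut the multiset of occupied columns recorded by the color array coincides with that of the dolor array; this shared support is what fuses the two patterns into a single 2-colored object, the two colorings recording the color-name and dolor-name in each cell. The top row is pinned by $\lam$ together with $\wi{3},\wi{4},\longelt$ as in Definition~\ref{def:boundary_conditions}, while the empty bottom boundary forces the pattern to terminate correctly, so $\Phi(s)$ lies in the prescribed class of patterns.

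For the inverse, given a 2-colored Gelfand--Tsetlin pattern I would reconstruct the state cut by cut: the interlacing inequalities guarantee that between two consecutive rows each color is routed by a unique down-then-right segment and each dolor by a unique down-then-left segment, and the shared-support condition guarantees these are realized by admissible (fused) vertices. It then remains to verify that $\Phi$ and this reconstruction are mutually inverse and that the resulting vertex configurations are exactly the admissible ones. I expect the main obstacle to be matching the read-off data \emph{exactly} to the defining conditions of a 2-colored pattern: showing the forward map is surjective onto valid patterns (no hidden constraint is imposed and none is omitted) and that the multiplicity and label bookkeeping coming from the bosonic vertical spins corresponds bijectively to the two colorings. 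The delicate point is the coupling of colors and dolors moving in opposite horizontal directions while paired on vertical edges, which must be tracked carefully to ensure both the surjectivity and the injectivity of the correspondence.
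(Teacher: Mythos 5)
Your construction follows the same route as the paper's proof: both read off the bosons on the vertical edges at each horizontal cut (the paper records, above lattice row $i$, the column numbers, colors, and dolors of the bosons as pattern row $r-i+1$), both use the fact that bosons travel as color--dolor pairs on vertical edges so that your two triangular arrays share a single underlying number array, and both invert by showing the horizontal edges of each row admit a unique admissible filling. The genuine gap is that your proposal stops exactly where the substance of the proof lies: you verify (in spirit) the interlacing and axioms (1)--(2), but the remaining defining conditions of a 2-colored Gelfand--Tsetlin pattern are never checked, and you explicitly defer them as the ``main obstacle.'' Concretely, axiom (4) does not follow from path monotonicity at all --- it comes from the monochrome ordering of bosons within a fused vertex (Definition~\ref{def:mono_ordering}), which dictates the order in which coincident bosons in the same column may appear; axiom (5) comes from the list of admissible fused vertices; and axiom (6) encodes that the single color and single dolor exiting a given row do not cross, which holds because a horizontal edge carries exactly one particle. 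These are not routine bookkeeping: axioms (3) and (6) are precisely what guarantee, in the inverse direction, that exactly one color and one dolor exit each row and that the routing of horizontal edges is unique and admissible, so without them neither surjectivity of the forward map nor well-definedness of the inverse is established.

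A smaller but real issue is your claim that the monotonicities ``are precisely the interlacing inequalities.'' Weak monotonicity of each individual particle's column (colors weakly rightward, dolors weakly leftward) yields only one of the two inequalities in each interlacing condition; the other comes from the single-occupancy of horizontal edges, which prevents the horizontal stretches of two particles in the same row from overlapping --- or, as the paper argues more economically, from the color merging property of Section~\ref{sec:llp} combined with the known correspondence between uncolored bosonic models and ordinary Gelfand--Tsetlin patterns, which immediately gives that $\Lambda$ interlaces. In summary: correct plan, same decomposition as the paper, but the axiom-by-axiom verification that makes the read-off data \emph{exactly} a 2-colored pattern, and the uniqueness of the reconstructed state, are asserted rather than proven.
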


\begin{figure}[!htbp]
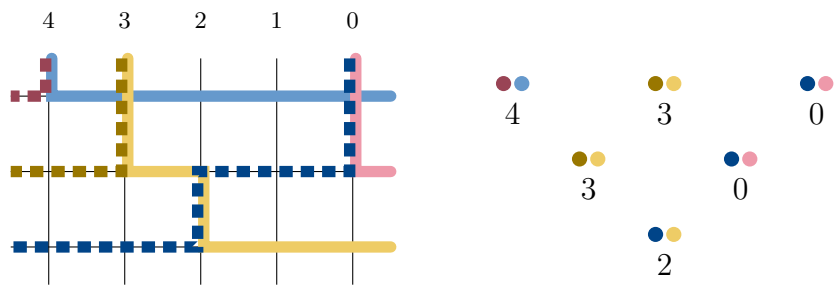

	\[
	\begin{array}{ccc}
		\input{figures/cdgtexlatticeone} &\;\;\;\;\; &
		\input{figures/cdgtexone}
	\end{array}
	\]
	\caption{A lattice model state and a 2-colored Gelfand-Tsetlin pattern hold the same information.}
	\label{fig:gt_bijection_example}
\end{figure}

\subsection{2-colored Gelfand-Tsetlin patterns}
A \emph{Gelfand-Tsetlin pattern} is a triangular array of numbers 
\[
\input{figures/gtgeneric},
\]
such that each row is a partition and the rows interleave.

We define a \emph{2-colored Gelfand-Tsetlin pattern} $G= (\Lambda,C,D)$ as a triangular array where each entry has 3 pieces of information: a number $\lam^{(i)}_j$, and two colors $c^{(i)}_j$ and $d^{(i)}_j$. The numbers $\lam^{(i)}_j$ form a Gelfand-Tsetlin pattern. The colors $c^{(i)}_j$ and $d^{(i)}_j$ are chosen from the sets $\c = \{c_1,c_2,\dots, c_n\}$ and $\d = \{d_1,d_2,\dots, d_m\}$ respectively according to the following axioms. 

\begin{enumerate}
	\item \(
	\displaystyle c^{(i-1)}_j \in \begin{cases}
		\{c^{(i)}_1, \dots, c^{(i)}_j\} & \text{if }\lam^{(i-1)}_j > \lam^{(i)}_{j+1} \\
		\{c^{(i)}_1, \dots, c^{(i)}_j,c^{(i)}_{j+1} \} & \text{if }\lam^{(i-1)}_j = \lam^{(i)}_{j+1} \\
	\end{cases}
	\)
	\item \(
	\displaystyle d^{(i-1)}_j \in \begin{cases}
		\{d^{(i)}_j, d^{(i)}_{j+1}, \dots, d^{(i)}_i\} & \text{if }\lam^{(i)}_j = \lam^{(i-1)}_{j} \\
		\{d^{(i)}_{j+1}, \dots, d^{(i)}_i \} & \text{if }\lam^{(i)}_j > \lam^{(i-1)}_{j} \\
	\end{cases}
	\)
	\item For each array $C,D$, a color in row $i$ appears at most as many times as it appears in row $i+1$.
	\item If $\lam^{(i)}_j = \lam^{(i)}_{j+1}$ then $(c^{(i)}_j, d^{(i)}_j) \leq (c^{(i)}_{j+1}, d^{(i)}_{j+1})$.
	\item If $\lam^{(i-1)}_j = \lam^{(i)}_{j+1}$ then $(c^{(i-1)}_j,d^{(i-1)}_j) \leq (c^{(i)}_{j+1},d^{(i)}_{j+1})$, and if $\lam^{(i)}_j = \lam^{(i-1)}_{j}$ then $(c^{(i)}_j,d^{(i)}_j) \leq (c^{(i-1)}_{j},d^{(i-1)}_{j})$.
	\item If $c^{(i)}_k$ appears in row $i$ but not in row $i-1$ of $C$, and respectively $d^{(i)}_l$ appears in row $i$ but not in row $i-1$ of $D$, then $l \leq k$.
\end{enumerate}

An example of a 2-colored Gelfand-Tsetlin pattern is shown on the right hand side of Figure~\ref{fig:gt_bijection_example}. In the figure, the colors are shown above their numbered entry, with those labelled by $C$ and $D$ to the right and left, respectively.

\subsection{Correspondence with the lattice model} 
In this section we show that 2-colored Gelfand-Tsetlin patterns correspond to states of the lattice model, proving Theorem~\ref{prop:gt_bijection}.

From a state of the lattice model we may construct a pattern $G = (\Lambda,C,D)$ as follows. Consider the bosons occupying the vertical edges just above row $i$ of the lattice model. Define the entries of row $\lambda^{(r-i+1)}$, $c^{(r-i+1)}$, $d^{(r-i+1)}$ as the column numbers, colors, and  dolors respectively associated to the bosons on these vertical edges above row $i$ as read from left to right. Now we argue that $G$ is a 2-colored Gelfand-Tsetlin pattern. By the color merging property and results that uncolored bosonic models correspond to Gelfand-Tsetlin patterns, $\Lambda$ constitutes a Gelfand-Tsetlin pattern. That the colors only may only travel downward and to the right shows that axiom (1) for 2-colored GT patterns is satisfied by $C$, and similarly that dolors travel down and to the left satisfies axiom (2). Axiom (3) is also clear by the interpretation that colors and dolors may be thought of as particles travelling along paths, and notably particles do not enter the model except at the top boundary. The monochrome ordering of bosons means that $G$ satisfies axiom (4), as two different bosons appearing in the same column must appear in the specified order. Axiom (5) is a consequence of the admissible types of fused vertices. Finally axiom (6) follows from the fact that a color and a dolor cannot both occupy the same horizontal edges. 

We construct a state of the lattice model in the corresponding way. A 2-colored Gelfand-Tsetlin pattern $G$ specifies the bosons occupying vertical edges, and  we will show that there is a unique way to fill in the horizontal edges which gives a state of the lattice model. In particular, we will show that every horizontal edge in a row of the lattice model must be occupied by exactly one color or dolor. Consider row $i$ of the lattice model. The coloring of the vertical edges above and below row $i$ are specified by rows $r-i+1$ and $r-i$ of $G$. We fill in the horizontal edges of row $i$ to connect the vertical lines for each color and dolor. Axiom (3) and the triangular shape of $G$ ensures that exactly one color and one dolor exit at row $i$ of the lattice model, and axiom (6) ensures that these particles do not cross as the color exits to the right and the dolor exits to the left. Axiom (1) ensures that the remaining colors travel to the right in the lattice model and axiom (2) ensures that dolors travel to the left. Finally, axioms (4) and (5) ensure that each fused vertex is admissible.

\end{document}